\numberwithin{equation}{section}
\newtheorem{claim}[equation]{Claim}
\newtheorem{theorem}[equation]{Theorem}
\theoremstyle{definition}
\newtheorem{rem}[equation]{Remark}
\newtheorem*{acknowledge}{Acknowledgments}
\def\IN{\mathbb N}
\def\IR{\mathbb R}
\def\eps{\varepsilon}
\newcommand{\mult}{\operatorname{mult}}
\newcommand{\supp}{\operatorname{supp}}
\begin{document}
\title[Free boundary minimal surfaces in {E}uclidean balls]{Free boundary minimal surfaces of any topological type in {E}uclidean balls via shape optimization}

\author{Henrik Matthiesen}
\address{Henrik Matthiesen: Department of Mathematics, University of Chicago,
5734 S. University Ave, Chicago, Illinois 60637}
\email{hmatthiesen@math.uchicago.edu}

\author{Romain Petrides}
\address{Romain Petrides, Universit\'e de Paris, Institut de Math\'ematiques de Jussieu - Paris Rive Gauche, b\^atiment Sophie Germain, 75205 PARIS Cedex 13, France}
\email{romain.petrides@imj-prg.fr}
\date{\today}

\maketitle

\begin{abstract} 
For any compact surface $\Sigma$ with smooth, non-empty boundary, we construct a free boundary minimal immersion into a Euclidean ball $\mathbb{B}^N$ where $N$ is controlled in terms of the topology of $\Sigma$.
We obtain these as maximizing metrics for the isoperimetric problem for the first non-trivial Steklov eigenvalue.
Our main technical result concerns asymptotic control on eigenvalues in a delicate glueing construction which allows us to prove the remaining spectral gap conditions to complete the program by Fraser--Schoen and the second named author to obtain such mazimizing metrics.
Our construction draws motivation from earlier work by the first named author with Siffert on the corresponding problem in the closed case.
\end{abstract}

\section{Introduction}

Minimal surfaces naturally appear considering soap films: for instance, in the classical Plateau problem asking for area-minimizing disks whose boundary is a closed curve in $\mathbb{R}^3$. 
After this problem was independently solved by Douglas and Rad\'o, Courant \cite{courant} generalized this question, looking for disks minimizing the area, letting the boundary lie in a constraint surface of $\mathbb{R}^3$. 
This created a lot of activity around so-called free boundary minimal surfaces (see the surveys by Hildebrandt \cite{Hil} and M.~C.~Li \cite{li}). 
In the current paper, we focus on free boundary minimal surfaces in Euclidean unit balls.

In their celebrated, pioneering paper \cite{fs}, Fraser and Schoen made a one to one link between free boundary minimal 
immersions of a surface with boundary into a Euclidean unit ball and critical metrics of Steklov eigenvalues on this surface 
among metrics with boundary of unit length. 
They were inspired by the seminal work by Nadirashvili \cite{nadirashvili} and then El-Soufi and Ilias \cite{Ilias_ElSoufi}, who notably gave the one to one link between critical metrics for Laplace eigenvalues on closed surfaces among metrics with unit area and minimal immersions into a round sphere. 
Since then, the topic of free boundary minimal surface has gained more attention again.
In particular, uniqueness questions and construction of examples for a large variety of topologies have been studied extensively in recent years.
The work by Fraser and Schoen \cite{fs} and then extended by the second author \cite{petrides,petrides-2,petrides-3} gave a natural program for the construction of minimal surfaces by solving isoperimetric optimization problems for eigenvalues.
This has been a permanent source of inspiration for the research of the authors, and also for the current paper. 
Our main result completes the existence question of free boundary minimal immersion into a Euclidean unit ball, for any topology of the surface.

\begin{theorem} \label{thm_min_surf}
Let $\Sigma$ be a compact surface with non-empty boundary. 
Then there is $N \geq 3$ depending on the topology of $\Sigma$ and a branched free boundary minimal immersion $\Phi \colon \Sigma \to \mathbb{B}^N$.
\end{theorem}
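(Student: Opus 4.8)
The plan is to obtain $\Phi$ as the map defined by the first Steklov eigenfunctions of a metric on $\Sigma$ that maximizes the scale‑invariant functional $\bar\sigma_1(g):=\sigma_1(\Sigma,g)\,L_g(\partial\Sigma)$, following the variational program of Fraser--Schoen and the second named author. Write $\sigma^*(\gamma,k)$ for the supremum of $\bar\sigma_1$ over all metrics on the compact surface $\Sigma_{\gamma,k}$ of genus $\gamma$ with $k\ge 1$ boundary components. By that program, $\sigma^*(\gamma,k)$ is attained by a metric (a priori with finitely many conical singularities) whose first eigenfunctions furnish a branched free boundary minimal immersion into $\mathbb B^{N}$, with $N$ bounded by the multiplicity of $\sigma_1$ and hence by a function of $\gamma$ and $k$, \emph{provided} a finite list of strict spectral gap inequalities holds; geometrically these rule out the loss of the first eigenvalue through a collapsing neck or a piece bubbling off along a maximizing sequence. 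Inspecting the degenerations, it suffices to check, for each surface $\Sigma_{\gamma',k'}$ with $k'\ge1$ obtained from $\Sigma_{\gamma,k}$ by a single $1$-handle attachment --- an interior handle, $(\gamma',k')=(\gamma-1,k)$, or a boundary band, $(\gamma',k')=(\gamma,k-1)$ or $(\gamma-1,k+1)$ --- that $\sigma^*(\gamma,k)>\sigma^*(\gamma',k')$, the remaining comparisons following by transitivity. Some of these inequalities are already available from \cite{fs,petrides,petrides-2,petrides-3}; the outstanding ones, chiefly those relating surfaces of different genus, are what the glueing estimates of this paper supply.

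I would argue by induction on the complexity $2\gamma+k$, the base case being the flat disk, for which $\sigma^*(0,1)=2\pi$ is realized by $\mathbb D^2\hookrightarrow\mathbb B^2$. Assume the maximum is attained for all smaller complexity, say $\sigma^*(\gamma',k')$ by a metric $g'$ on $\Sigma_{\gamma',k'}$. To prove an outstanding inequality $\sigma^*(\gamma,k)>\sigma^*(\gamma',k')$, with $\Sigma_{\gamma,k}$ obtained from $\Sigma_{\gamma',k'}$ by attaching a $1$-handle, I would build a family of comparison metrics $g_\delta$ on $\Sigma_{\gamma,k}$: excise from $(\Sigma_{\gamma',k'},g')$ two small geodesic (half-)disks of radius of order $\delta$ about carefully chosen points $p,q$ (interior points for an interior handle, boundary points for a band) and glue in a thin rescaled model neck --- a piece of a fixed cylinder or catenoid --- arranged so that the metrics match to high order along the two glueing curves. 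It then remains to show $\bar\sigma_1(g_\delta)>\sigma^*(\gamma',k')$ for $\delta$ small.

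Controlling the bottom of the Steklov spectrum of $g_\delta$ as $\delta\to0$ is the heart of the matter and the main obstacle, because a thin neck is exactly the kind of feature that can produce an anomalously small eigenvalue. The first task is the lower bound $\liminf_{\delta\to0}\sigma_1(g_\delta)\ge\sigma_1(g')$, ensuring no eigenvalue escapes to $0$: one uses that the neck has vanishing area and, crucially, large conformal modulus, so that any admissible test function with small Rayleigh quotient must be nearly constant across the neck and therefore descends, up to a negligible error, to a test function on $(\Sigma_{\gamma',k'},g')$ --- here the fact that $p$ and $q$ remain joined by a path inside $\Sigma_{\gamma',k'}$ is what prevents a genuine dumbbell, hence a vanishing gap. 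The second, finer task is to pin down the sign of the leading correction: capacity estimates for the excised disks together with the expansion of the Steklov eigenfunctions of $g'$ near $p$ and $q$ express the first-order change of $\sigma_1$ --- and, since one must move the \emph{entire} bottom eigenspace, of the associated quadratic form on it --- in terms of the values of those eigenfunctions at $p$ and $q$; since $g'$ is a genuine maximizer, the multiplicity of $\sigma_1(g')$ and the nondegeneracy of its eigenfunctions are under enough control to choose $p,q$ so that this correction is strictly positive, while the change in boundary length is of strictly lower order. Combining the two tasks gives $\bar\sigma_1(g_\delta)>\sigma^*(\gamma',k')$ for small $\delta$, closing the induction and proving the theorem. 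The delicate point throughout --- and the technical contribution of the paper --- is the uniform, quantitative control of the entire low-lying Steklov spectrum, not merely of $\sigma_1$, across this glueing degeneration.
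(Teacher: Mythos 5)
Your overall reduction is the paper's: reduce to the existence of a maximizer (\cref{thm_max}), which by \cite{petrides-3} follows once the strict gap inequalities $\sigma_1(\gamma,k)>\sigma_1(\gamma-1,k+1)$ and $\sigma_1(\gamma,k)>\sigma_1(\gamma,k-1)$ hold; prove these by induction on $2\gamma+k$ using a glueing/degeneration argument starting from the (inductively known) maximizer on the simpler surface. But the concrete glueing you propose is precisely the construction the paper explains \emph{cannot} work, and this is the whole content of the paper, not a detail.

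You glue in a ``thin rescaled model neck --- a piece of a fixed cylinder or catenoid,'' then assert that capacity estimates express the leading correction to $\sigma_1$ in terms of the eigenfunction values at $p,q$, and that ``the multiplicity of $\sigma_1(g')$ and the nondegeneracy of its eigenfunctions are under enough control to choose $p,q$ so that this correction is strictly positive.'' Two things break. First, for a flat rectangle/cylinder strip of width $\sim\eps^2$ the error in matching $\sigma_\eps$ to $\sigma_\star$ is controlled only at scale $\eps^{1/2}$ (the $L^1$-norm of an $L^2$-normalized eigenfunction on the thin part), whereas the extra boundary length gain is of order $\eps$; so even with a perfect choice of attaching points one cannot conclude $\bar\sigma_1(g_\delta)>\bar\sigma_1(g')$ for small $\delta$. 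Second, even at the formal level there is an obstruction term proportional to $u_\star(p_0)+u_\star(p_1)$, and because one must control the entire $K$-dimensional first eigenspace simultaneously (for a maximizer $K=\mult\sigma_\star$ is necessarily $\ge 3$), there is in general no choice of $p_0,p_1$ making the relevant quadratic form on that eigenspace have the right sign; the paper recalls the flat equilateral torus as an explicit counterexample in the closed analogue. You cannot save the argument by a clever choice of two points.

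What the paper actually does differs from your proposal in three essential, interlocking ways. The thin part is not a cylinder or catenoid but a truncated \emph{cuspidal} domain (after Nazarov--Taskinen), whose Steklov quadratic form after the change of variables \eqref{eqdeftheta} mimics a truncated hyperbolic cusp and produces continuous spectrum in the limit; this is what buys the needed separation of scales. The attachment is deliberately \emph{asymmetric} in $p_0,p_1$: the charts $f_1,g_1$ near $p_1$ are rescaled by an extra factor $r^2$, so in the $\theta$-variable $\overline\theta(1)$ is negligible and the analysis at $p_1$ is essentially decoupled, reducing the obstruction to the single value $u_\star(p_0)$ rather than a sum. Finally --- and this is the step with no analogue in your sketch --- even the residual obstruction $u_\star(p_0)\neq0$ is not handled by a choice of attaching point, but by forming, in \cref{section6}, a corrected test function $\Psi=u_\eps^l+\gamma u_\eps^1$ from the first eigenfunction $u_\eps^1$ and a second eigenfunction $u_\eps^l$ (with $l\le K+1$) on $\Sigma_\eps$, with the coefficient $\gamma=-d_1/c_1$ chosen to kill the mean value $\overline\Theta(0)$ on the cuspidal side. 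Getting this to work requires the fine asymptotic expansions of \cref{claimassymptotic} for both eigenfunctions, the tuning of $t_\eps$ so that the thin-part mass is exactly $1/2$ (\cref{claimchoiceofM}), and \cref{claim_choice_l} guaranteeing a second low eigenfunction actually has mass on the cuspidal domain. None of this is present in, or recoverable from, the cylinder/catenoid picture. So while the reduction scheme in your first paragraph is correct, the proof of \cref{thm_glue} you outline does not go through.
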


Notice that in our result, the dimension of the target ball is controlled by the multiplicity of the first Steklov eigenvalue associated to the pull-back of the Euclidean metric along $\Phi$. 
This multiplicity is controlled in terms of the topology of $\Sigma$ (see \cite{KKP}). 
Beyond the work initiated by Fraser and Schoen \cite{fs}, there are by now plenty of other constructions of free boundary minimal surfaces. 
Using perturbation techniques, Folha, Pacard, and Zolotareva \cite{FPZ} obtained the existence of examples in $\mathbb{B}^3$ with genus $0$ and $1$ and $k$ boundary components for $k$ large. 
Using an equivariant version of min-max theory, Ketover obtained the existence of free boundary minimal surfaces in $\mathbb{B}^3$ of unbounded genus and three boundary components \cite{ketover_1,ketover_2}.
Examples of the same topological type using desingularization techniques were found by Kapouleas and Li \cite{kl}. 
Examples with high genus and connected boundary were constructed by Kapouleas and Wiygul. 
Another recent result by Carlotto, Franz and Schulz \cite{CFS} gives existence of free boundary minimal surfaces with arbitrary genus, connected boundary, and dihedral symmetry.

We obtain \cref{thm_min_surf} by completely resolving the existence question in the isoperimetric problem for the first non-trivial Steklov eigenvalue on compact surfaces wit non-empty boundary.

Recall that for a compact Riemannian surface $(\Sigma,g)$ with non-empty boundary the first non-trivial Steklov eigenvalue $\sigma_1(\Sigma,g)$ is the smallest non-zero eigenvalue of the Dirichlet-to-Neumann operator given by $Tu  =\partial_\nu \hat u$, where $\hat u \in C^\infty(\Sigma)$ denotes the harmonic extension of $u \in C^\infty(\partial \Sigma)$, and $\nu$ is the outward pointing normal field along $\partial \Sigma$.
We also write $L_g(\partial \Sigma)$ for the length of the boundary.

\begin{theorem} \label{thm_max}
Let $\Sigma$ be a compact surface with non-empty boundary.
Then there is a smooth metric $g$ on $\Sigma$ such that
$$
\sigma_1(\Sigma,g) L_g(\partial \Sigma) \geq \sigma_1(\Sigma,h) L_h(\partial \Sigma)
$$
for any smooth metric $h$ on $\Sigma$.
\end{theorem}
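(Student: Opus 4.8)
The plan is to carry out the variational program of Fraser–Schoen \cite{fs} and the second author \cite{petrides,petrides-2,petrides-3}, which reduces the existence of a maximizing metric to a list of strict spectral gap inequalities between topological types, and then to establish the remaining gap inequalities by a gluing construction supported by sharp asymptotics for the first Steklov eigenvalue. For a compact surface with boundary write $\sigma^*(\gamma,k)$ for the supremum of $\sigma_1(\Sigma,g)\,L_g(\partial\Sigma)$ over all smooth metrics on the surface of genus $\gamma$ with $k$ boundary components; this is finite, and the theorem says that for the surface at hand it is attained. The first ingredient, already available, is the conformally constrained problem: fixing a conformal class $c$ on $\Sigma$, the number $\Lambda(\Sigma,c):=\sup_{g\in c}\sigma_1(\Sigma,g)\,L_g(\partial\Sigma)$ is either realized by a smooth metric $g_c$ — which is then $\sigma_1$-extremal in its conformal class and thus induces a free boundary minimal immersion into a ball — or one is in the degenerate alternative where $\Lambda(\Sigma,c)$ equals a strictly smaller amount of eigenvalue mass together with some multiple of $2\pi$ bubbling off as free boundary disks, $2\pi$ being the Weinstock value of the disk.

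One then maximizes over conformal classes, $\sigma^*(\gamma,k)=\sup_c\Lambda(\Sigma,c)$, and analyzes a maximizing sequence of conformal classes. Either it stays in a compact part of moduli space, in which case one passes to a limit class and, provided $\sigma^*(\gamma,k)>2\pi$, invokes the conformal existence to produce a genuine maximizing metric; or it runs to the boundary of moduli space, so that a collection of disjoint simple closed curves pinches, $\Sigma$ degenerates to a nodal surface, and a capacity argument together with the conformal theory bounds $\limsup\sigma_1 L$ along the sequence by $\sigma^*(\Sigma')+2\pi m$, where $\Sigma'$ is a non-collapsed piece of strictly smaller complexity and $2\pi m$ accounts for collapsed disks. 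Hence a maximizer on $\Sigma$ exists once one verifies the strict inequalities
\[
\sigma^*(\gamma,k) > \sigma^*(\Sigma') + 2\pi m
\]
for every such degeneration — in effect, a strict monotonicity of $\sigma^*$ under removing a boundary component or a handle, with the appropriate $2\pi$ corrections. For the disk the maximizer is classical (the round disk, value $2\pi$), and for the lowest-complexity surfaces beyond it one need only check $\sigma^*(\Sigma)>2\pi$, which is elementary via a suitable symmetric test metric. So everything reduces to the strict gap inequalities, which I would prove by induction on the complexity $\gamma+k$.

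This is the crux. Granting inductively that each strictly lower-complexity surface carries a metric attaining its $\sigma^*$ — equivalently, a free boundary minimal immersion — I would construct, for small $\eps>0$, a comparison metric $g_\eps$ on $\Sigma$ by gluing the optimal pieces: across a thin neck to create a handle (raising the genus), along a short new boundary circle carrying a free boundary half-disk of boundary length of order $\eps$ (adding a boundary component), or by joining two optimal surfaces along a short neck. The construction is designed so that as $\eps\to0$ the metric $g_\eps$ degenerates to precisely the nodal configuration whose bound one wants to beat and so that
\[
\sigma_1(\Sigma,g_\eps)\,L_{g_\eps}(\partial\Sigma)\ \longrightarrow\ \sigma^*(\Sigma')+2\pi m .
\]
Together with a perturbation within the glued family — exploiting that the connected surface $\Sigma$ admits a strictly richer space of admissible test functions than the nodal limit — this produces the strict inequality $\sigma^*(\gamma,k)>\sigma^*(\Sigma')+2\pi m$.

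The difficult and decisive step, and the one I expect to be the main obstacle, is the convergence of $\sigma_1(\Sigma,g_\eps)$ above, specifically the lower bound: one must show the thin neck does not manufacture a spurious small eigenvalue that would make $\sigma_1(\Sigma,g_\eps)$ collapse. I would construct, for each piece, test functions close to the extremal Steklov eigenfunctions there, cut them off near the collapsing region, and glue them across the neck with a harmonic correction; the key estimates are that the Dirichlet energy of the correction and the $L^2(\partial\Sigma)$-error are controlled by the capacity of the collapsing region, which is $o(1)$ as $\eps\to0$. A careful choice of the family of glued test functions — so that their span has the right dimension and their Rayleigh quotients all converge to the target value rather than to something smaller — then forces $\liminf_{\eps\to0}\sigma_1(\Sigma,g_\eps)\,L_{g_\eps}(\partial\Sigma)\geq\sigma^*(\Sigma')+2\pi m$, the matching upper bound being the degeneration analysis of the second paragraph applied to $g_\eps$ itself. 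This asymptotic control on eigenvalues in the glueing construction, modeled on the closed-surface analysis of the first author with Siffert but adapted to the free boundary Steklov setting — where the eigenvalue lives on $\partial\Sigma$ and the neck may or may not touch the boundary — is the technical heart of the argument. Once all the strict gap inequalities are in place the induction closes and Theorem \ref{thm_max} follows; feeding the resulting maximizing metric into the Fraser–Schoen correspondence then yields the branched free boundary minimal immersion of Theorem \ref{thm_min_surf}.
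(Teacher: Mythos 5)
Your overall framework agrees with the paper's: reduce the existence of a maximizer to a list of strict spectral gaps between topological types via the second author's result in \cite{petrides-3}, and then establish those gaps by induction, gluing an optimal lower-complexity metric with a degenerating family and showing the functional $\sigma_1 \cdot L$ strictly exceeds its degenerate limit. Where your argument breaks down is in the technical heart, and it is precisely the obstruction the paper is designed to circumvent.

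You propose to create the handle or new boundary component with a ``thin neck'' or a ``free boundary half-disk of boundary length of order $\eps$'', show $\sigma_1(\Sigma,g_\eps)L_{g_\eps}(\partial\Sigma) \to \sigma^*(\Sigma') + 2\pi m$ via capacity estimates, and then obtain the strict inequality by ``a perturbation within the glued family.'' This last step is not merely a technicality you have left to fill in; it is the entire difficulty, and for the geometry you propose it provably cannot be filled in. If the attached piece is a thin flat rectangle or cylinder glued near $p_0$ and $p_1$, then the first-order correction to $\sigma_1$ contains the term $u_\star(p_0) + u_\star(p_1)$ (where $u_\star$ is a first Steklov eigenfunction of the lower-complexity piece), and this term dominates the positive gain of order $\eps$ coming from the extra boundary length. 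When $\sigma_\star$ has multiplicity $\geq 2$ --- which is unavoidable for a maximizing metric, since these arise from minimal immersions into $\mathbb{B}^N$ --- one cannot choose $p_0, p_1$ so that this obstruction vanishes for every eigenfunction in the eigenspace, and the strict inequality fails for the thin-neck family no matter how the parameters are tuned. Your heuristic that the glued surface ``admits a strictly richer space of admissible test functions than the nodal limit'' is also backwards as stated: enlarging the admissible set in a Rayleigh-quotient infimum lowers the eigenvalue, so richness of test functions by itself pushes $\sigma_1$ down, not up.

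What you would need --- and what the paper supplies --- is a drastically different geometry on the attached part: a cuspidal domain (after Nazarov--Taskinen), truncated and scaled so that its Steklov problem develops continuous spectrum as $\eps\to 0$, with a dilation parameter $t_\eps$ tuned so that the bottom of the thin part's spectrum resonates with $\sigma_\star$. The guiding heuristic is that the convergence rate of $\sigma_\eps^1$ to $\sigma_\star$ cannot be better than the $L^1$ norm of an $L^2$-normalized eigenfunction on the thin part; for a flat thin rectangle this is $\eps^{1/2}$, which loses against the $O(\eps)$ boundary-length gain, whereas the cuspidal geometry improves this. Even then, the obstruction reappears through $u_\star(p_0)$, and the paper only beats it with a second improvement: using an orthogonal eigenfunction $u_\eps^l$ (for some $l\in\{2,\dots,K{+}1\}$) to build a linear combination $\Psi = u_\eps^l + \gamma u_\eps^1$ whose mean value on the cuspidal boundary vanishes to leading order, thereby correcting the Poincar\'e-type error that your constant correction cannot remove. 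None of this is captured by capacity estimates, which only give the soft convergence you state, not the finer $o(\eps)$ control needed for strictness.

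Finally, a smaller issue of bookkeeping: the gaps you invoke, of the form $\sigma^*(\gamma,k) > \sigma^*(\Sigma')+2\pi m$, are the degeneration-at-infinity thresholds. The input required by \cite{petrides-3} as applied here is the more specific pair of strict monotonicity inequalities $\sigma_1(\gamma,k)>\sigma_1(\gamma-1,k+1)$ and $\sigma_1(\gamma,k)>\sigma_1(\gamma,k-1)$, which is what the gluing theorem (\cref{thm_glue}) produces directly once one knows the lower-complexity supremum is attained. Your formulation with an extra $+2\pi m$ is not wrong but obscures the induction scheme actually needed.
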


This generalizes and also reproves a result due to Fraser and Schoen if $\Sigma$ is orientable and has genus $0$, \cite{fs}. 
In fact, the proof of \cite[Proposition 4.3]{fs} (a special case of \cref{thm_glue} below) appears not to be complete, cf.\ \cite[Remark 1.5 and Appendix A]{GL}.
The analogous result for the first eigenvalue of the Laplace operator on closed surfaces is known by work of the second named author \cite{petrides} and the first named author with Siffert \cite{MS3}.
Very recently, in their interesting paper \cite{KS} Karpukhin and Stern have obtained some related results.
They show that for fixed genus $\gamma$ there is an infinite number of $b \in \IN$ such that \cref{thm_max} holds if $\Sigma$
has genus $\gamma$ and $b$ boundary components.
Their argument relies on a comparison result between Steklov and Laplace eigenvalues combined with the main result of \cite{MS3}.

The main result of the second named author in \cite{petrides-3} applied to the first non zero Steklov eigenvalue $\sigma_1$ 
states that if we set 
$$
\sigma_1(\gamma,k) = \sup_{g} \sigma_1(\Sigma,g) L_g(\partial \Sigma)
$$
for an orientable surface $\Sigma$ of genus $\gamma$ with $k$ boundary components, and if one has that
$$ 
\sigma_1(\gamma,k) > \sigma_1(\gamma-1,k+1) \hbox{ for } \gamma\geq 1 \hbox{ and } k\geq 1
$$ 
and
$$ 
\sigma_1(\gamma,k) > \sigma_1(\gamma,k-1) \hbox{ for } \gamma\geq 0 \hbox{ and } k\geq 2 \hskip.1cm,
$$
then $\sigma_1(\gamma,k)$ is achieved by a smooth metric. 
\cref{thm_max} then follows by induction from the following glueing theorem which is our main technical result.  

\begin{theorem} \label{thm_glue}
Let $(\Sigma,g)$ be a compact surface with smooth, non-empty boundary.
Suppose that $\Sigma'$ is topologically obtained from $\Sigma$ by attaching a strip along two opposite sides of its boundary along two disjoint portions of the boundary of $\Sigma$.
Then there is a smooth metric $g'$ on $\Sigma'$ such that
$$
\sigma_1(\Sigma',g') L_g'(\partial \Sigma') > \sigma_1(\Sigma,g) L_g(\partial \Sigma)
$$ 
\end{theorem}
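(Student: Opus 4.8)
The plan is to build the metric $g'$ on $\Sigma'$ by first fixing a long thin flat strip (or a family of such strips) and then prescribing how it is attached to $(\Sigma,g)$ along the two boundary arcs, so that in the degenerating limit the first Steklov eigenvalue $\sigma_1$ together with the boundary length converges to the value $\sigma_1(\Sigma,g) L_g(\partial\Sigma)$, while for a suitable non-degenerate choice of the parameters one gains a strictly positive correction. Concretely, I would introduce a small parameter $\eps>0$ controlling the width of the attached strip and consider a family of metrics $g'_\eps$ on $\Sigma'$ that looks like $g$ away from a neighborhood of the two gluing arcs and like a Euclidean rectangle of width $\sim\eps$ and some length $\ell$ in that neighborhood, interpolated smoothly. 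As $\eps\to 0$ the strip collapses to a slit, and the natural expectation is
\[
\sigma_1(\Sigma',g'_\eps)\,L_{g'_\eps}(\partial\Sigma')\longrightarrow \sigma_1(\Sigma,g)\,L_g(\partial\Sigma),
\]
because the Dirichlet-to-Neumann operator on the thin strip degenerates in a controlled way and, in the limit, identifies the two sides of the slit. The first main task is therefore to prove this convergence \emph{with quantitative error terms}: one needs an asymptotic expansion of $\sigma_1(\Sigma',g'_\eps)$ in $\eps$, not merely $\eps\to 0$ continuity.

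The key technical step, and the one I expect to be the main obstacle, is exactly this asymptotic control on eigenvalues through the degenerating glueing — the paper itself flags this as its central contribution and points out that the earlier argument of Fraser--Schoen on the genus-$0$ case was incomplete here. The delicate point is that collapsing a thin strip is a singular perturbation: harmonic functions on $\Sigma'$ that are almost $\sigma_1$-eigenfunctions can concentrate energy near the thin neck, and the Dirichlet-to-Neumann operator is non-local, so one cannot simply cut-and-paste test functions. I would handle this by a careful decomposition of $\Sigma'$ into the "bulk" part (a copy of $\Sigma$ minus collars of the two arcs) and the "strip" part, analyze the harmonic extension operator on each piece with matching Dirichlet/Neumann data on the interfaces, and use a Steklov-type Dirichlet-to-Neumann decomposition (a transmission/layer-potential argument) to reduce the spectral problem on $\Sigma'$ to a perturbed problem on $\partial\Sigma$. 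On the thin rectangle one separates variables across the width to see that, after rescaling, the strip contributes a Dirichlet-to-Neumann operator that converges, with rate $\eps$, to the operator gluing the two arcs' traces together; controlling the first non-trivial correction term of this operator (its sign and size) is where the real work lies. A compactness/variational argument (Courant--Fischer for the second smallest eigenvalue, using the one-dimensional space of constants as the forbidden direction) then upgrades operator convergence to eigenvalue convergence with the needed error bound.

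Granting the asymptotic expansion, the remaining step is to extract strict inequality for some fixed (non-degenerate) choice of parameters. The idea is that the leading-order term reproduces $\sigma_1(\Sigma,g)L_g(\partial\Sigma)$, and the first-order correction in $\eps$ — which depends on the length $\ell$ of the strip, on where along $\partial\Sigma$ the two arcs are placed, and on the precise interpolation profile — can be made strictly positive by an appropriate choice. Here one uses the freedom in the construction: lengthening the boundary by inserting the strip increases $L_{g'}(\partial\Sigma')$, and one must show this gain outweighs any loss in $\sigma_1$. A clean way to see the sign is to test $\sigma_1(\Sigma',g'_\eps)$ from below with a carefully chosen competitor function (for instance, the $\sigma_1$-eigenfunction of $(\Sigma,g)$ suitably modified over the strip, or a function adapted to the stretched boundary), producing a lower bound for the product that strictly exceeds $\sigma_1(\Sigma,g)L_g(\partial\Sigma)$ for all small $\eps>0$. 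Combined with the upper estimate from the asymptotic expansion, this yields the strict inequality in the statement, and then a fixed small $\eps$ gives the desired smooth metric $g'$. Throughout, one must also check that the interpolated metric can be taken smooth and that attaching the strip realizes precisely the topological modification described (a rectangle glued along two opposite sides to two disjoint boundary arcs), which is routine but needs to be recorded.
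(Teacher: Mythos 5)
Your proposal has a fundamental flaw that the paper itself explicitly discusses and rules out: you propose using a thin flat rectangle of width $\sim\eps$ as the attached strip, but this geometry cannot work. The paper explains why in the introduction. First, the guiding principle is that the convergence rate for the eigenvalue cannot be better than the $L^1$-norm of an $L^2$-normalized eigenfunction on the thin part, and for a flat rectangle this is of size $\eps^{1/2}$ --- which \emph{loses} against the boundary-length gain of order $\eps$ that you are counting on. In other words, the best you can hope to prove along your route is $\sigma_1(\Sigma',g'_\eps) \geq \sigma_1(\Sigma,g) - C\eps^{1/2}$, while the extra length is only $O(\eps)$, so the product $\sigma_1 L$ may go \emph{down}. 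Second, even ignoring the rate issue, the first-order obstruction term in the expansion is $u_\star(p_0)+u_\star(p_1)$ where $u_\star$ is a first Steklov eigenfunction and $p_0,p_1$ are the attaching points; this need not vanish when $\sigma_\star$ has multiplicity, which is exactly the situation one faces for a maximal metric. Your sentence ``one must show this gain outweighs any loss in $\sigma_1$'' is precisely what fails for the flat strip, regardless of how cleverly you choose $\ell$ or the interpolation profile.

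What the paper actually does is very different from what you propose. It replaces the flat strip by a \emph{cuspidal domain} $\Omega_\eps = \{r_\eps\leq y\leq 1, -y^2/2\leq x\leq y^2/2\}$ with a suitably scaled metric, precisely because this domain, like the truncated hyperbolic cusp used in the closed case, develops continuous spectrum at the bottom as $\eps\to 0$. This is what allows the eigenfunction mass on the thin part to be much more spread out, reducing the $L^1$-norm and improving the convergence rate. The construction also introduces a deliberate asymmetry between the two attaching boundaries (one is attached at scale $\eps^2$, the other at scale $r^2\eps^2$), which kills the contribution from $p_1$. After a delicate choice of the two free parameters $t_\eps$ (a dilation) and $r_\eps$ (the truncation), and after establishing a precise asymptotic expansion of $\sigma^1_\eps$ and of the eigenfunction on the cuspidal domain, one still obtains only $\sigma_\star - \sigma^1_\eps = O(\eps)$; the final and crucial improvement to $o(\eps)$ requires constructing an \emph{improved test function} as a linear combination $\Psi = u_\eps^l + \gamma\, u_\eps^1$ of the first eigenfunction with another nearby eigenfunction (taken among $\sigma_\eps^2,\dots,\sigma_\eps^{K+1}$ where $K=\mult\sigma_\star$), chosen to cancel the mean value of the test function on the strip. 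None of these ideas --- the cuspidal domain with continuous limit spectrum, the asymmetric attachment scales, the two-parameter tuning, the second eigenfunction correction --- appear in your proposal, and the naive flat-strip-plus-test-function argument you outline would not close.
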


This result gives the required gaps as soon as $\sigma_1(\gamma-1,k+1)$ and $\sigma_1(\gamma,k-1)$ are achieved by a smooth metric. 
We explain this in more detail in \cref{topchange}. 
As mentioned above, by induction and a combination of \cite{petrides-3} and \cref{thm_glue}, also using that the flat disk achieves $\sigma_1(0,1)$, we obtain \cref{thm_max}. 
While it is not written in \cite{petrides-3}, the non-orientable case follows along the very same lines. 
We refer to \cite{MS1} for more details on the non-orientable closed case. 
\cref{thm_glue} is the analogue of \cite[Theorem 1.3]{MS3} in the Steklov case. 
In the closed case we attach a cylinder along two small boundaries of removed disks on the original surface $\Sigma$ of genus $\gamma$. 
Let us now describe some features of the proof of \cref{thm_glue}.

As discussed in more detail in \cite{MS2,MS3} there are some serious obstructions on glueing constructions for which one can hope to obtain the monotonicity result from Theorem \ref{thm_glue}. 
Indeed, the most natural idea to prove this result in the closed case was to attach a thin flat cylinder of 
length $L$ and radius $\eps$ to a surface $\Sigma$ of genus $\gamma$ along the boundary of two removed disks of radius $\eps$, coming from \cite{ces} 
(they used the result from \cite{anne}). 
It is then natural but much harder to compute the first non-zero term in the asymptotic expansion of the first non-zero eigenvalue on the 
perturbed surface $\Sigma_{\eps}$ of genus $\gamma +1$, as $\eps\to 0$. 
Of course, the first eigenvalue on $\Sigma_{\eps}$ might very well be smaller than
the first eigenvalue on $\Sigma$.
But one can hope that the positive extra-term of size $\eps$ given by the asymptotic expansion of the area compensates this loss as $\eps \to 0$. 
For deep reasons, this is not always possible. 
One expects the range of parameters $L$ for which there is hope to get sufficiently strong asymptotic control on the eigenvalue to be such that the perturbed surface $\Sigma_\eps$ enjoys some interaction between the spectra on the thick and the thin part, respectively.

More precisely, one 
should adjust the length of the cylinder $L$ (potentially depending on $\eps$) so that the first eigenvalue of the interval of length $L$ is close to the first eigenvalue of the thick part to observe this interaction phenomenon.
However, there is a fatal obstruction term in the asymptotic expansion of the eigenvalue containing $u_{\star}(p_0) + u_{\star}(p_1)$ where $u_{\star}$ is a first eigenfunction of $\Sigma$
and the handle is attached near the points $p_0$ and $p_1$.
 If this term does not vanish, one can never obtain the strict inequality from \cref{thm_glue} by this technique at least for such parameters $L$, 
 but other parameters are not expected to have a chance anyways.
This can occur when the first eigenvalue has multiplicity on $\Sigma$, which is exactly the situation we have for a maximal metric. 

This remark also applies to the Steklov spectrum (see \cite{fs2} for similar constructions): 
The natural strip that one would like to use is a flat rectangle of size $L\eps\times \eps^2$ that we attach along intervals of length $\eps^2$ on the boundary of $\Sigma$. 
An analogous analysis gives the obstruction term $u_{\star}(p_0) + u_{\star}(p_1)$ where $u_{\star}$ is a first Steklov eigenfunction of $\Sigma$. 
Notice that in the special case of the sphere or  the disk, respectively, attaching the thin part along antipodal points gives that obstruction term from above vanishes.
But there there is no way to obtain this for instance on the flat equilateral torus (the unique maximizer among tori \cite{nadirashvili}).

Therefore, in order to prove these type of glueing results one has to drastically change the geometry of the attached thin part. 
The guiding principle here, originating from \cite{MS2}, is that the convergence rate can not be any better than the $L^1$-norm of an $L^2$-normalized eigenfunction on the thin part.
(For the flat rectangle this is of size $\eps^{1/2}$ losing against the additional boundary length on scale $\eps$.)
In particular, this forces us to work with a geometry that observes the formation of continuous spectrum in the limit $\eps \to 0$.
Also, we notably introduce a big asymmetry between the attaching boundaries, while we try to have a thin part with computable spectra. 
The glueing construction in \cite{MS3} by the first author jointly with Siffert uses a thin, hyperbolic cusp of area $\eps$ truncated at $R=R(\eps)$, whose Dirichlet and Neumann spectrum is perfectly computable by separation of variables, and has an infinite number of eigenvalues converging to $\frac{1}{4}$ as $\eps \to 0$. 
Again, one can play with a parameter of dilatation $t$, such that $\frac{t^2}{4}$ is the first eigenvalue of the thin part and is close to the first eigenvalue of the thick part, in order to capture the interaction between both spectra, and cleverly choose the parameters $t$ and $R$ so that the expected strict inequality occurs.

Compared to the approach in \cite{MS3} we have to face a number of new difficulties in order to prove the \cref{thm_glue}. 
These are most significantly related to the fact that the Steklov problem 
does not enjoy as strong separation of variables properties as available for the Laplace spectrum. On the other hand, we strongly believe that the techniques developed here - whose necessity grew out of the aforementioned reasons - could be used to shorten the argument in \cite{MS3}.

The domain analogous to hyperbolic cylinders in the Steklov case is a cuspidal domain introduced by Nazarov-Taskinen \cite{NT}, as the region on the plane such that $y>0$ and $-\frac{y^2}{2} \leq x\leq \frac{y^2}{2}$, since it is the simplest example having a continuous spectrum. 
We discovered that there is a strong link between the structure of the spectrum and associated eigenfunctions on a truncated hyperbolic cusp and on these cuspidal domains truncated at $r \leq y\leq 1$, and this is the reason why the approach in the closed case and Steklov case are so related.

\medskip
\subsection*{Organization of the paper}

In \cref{section2}, we explain the glueing construction and the ansatz for the behaviour of solutions to the eigenvalue equation of the Dirichlet-to-Neumann operator on cuspidal domains, 
inspired by the well-known solutions of the eigenvalue equation for the Laplacian on a truncated hyperbolic cusp. 

In \cref{section3}, we give a good upper bound on the eigenvalues $\sigma_{\eps}^i$ for $i\in \{1,\dots,K+1\}$ 
on the glued surface in terms of $\sigma_{\star}$ (the first eigenfunction on the thick surface $\Sigma$, of multiplicity $K$) and parameters on the thin part.

We then give a first pointwise estimate on eigenfunctions in the attaching region (\cref{section4}).

The accurate energy bound deduced from \cref{section3} combined with the results from \cref{section4}, which give some control on the boundary values, allow us to perform an asymptotic analysis on the thin part up to a good choice of dilatation parameter on the cuspidal domain
so that we have strong interaction between the first eigenvalue of the thick part and the bottom of the spectrum of the thin part. 
This fine analysis on the eigenvalue equation on the thin part is performed in \cref{section5}. 
By testing the first eigenfunction on the thick part  
we also deduce that $\sigma_{\eps}^1 - \sigma_{\star} = O(\eps)$ as the perimeter of the cuspidal domain tends to zero on scale $\eps$. 

To conclude, we need to improve this bound to $\sigma_{\eps}^1 - \sigma_{\star} = o(\eps)$ as $\eps\to 0$. 
Fortunately,  we can use one of the eigenfunctions associated to $\sigma_{\eps}^i$ for $i\in \{2,\dots,K+1\}$ 
in order to correct for the non-zero mean value of the first eigenfunction on the thick part
leading to an improved bound.
In \cref{section6}, we use the results from \cref{section2} and the refined energy estimates from \cref{section5} to choose an improved test function for $\sigma_{\star}$ and complete the proof of Theorem \ref{thm_glue}.

\begin{acknowledge}
The first named author is grateful to Iosif Polterovich for pointing out  the work \cite{NT} by Nazarov and Taskinen to him.
He would also like to thank the Universit\'e de Paris for their hospitality during his stay in December 2019.
The second named author would like to thank the University of Chicago for an invitation. 
Both visits initiated the preparation of this paper.
\end{acknowledge}

\section{The glueing construction} \label{section2}

In this section we introduce the two parameter family of competitors for the problem that we use to obtain \cref{thm_glue}.
We also discuss some of the properties of the cuspidal domain that we use.
In particular, we give several analogies regarding its spectral properties with the family of truncated hyperbolic cusps used in the closed case.

\subsection{The construction} 

Let $(\Sigma,g)$ be a compact connected Riemannian surface with a non-empty smooth boundary $\partial \Sigma$ of 
length $L_g(\partial \Sigma)= 1$.

Let $\eps>0$. 
We set 
$$
\Omega_{\eps} = \left\{ (x,y) \in \mathbb{R}^2 ; r_{\eps}\leq y\leq 1 , -\frac{y^2}{2}\leq x \leq \frac{y^2}{2} \right\}
$$ 
for a parameter $0<r_{\eps}<1$ endowed with the metric 
$$
g_{\epsilon} = \frac{\epsilon^{4}dx^2 + \epsilon^{2}dy^2}{t_{\eps}^2}
$$ 
for some $t_{\eps}>0$. 
Note that $(\Omega_{\eps},g_{\eps})$ is isometric to $\{ (x,y) \in \mathbb{R}^2 ; \eps r_{\eps} \leq y\leq \eps , -\frac{y^2}{2}\leq x \leq \frac{y^2}{2} \}$ endowed with the metric $\frac{dx^2 + dy^2}{t_{\eps}^2}$. 
We also write 
$$
I_{\eps}^{\pm} = \left\{ (x,y) \in \mathbb{R}^2 ; r_{\eps}\leq y\leq 1 ,  y = \pm \frac{x^2}{2} \right\}.
$$
We glue the cuspidal domain $\Omega_{\eps}$ at the neighborhood of two points $p_0,p_1 \in \partial\Sigma$ in the following way. 
For $i=0,1$, let $\varphi_i \colon B_i \to \mathbb{D}_2^+$ be a conformal chart at the neighborhood of $p_i \in B_i$ such that $\varphi_i(p_i) = 0$, $\varphi_i(B_i) = \mathbb{D}_2^+$, $\varphi_i(B_i\cap \partial \Sigma) = ]-2,2[\times \{0\}$ and
$$ 
g = \varphi_i^{\star} \left( e^{2\omega_i} \left(dx^2 + dy^2\right) \right)
$$
for a smooth function $\omega_i : \mathbb{D}_2^+ \to \mathbb{R}$. 
We denote by $\Sigma_{\eps}$ the surface (which also depends on our choice of $r$ and $t$ but those in turn will depend on $\eps$) that we obtain by gluing $\Sigma$ and $\Omega_{\eps}$ along the intervals $ [-\frac{1}{2},\frac{1}{2}] \times \{0\}$ in the charts 
$$ f_0  : B_0 \to \mathbb{D}_{\frac{2}{\eps^2}}^+  \text{ and } g_0 : \Omega_{\eps} \to E_0 \hskip.1cm,$$
at the neighborhood of $p_0$, where
\begin{equation} \label{eqdefE0} 
E_0 = \left\{(x,z); \frac{r-1}{\eps} \leq z\leq 0 \text{ and } -\frac{\left(1+\eps z\right)^2}{2} \leq x \leq \frac{\left(1+\eps z\right)^2}{2}\right\} 
\end{equation}
and in the charts 
$$
 f_1  : B_1 \to \mathbb{D}_{\frac{2}{r^2\eps^2}}^+ \text{ and } g_1 : \Omega_{\eps} \to E_1
 $$
at the neighborhood of $p_1$, where 
\begin{equation} \label{eqdefE1}
E_1 =  \left\{(x,z); \frac{r-1}{r^{2}\eps} \leq z\leq 0 \text{ and }-\frac{\left(1-r\eps z\right)^2}{2} \leq x \leq \frac{\left(1-r\eps z\right)^2}{2}\right\} 
\end{equation}
and where $f_i$ and $g_i$ are defined by the formulae
$$ 
f_0^{-1}(x,z) = \varphi_0^{-1}\left(\eps^2\left(x,z\right) \right) \text{ and }  g_0^{-1}(x,z)=(x, 1+\eps z)  \hskip.1cm,
$$
$$
 f_1^{-1}(x,z)=\varphi_1^{-1}\left(\eps^2 r^2\left(x,z\right) \right) \text{ and } g_1^{-1}(x,z)=\left(r^2 x, r \left(1 - r\eps z\right) \right) \hskip.1cm.
 $$
so that in the chart at the neighborhood of $p_0$ in $\Sigma_{\eps}$ the metric is given by
$$ 
\begin{cases} \eps^4 e^{2\omega_0(\eps^2(x,z))} \left(  dx^2 +dz^2 \right) & \text{ if } 0\leq z<\frac{2}{\eps^2}\text{ and }  -\frac{1}{2}\leq x\leq \frac{1}{2}  \\
\frac{\eps^4}{t_{\eps}^2} \left( dx^2 + dz^2 \right) & \text{ if } \frac{r-1}{\eps} \leq z\leq 0  \text{ and } -\frac{\left(1+\eps z\right)^2}{2} \leq x \leq \frac{\left(1+\eps z\right)^2}{2}
\end{cases} 
$$
and in the chart at the neighborhood of $p_1$ in $\Sigma_{\eps}$ the metric is given by
$$
 \begin{cases} r^4\eps^4 e^{2\omega_1(r^2\eps^2(x,z))} \left(  dx^2 +dz^2 \right) & \text{ if } 0\leq z<\frac{2}{\eps^2}  \text{ and }  -\frac{1}{2}\leq x\leq \frac{1}{2} \\
\frac{r^4\eps^4}{t_{\eps}^2} \left( dx^2 + dz^2 \right) & \text{ if } \frac{r-1}{r^{2}\eps} \leq z\leq 0 \text{ and } -\frac{\left(1-r\eps z\right)^2}{2} \leq x \leq \frac{\left(1-r\eps z\right)^2}{2} \hskip.1cm.
\end{cases} 
$$

We denote by $\sigma_{\star}$ the first non-zero Steklov eigenvalue of the surface $\Sigma$.
Moreover, $\sigma^1_{\eps}$ denotes the first non-zero Steklov eigenvalue on $\Sigma_{\eps}$.
We aim at proving that for suitable choices of the parameters $t_{\eps}$ and $r_{\eps}$ we have that
$$
\sigma^1_{\eps} = \sigma_{\star} + o(\eps) \hskip.1cm,
$$ 
where $\eps$ is the scale of the extra length of the boundary when we glue $\Omega_{\eps}$ to $\Sigma$.

We also remark that it is easy to approximate the metric on $\Sigma_\eps$ by smooth metrics e.g.\ using
\cite[Lemma 4.1]{kokarev} combined with the observation that $\Sigma_\eps$ carries
a smooth conformal structure.

\subsection{The topological change } \label{topchange}

Note that the new surface $\Sigma_\eps$ will differ topologically from the initial surface $\Sigma$.
Through different choices of the points $p_i$ and orientations of the charts $f_i,g_i$ this gives rise to various options for the topological type of $\Sigma_\eps$.
None of these choices will affect our analytic arguments at all.
However, it is of fundamental importance for our application towards \cref{thm_max} that this covers all the topological changes required to apply the main result from \cite{petrides-3}.
Let us briefly explain how to achieve this.

We first discuss the case that $\Sigma$ is orientable with genus $\gamma \geq 0$ and $k \geq 1$ boundary components.
If we take $p_0$ and $p_1$ to lie in the same component of the boundary we can obtain two types of surfaces by attaching the cuspidal domain $\Omega_\eps$.
If we choose compatible orientations for the charts $f_i$ and $g_i$, 
$\Sigma_{\eps}$ is orientable has genus $\gamma$ and $k+1$ boundary components.
If we reverse the orientation of one of the charts, the resulting surface will be non-orientable, have $k$ boundary components and non-orientable genus $2\gamma+1$.

If we assume that $k \leq 2$ there is also the option of taking $p_0$ and $p_1$ to lie in different components of $\partial \Sigma$.
In this case, if we attach $\Omega_\eps$ such that we obtain an orientable surface (i.e.\ we choose compatible orientations for the charts), we find that $\Sigma_\eps$
has genus $\gamma+1$ and $k-1$ boundary components.
If we reverse the orientation of one of the charts in which we glue, the new surface will have non-orientable genus $2\gamma+2$ and $k-1$ boundary components.

If we start with a non-orientable surface $\Sigma$ of non-orientable genus $\gamma$ and $k$ boundary components,
the topological type depends only on the location (in the same or in different components of the boundary) of the points $p_0$ and $p_1$.
It will either have non-orientable genus $\gamma$ and $k+1$ boundary components, or
non-orientable genus $\gamma+1$ and $k$ boundary components.

\subsection{Spectral properties of the cuspidal domain}

We briefly discuss some spectral properties of the cuspidal domains $\Omega_\eps$.
While we do not give any proofs here, we 
give some motivation behind our ansatz for the asymptotic expansion.
Later we deal with eigenfunctions on the cuspidal domain that are restrictions of eigenfunctions on $\Sigma_\eps$ and hence obey weaker control on the boundary values.

\subsubsection{The energy on one dimensional functions and the main ansatz} \label{sec_ansatz}

For simplicity we consider $t=1$ for the moment.
First of all, notice that for fixed $y \in [r,1]$ the standard Poincar{\'e} inequality applied along line segments $\{(x,y) \ : -\tfrac{y^2}{2} \leq x \leq \tfrac{y^2}{2}\} $
implies that functions with bounded energy on $\Omega_\eps$ endowed with $g_{\eps}$ become more and more constant in the $x$-direction for $\eps$ small.
Therefore, as a first ansatz, we would like to consider functions $\phi \colon \Omega_\eps \to \IR$ with $\phi(x,y)=\phi(y)$.
Of course, these will never be exact eigenfunctions.
The energy of these functions is given by
$$
\int_{\Omega_\eps} |\nabla \phi|^2\, dA_\eps = \eps \int_{r}^1 y^2 \phi_y^2 \, dy,
$$
while the boundary mass is
$$
\int_{I^+ \cup I^-} |\phi|^2\, dl_\eps = 2\eps (1+O(\eps^2))\int_{r}^1 |\phi|^2 \, dy.
$$
We want to point out that this resembles (up to the error term on the scale $\eps^2$, a factor of $2$, and changing $y \to 1/y$) exactly the Dirichlet energy and the $L^2$-norm, respectively,  on rotationally symmetric functions on a truncated hyperbolic cusp

This suggest to use the following change of variables that is the starting point of our asymptotic analysis.
For a function $\phi \in W^{1,2}(\Omega_\eps)$ we write

\begin{equation} \label{eqdeftheta}
\phi(x,y) = \frac{\sqrt{t_{\eps}} y^{-\frac{1}{2}}}{\sqrt{\eps}\sqrt{\ln\frac{1}{r}}} \theta \left(x, \frac{\ln(y)}{\ln(r)} \right) \hskip.1cm,
 \end{equation}
where
$ \theta $ is defined on 
$$
\widetilde{\Omega} = \left\{ (x,v) \in \mathbb{R}^2 ; 0 \leq v \leq 1 , -\frac{r^{2v}}{2}\leq x \leq \frac{r^{2v}}{2} \right\}.
$$ 
We introduce new coordinates for $\Omega$ by the change of variables $(x,y)=(x,r^v)$.
It is convenient to write, for a function $\theta$ defined on $\widetilde{\Omega}$, for the mean value on horizontal lines
 $$\overline{\theta} = r^{-2v} \int_{-\frac{r^{2v}}{2}}^{\frac{r^{2v}}{2}}  \theta dx \hskip.1cm.$$
We can then write in the new coordinates
\begin{equation} \label{eqthetasquare} 
\int_{I^{\pm} } \phi^2 dl_{\eps} =\int_0^1 \theta^2\left(\pm \frac{r^{2v}}{2},v \right) \sqrt{1+\eps^2r^{2v}} dv \hskip.1cm,  
\end{equation}
\begin{equation} \label{eqthetamean} 
\int_{I^{\pm} } \phi dl_{\eps} = \sqrt{\ln\frac{1}{r}} \frac{\sqrt{\eps}}{\sqrt{t_{\eps}}} \int_0^1 r^{\frac{v}{2}} \theta\left(\pm \frac{r^{2v}}{2},v \right) \sqrt{1+\eps^2r^{2v}} dv,  
\end{equation}
and
\begin{equation}  \label{eqgradtheta} 
\int_{\Omega} \left\vert \nabla\phi \right\vert^2_{g_{\eps}} dA_{\eps} =t_{\eps}\left( \frac{1}{\eps^2} \int_{0}^1 \left( \int_{-\frac{r^{2v}}{2}}^{\frac{r^{2v}}{2}}  \theta_x^2 dx \right) dv +  \int_0^1 \overline{ \left( \frac{\theta}{2} + \frac{\theta_v}{\ln \frac{1}{r}} \right)^2 } dv\right) 
\end{equation}
where $dA_{\eps} = \frac{\eps^{3}}{t_{\eps}^2}dxdy$, $\left\vert \nabla\phi \right\vert^2_{g_{\eps}} =t_{\eps}^2\left( \eps^{-4}\phi_x^2 +\eps^{-2}\phi_y^2\right)$ and $dl_{\eps} =\frac{\eps}{t_{\eps}} \sqrt{1+\eps^2y^2}dy$ on $I^{\pm}$, noticing that 
$$
\phi_x = \frac{\sqrt{t_{\eps}}y^{-\frac{1}{2}}}{\sqrt{\eps}\sqrt{\ln\frac{1}{r}}} \theta_x \hspace{0.2cm}  
\text{and} \hspace{0.2cm}
\phi_y = -\frac{\sqrt{t_{\eps}}y^{-\frac{3}{2}}}{\sqrt{\eps}\sqrt{\ln\frac{1}{r}}}  \left( \frac{\theta}{2} + \frac{\theta_v}{\ln\frac{1}{r}}  \right).
$$ 

Most of our arguments will take place on the level of $\theta$ since this is the scale on which we can hope to get good control 
on eigenfunctions.
Broadly speaking we aim at proving that if $\phi$ is an eigenfunction on $\Omega_\eps$, then $\theta$ converges to a solution of $f''+ \nu f =0$ for $\eps \to 0$ in a sufficiently strong sense.

\subsubsection{The asymptotic behaviour of the spectrum for the cuspidal domains}

The argument in \cite{MS3} relies on the following properties of the spectrum of the truncated hyperbolic cusp (with parameter $\alpha \in (1/3,1/2)$) with area on scale $\eps$:

\begin{itemize}
\item The first non-trivial Dirichlet eigenvalue is bounded away from zero.
\item The $L^1$-norm of the normal derivative along the boundary of the first non-trivial $L^2$-normalized eigenfunction is of size $o(\eps)$.
(It is on scale $\eps^{(3\alpha+1)/2}$).
\item The separation of two consecutive Dirichlet eigenvalues is much larger than the $L^1$-norm of the normal derivative along the boundary of the normalized eigenfunctions.
(It is $l \eps^{2\alpha}$ versus $l \eps^{(3\alpha+1)/2}$ for the $l$-th eigenvalue.)
\item
The first non-trivial Neumann and Dirichlet eigenvalues are related by $\lambda_0 \leq \mu_1$.
\end{itemize}

The starting point of our construction was to find an analogue for the Steklov problem.
It turns out that this is given by the cuspidal domains that we use.
In fact, one can prove similar assertions on their spectrum.
However, the more robust asymptotic techniques developed here to attack this problem turn out to apply more generally directly to restrictions of eigenfunctions from $\Sigma_\eps$ using that one can propagate
some control on the boundary values from the compactness of $\Sigma$.

\section{Upper bounds for eigenvalues of the glued surface} \label{section3}

Recall that we denote by $\sigma_\star$ the first non-trivial Steklov eigenvalue of $\Sigma$.
Moreover, we write $K = \mult \sigma_\star$ for the multiplicity of $\sigma_\star$.

Using appropriate extensions of $\sigma_\star$-eigenfunctions and an eigenfunction of the limiting quadratic form on $\Omega_\eps$ we can obtain some upper bounds on $\sigma_\eps^1,\dots,\sigma_\eps^{K+1}$
through a classical test function argument on the variational characterization of eigenvalues.

\begin{claim}
The first  
eigenvalue on $\Sigma_{\eps}$ satisfies
\begin{equation}\label{ineqsigmaeps} 
\sigma_{\eps}^1 \leq \min\left\{ \sigma_{\star} + O\left(\frac{\eps}{\left(\ln\frac{1}{r}\right)^2} + \eps^2 \right),  \frac{t_{\eps}}{8} + \frac{t_{\eps} \pi^2}{2\left(\ln r\right)^2} + O\left(\frac{\eps}{\left(\ln\frac{1}{r}\right)^{3}} + \eps^2\right)  \right\} \hskip.1cm,
\end{equation}
as $\eps\to 0$ and $r\to 0$.
Moreover, we have that
\begin{equation}\label{ineqsigmaeps2} 
\sigma_{\eps}^{K+1} \leq \max\left\{ \sigma_{\star} ,  \frac{t_{\eps}}{8} + \frac{t_{\eps} \pi^2}{2\left(\ln r\right)^2} \right\} + O\left(\frac{\eps}{\ln\frac{1}{r}} + \frac{\eps^{\frac{1}{2}}}{\left(\ln \frac{1}{r}\right)^{\frac{3}{2}}} + \eps^2 \right) \end{equation}
as $\eps\to 0$ and $r\to 0$.
\end{claim}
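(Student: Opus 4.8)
The plan is to construct explicit test functions on $\Sigma_\eps$ and feed them into the min-max characterization of Steklov eigenvalues, namely $\sigma_\eps^{m} = \min_{V} \max_{0\neq u\in V} \frac{\int_{\Sigma_\eps}|\nabla u|^2\, dA_\eps}{\int_{\partial\Sigma_\eps} u^2\, dl_\eps}$ over $(m{+}1)$-dimensional subspaces $V$ with a suitable orthogonality. The two quantities appearing in the minimum in \eqref{ineqsigmaeps} correspond to two different natural competitors: the $\sigma_\star$-eigenfunctions on the thick part, extended by (almost) zero across the gluing collar, and the bottom mode of the limiting one-dimensional quadratic form on the cuspidal domain $\Omega_\eps$, extended by a constant onto the thick part. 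For \eqref{ineqsigmaeps2} one takes the $(K{+}1)$-dimensional span of both families together.

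First I would handle the thick-part competitor. Take an $L^2(\partial\Sigma)$-orthonormal basis $u_1,\dots,u_K$ of the $\sigma_\star$-eigenspace, and a single non-constant function $\psi$ on the thick part; more precisely, for \eqref{ineqsigmaeps} one only needs one test function, a $\sigma_\star$-eigenfunction $u_\star$, transplanted to $\Sigma_\eps$ via the charts $f_i$. The point is that $u_\star$ must be modified near $p_0,p_1$ so that it extends across the neck: using a logarithmic cutoff on the half-disk $\mathbb D^+_{2/\eps^2}$ (cut off between radius $O(1)$ and radius $O(\eps^2)$ in the $(x,z)$ coordinates, i.e.\ between $O(\eps^2)$ and $O(1)$ in the original conformal coordinate), the Dirichlet energy added by the cutoff is $O(1/\ln(1/\eps^2)) = O(1/\ln(1/r))$ once $r$ and $\eps$ are polynomially related, but one can do better: since the added boundary length of the neck is only on scale $\eps$ (the intervals $[-\tfrac12,\tfrac12]\times\{0\}$ have length $O(\eps^2)$ in the unscaled picture, hence $O(\eps)$ after accounting for the metric factor $\eps^{?}/t_\eps$ — one must track the factors carefully), and since the harmonic extension minimizes energy, the correct bookkeeping gives an energy error $O(\eps/(\ln\tfrac1r)^2 + \eps^2)$ and a boundary-mass error of the same order. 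Plugging into the Rayleigh quotient and expanding $\tfrac{A+O(\delta)}{B+O(\delta)} = \tfrac AB + O(\delta)$ yields the first term $\sigma_\star + O(\eps/(\ln\tfrac1r)^2 + \eps^2)$.

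Next, the thin-part competitor: by the computations in \cref{sec_ansatz}, a function $\phi$ on $\Omega_\eps$ depending only on $y$ corresponds via \eqref{eqdeftheta} to $\theta=\theta(v)$, and by \eqref{eqgradtheta}, \eqref{eqthetasquare} its Rayleigh quotient becomes (to leading order, dropping the $\theta_x$ term which vanishes and the $O(\eps^2)$ boundary corrections) $t_\eps \int_0^1 \overline{(\tfrac\theta2 + \tfrac{\theta_v}{\ln\frac1r})^2}\,dv \big/ \big(2\int_0^1\theta^2\,dv\big)$. Choosing $\theta(v) = \cos(\pi v)$ — an eigenfunction of the limiting problem with Neumann-type data matching the collar — gives $\int_0^1(\tfrac\theta2+\tfrac{\theta_v}{\ln\frac1r})^2 = \tfrac18\int\theta^2 + \tfrac{\pi^2}{2(\ln r)^2}\int\theta^2$ after integrating (cross term integrates to zero), yielding the value $\tfrac{t_\eps}{8} + \tfrac{t_\eps\pi^2}{2(\ln r)^2}$. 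This $\phi$ is then extended to $\Sigma_\eps$ by the constant value $\phi$ takes at $y=1$ (the attaching circle), again with a logarithmic cutoff on the thick side; the transition this time costs energy $O(\eps/(\ln\tfrac1r)^3)$ because the function value there is of size $(\ln\tfrac1r)^{-1/2}$ times the $\eps^{-1/2}$ normalization, so its square against the $O(\eps)$-scale collar length gives the stated error. Taking the minimum of the two Rayleigh quotients gives \eqref{ineqsigmaeps}.

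For \eqref{ineqsigmaeps2}, the plan is to use the $(K{+}1)$-dimensional space spanned by the $K$ transplanted, cutoff $\sigma_\star$-eigenfunctions $u_1,\dots,u_K$ and the single transplanted thin-part function $\phi$. These are almost $L^2(\partial\Sigma_\eps)$-orthogonal: $u_i,u_j$ are orthogonal on $\partial\Sigma$ up to the $O(\eps^2)$ collar contribution, and $\langle u_i,\phi\rangle$ is small because $\phi$ is (nearly) constant on the thick part while $u_i$ has zero mean there, up to cutoff errors — here the cross terms are of mixed order, producing the $\eps^{1/2}/(\ln\tfrac1r)^{3/2}$ term (one factor $\eps^{1/2}$ from $\phi$'s amplitude on the collar, one $\eps^{1/2}$ from a collar length, matched against $(\ln\tfrac1r)^{-3/2}$). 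Diagonalizing the boundary mass form to genuinely orthogonalize perturbs eigenvalues by these same small amounts, and the Rayleigh quotient of any element of the span is at most $\max\{\sigma_\star + \text{err},\ \tfrac{t_\eps}{8}+\tfrac{t_\eps\pi^2}{2(\ln r)^2} + \text{err}\}$ since the two blocks are essentially decoupled in energy. This gives \eqref{ineqsigmaeps2}.

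The main obstacle I anticipate is the careful bookkeeping of metric factors across the three coordinate systems (the conformal chart $\varphi_i$, the scaled chart $f_i$, and the cuspidal chart $g_i$ with its factors $\eps^2$, $r^2\eps^2$, and the conformal factor $t_\eps^{-2}$), in order to verify that the boundary-length added by the gluing really is on scale $\eps$ and that the cutoff energies and the mixed inner products land at precisely the claimed orders $\eps/(\ln\tfrac1r)^2$, $\eps/(\ln\tfrac1r)^3$, and $\eps^{1/2}/(\ln\tfrac1r)^{3/2}$ — getting the powers of $\ln\tfrac1r$ right requires exploiting that the optimal (harmonic) cutoff on an annulus of ratio $\eps^{-2}$ has energy $\sim (\ln\tfrac1r)^{-1}$ and then squaring/interacting correctly with the $(\ln\tfrac1r)^{-1/2}$ amplitude of the normalized thin-part mode. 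The eigenvalue expansion and near-orthogonalization steps are routine perturbation theory once these estimates are in place.
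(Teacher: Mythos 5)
Your overall plan — feed explicit competitors into the min--max characterization, with one family coming from $\sigma_\star$-eigenfunctions on the thick part and one coming from the bottom mode of the limiting one-dimensional form on $\Omega_\eps$, then span them for $\sigma_\eps^{K+1}$ — is the same as the paper's. However, two of your concrete choices differ from the paper's and one of them is a genuine error.

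\medskip
\textbf{Thin-part test function: $\cos(\pi v)$ vs.\ $\sin(\pi v)$.} You take $\theta(v)=\cos(\pi v)$ ``with Neumann-type data matching the collar'' and then extend it onto the thick part by its constant boundary value at $y=1$, paying what you claim is an energy cost $O(\eps/(\ln\tfrac1r)^3)$. This cannot work. The normalized thin-part mode has amplitude $a\sim\sqrt{t_\eps}/(\sqrt{\eps}\sqrt{\ln\tfrac1r})$ at the attaching circle, and extending a constant of that size across the thick side costs at least $\sim a^2/\ln R\sim 1/(\eps\ln\tfrac1r\cdot\ln R)$ for any annulus of dyadic ratio $R$ available on the compact thick part. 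With the paper's choice $\ln\tfrac1r=\eps^{-\alpha}$, $\alpha<1$, this is $\gtrsim\eps^{\alpha-1}$, which diverges; it is not $O(\eps/(\ln\tfrac1r)^3)$, and your own back-of-envelope (``amplitude $\sim\eps^{-1/2}(\ln\tfrac1r)^{-1/2}$, square against $O(\eps)$ collar length'') produces $(\ln\tfrac1r)^{-1}$, not $\eps/(\ln\tfrac1r)^3$ either. The paper avoids the problem entirely by choosing $f(v)=\sin(\pi v)$, which vanishes at $v=0$ and $v=1$, so the test function $\phi$ vanishes at both attaching circles and its extension by zero to $\Sigma$ costs nothing. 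The Rayleigh quotients of $\sin(\pi v)$ and $\cos(\pi v)$ agree to leading order because the cross term $\int_0^1 ff'\,dv$ vanishes for both, but only the Dirichlet mode gives a usable competitor. The residual $\eps/(\ln\tfrac1r)^3$ error in the paper actually comes from subtracting the squared mean $(\int_{I^\pm}\phi\,dl_\eps)^2$ in the variational quotient, not from an extension cost; after two integrations by parts $\int_0^1 r^{v/2}\sin(\pi v)\,dv\sim 4\pi/(\ln r)^2$, and this squared against the length-normalization gives the stated order.

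\medskip
\textbf{Thick-part extension across the neck.} You propose a logarithmic cutoff on the thick side and acknowledge it gives $O(1/\ln\tfrac1r)$, then assert ``the correct bookkeeping gives $O(\eps/(\ln\tfrac1r)^2+\eps^2)$'' by invoking energy-minimality. This skips the key construction: the paper does not cut $u_\star$ off on the thick side at all. It replaces $u_\star$ by the constant $u_\star(p_i)$ only on disks of conformal radius $\sim\eps^2$ (so the oscillation there is $O(\eps^2)$ and the added thick-side energy is $O(\eps^4)$), and then extends linearly in $\ln y$ across $\Omega_\eps$ from $u_\star(p_0)$ to $u_\star(p_1)$. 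It is this interpolation that contributes the dominant error $\eps\int_r^1 y^2|\partial_y u_\eps|^2\,dy=O(\eps/(\ln\tfrac1r)^2)$. Without spelling out a competitor on $\Omega_\eps$ you do not actually obtain the first term in \eqref{ineqsigmaeps}.

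\medskip
The $(K{+}1)$-dimensional span argument and the identification of the cross-term order $\eps^{1/2}/(\ln\tfrac1r)^{3/2}$ are in the right direction, but they inherit the same problems from the two choices above and would need to be redone with $\sin(\pi v)$ and the explicit linear extension on $\Omega_\eps$.
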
 

\begin{proof}
Let $u_{\star}$ be an eigenfunction associated to the first eigenvalue $\sigma_{\star}$ on $\Sigma$ with unit $L^2$-norm on the boundary. 
We set 
$$ 
u_{\eps} = \begin{cases} \left(u_{\star}(p_1) - u_{\star}(p_0)\right) \frac{\ln y}{\ln r} + u_{\star}(p_0) & \text{in} \  \Omega_{\eps} \\ 
u_{\star} + \eta( \eps^2 \varphi_0 ) \left( u_{\star}(p_0) - u_{\star} \right) + \eta( r^2 \eps^2 \varphi_1) \left(u_{\star}(p_1) - u_{\star} \right) & \text{in} \ \Sigma
  \end{cases}
 \hskip.1cm,
 $$
where $\eta$ is a smooth cut-off function such that $\eta = 1$ on $\mathbb{D}$, $\eta=0$ in $\mathbb{R}^2 \setminus \mathbb{D}_2$ 
and $\nabla \eta$ is a bounded function, so that $u_{\eps}$ is a Lipschitz function on $\Sigma_{\eps}$ which extends
$u_{\star}$ well to $\Omega_{\eps}$ up to slightly modifying $u_{\star}$ at the neighbourhood of $p_0$ and $p_1$.

By the variational characterization of the first eigenvalue we have that
$$
 \sigma_\eps^1 
 \leq 
 \frac{\int_{\Sigma_{\eps}}\left\vert \nabla u_{\eps}\right\vert_{g_{\eps}}^2 dA_{\eps}}{\int_{\partial\Sigma_{\eps}}\left(  u_{\eps} \right)^2 dl_{\eps} - \frac{\left(\int_{\partial\Sigma_{\eps}}u_{\eps}  dl_{\eps}\right)^2}{ 1-2\pi\eps^2 + \frac{2\eps}{t} +O(\eps^3) }} 
 \leq 
 \frac{\int_{\Sigma}\left\vert \nabla u_{\eps} \right\vert_{g}^2 dA_{g} + \int_{\Omega_{\eps}} \left\vert \nabla u_{\eps} \right\vert_{g_{\eps}}^2 dA_{g_{\eps}}   }{\int_{\partial\Sigma}u_{\star}^2 dl_{\eps} - O(\eps^2)} 
 $$
as $\eps \to 0$, where we noticed in the denominator that $\left\vert u_{\star} - u_{\star}(p_0) \right\vert = O(\eps^2)$ and $\left\vert u_{\star} - u_{\star}(p_1) \right\vert = O(r^2 \eps^2)$ in the neighbourhoods of $p_0$ and $p_1$ on which $u_\eps$ and $u_\star$ do not agree. 
We also noticed that $u_{\eps}$ is uniformly bounded on $\Omega_{\eps}$ so that the mean value on the boundary is controlled by the length of order $\eps$ of the boundary. 
For the gradient, we have
$$
 \int_{\Omega_{\eps}} \left\vert \nabla u_{\eps} \right\vert_{g_{\eps}}^2 dA_{g_{\eps}} = \eps \int_{r}^1 y^2 \left\vert \partial_y u_{\eps} \right\vert^2 dy \leq \frac{\eps}{\left(\ln\frac{1}{r}\right)^2}
 $$
and
\begin{eqnarray*}
 \int_{\Sigma}\left\vert \nabla u_{\eps} \right\vert_{g}^2 dA_{g} 
 & \leq & 
 \int_{\Sigma}\left\vert \nabla u_{\star} \right\vert^2 dA_{g} + 2\int_{\Sigma}\left\vert \nabla u_{\star} \right\vert \left\vert \nabla (u_{\star}-u_{\eps}) \right\vert dA_{g}+ \int_{\Sigma}\left\vert \nabla \left(u_{\star}-u_{\eps}\right) \right\vert^2 dA_{g} 
 \\
& \leq & 
\int_{\Sigma}\left\vert \nabla u_{\star} \right\vert_{g}^2 dA_{g} + 2C \eps^2 \left\| \nabla\left( u_{\star}-u_{\eps}\right) \right\|_{L^2} + \left\| \nabla\left(u_{\star}-u_{\eps}\right)  \right\|_{L^2}^2
 \\
& \leq & 
\int_{\Sigma}\left\vert \nabla u_{\star} \right\vert_{g}^2 dA_{g} + O(\eps^4)
\end{eqnarray*} 
as $\eps\to 0$, where we used 
conformality of $\eps^2 \varphi_0$ and $\eps^2\varphi_1$ and
again that $\left\vert u_{\star} - u_{\star}(p_0) \right\vert = O(\eps^2)$ and $\left\vert u_{\star} - u_{\star}(p_1) \right\vert = O(r^2 \eps^2)$ on $\supp \nabla (\eta (\eps^2 \varphi_0))$ and $\supp \nabla (\eta (r^2\eps^2 \varphi_1))$, respectively.
This gives the first term in the right-hand side of the inequality \eqref{ineqsigmaeps}.

Now, we aim at proving the inequality
\begin{equation} \label{eqineqsigmaeps} \sigma_\eps^1 \leq \frac{t_{\eps}}{8} + \frac{t_{\eps} \pi^2}{2 (\ln r )^2}+O\left(\eps\left(\ln\frac{1}{r}\right)^{-3} + \eps^2 \right)    \hskip.1cm. \end{equation}
Indeed, we test the variational characterization of $\sigma_{\eps}$ with the functions 
$$
\phi(y) = \frac{\sqrt{t_{\eps}}y^{-\frac{1}{2}}}{\sqrt{\eps}\sqrt{\ln\frac{1}{r}}} \sin\left(\pi \frac{\ln(y)}{\ln(r)}\right).
$$
That is, thanks to \eqref{eqdeftheta}, we compute the quantities on the right hand sides in \eqref{eqthetasquare} \eqref{eqthetamean} and \eqref{eqgradtheta} for $f(v) = \sin(\pi v)$.
We have that
$$ \int_0^1  \left( \frac{f}{2} + \frac{f'}{\ln\frac{1}{r}} \right)^2  dv = \int_0^1   \frac{f^2}{4}  dv + \int_0^1 \frac{(f')^2}{\left(\ln\frac{1}{r}\right)^2} dv + \int_0^1  \frac{f f'}{\ln\frac{1}{r}}  dv \hskip.1cm,$$
and that
$$ \int_0^1  \frac{f f'}{\ln\frac{1}{r}}  dv = \int_0^1  \frac{\left(f^2\right)'}{2\ln\frac{1}{r}}  dv = \frac{f(1)^2 - f(0)^2}{2\ln\frac{1}{r}} =0 \hskip.1cm.$$
We denote by $I = \int_{0}^1 r^{\frac{v}{2}} \sin(\pi v)dv $. 
Integrating by parts twice, we get
$$ 
I \left( 1 + \frac{4\pi^2}{\left(\ln r \right)^2} \right) = \frac{4\pi}{\left(\ln r\right)^2} \left( 1+ r^{\frac{1}{2}}\right)  
$$
so that
$$\eps^{\frac{1}{2}} \sqrt{ \ln\frac{1}{r}} \int_0^1 r^{\frac{v}{2}}f(v) \sqrt{1+\eps^2 r^{2v}}dv = \eps^{\frac{1}{2}}\sqrt{ \ln\frac{1}{r}} I (1+O(\eps^2)) = O\left( \eps^{\frac{1}{2}}\left(\ln\frac{1}{r}\right)^{-\frac{3}{2}} \right)$$
as $\eps \to 0$. 
Thanks to  \eqref{eqthetasquare} \eqref{eqthetamean} and \eqref{eqgradtheta} we get from the variational characterization that
$$ 
\sigma_\eps^1  \leq \frac{\int_{\Omega} \left\vert \nabla\phi \right\vert^2_{g_{\eps}} dA_{\eps}  }{\int_{I^{+} \cup I^{-}} \phi^2 dl_{\eps} - \frac{\left(\int_{I^{+} \cup I^{-}} \phi dl_{\eps}\right)^2}{1-2\pi\eps^2 + \frac{2\eps}{t} +O(\eps^3)}}   \leq t_{\eps}  \frac{ \int_0^1   \frac{f^2}{4}  dv + \int_0^1 \frac{(f')^2}{\left(\ln\frac{1}{r}\right)^2} dv }{ 2 \int_0^1f^2  dv - O\left(\eps\left(\ln\frac{1}{r}\right)^{-3} + \eps^2 \right) } 
$$
as $\eps \to 0$, which gives the inequality \eqref{eqineqsigmaeps}.

Now, in order to prove the inequality \eqref{ineqsigmaeps2} on $\sigma_{\eps}^{K+1}$, it suffices to take the previous test functions (note that we have $K$ linearly independent functions of the first type) and notice that they are orthogonal up to a small error term we shall compute:
\begin{eqnarray*}
 \int_{\Omega_{\eps}}\left\langle \nabla u_{\eps},\nabla \phi \right\rangle_{g_{\eps}} dA_{g_{\eps}} & = & \eps \int_{r}^1 y^2  u_{\eps,y} \phi_{y} \sqrt{1+\eps^2 y^2}dy  \\
 & = & \frac{t_{\eps}^{\frac{1}{2}}\eps^{\frac{1}{2}}\left(u_{\star}(p_0) - u_{\star}(p_1)\right)}{\left(\ln \frac{1}{r} \right)^{\frac{1}{2}}} \int_{0}^1 r^{\frac{v}{2}} \left( \frac{f'}{2} + \frac{f}{\ln\frac{1}{r}} \right) \sqrt{1+\eps^2 r^{2v}} dv  \\
 & = & O\left( \eps^{\frac{1}{2}} \left(\ln \frac{1}{r}\right)^{-\frac{5}{2}}  \right) +O(\eps^2)
\end{eqnarray*} 
where we use the computation of $I = \int_{0}^1 r^{\frac{v}{2}} \sin(\pi v)dv $ and we also compute $\int_{0}^1 r^{\frac{v}{2}} \cos(\pi v)dv $ by integration by parts. We also have that
\begin{eqnarray*}
 \int_{I^+ \cup I^-} u_{\eps}\phi  dl_{g_{\eps}} & = & \frac{\eps}{t_{\eps}} \int_{r}^1 u_{\eps} \phi \sqrt{1+\eps^2 y^2}dy  \\
 & = & \frac{\eps^{\frac{1}{2}}\left(\ln \frac{1}{r} \right)^{\frac{1}{2}}}{t_{\eps}^{\frac{1}{2}}} \int_{0}^1 r^{\frac{v}{2}} ((u_{\star}(p_1)-u_{\star}(p_0) )v+ u_{\star}(p_0)) f(v) \sqrt{1+\eps^2 r^{2v}} dv  \\
 & = & O\left( u_{\star}(p_0) \eps^{\frac{1}{2}} \left(\ln \frac{1}{r}\right)^{-\frac{3}{2}} + \eps^{\frac{1}{2}} \left(\ln \frac{1}{r}\right)^{-\frac{5}{2}} + \eps^2 \right)\hskip.1cm,
\end{eqnarray*} 
where we can prove by several integrations by parts that $\int_{0}^1 r^{\frac{v}{2}} v \sin(\pi v)dv = O\left( \left(\ln \frac{1}{r}\right)^{-3}\right)$.

Moreover, if $u_\star$ and $v_\star$ are two orthonormal $\sigma_\star$-eigenfunctions on $\Sigma$, we have that
\begin{align*}
\int_{\Sigma_\eps} | \langle \nabla u_\eps , \nabla v_\eps \rangle_{g_\eps} | dA_\eps
\leq&
\int_\Sigma |\nabla u_\star| |\nabla (v_\eps-v_\star)| 
+ \int_\Sigma |\nabla v_\star| |\nabla(u_\eps-u_\star)| 
\\
&+\int_\Sigma |\nabla(u_\eps-u_\star)| |\nabla (v_\eps-v_\star)| 
+\eps \int_r^1 y^2 u_{\eps,y} v_{\eps,y} dy
\\
\leq&
C \eps^4
+
C \frac{\eps(u_\star(p_0)-u_\star(p_1))(v_\star(p_0)-v_\star(p_1))}{\left( \ln \frac{1}{r}\right)^2}
\\
\leq& C \eps^4 + \frac{C \eps}{\left(\ln \frac{1}{r}\right)^2}.
\end{align*}
Similarly, assuming that $u_\star(p_0)=0$, we also have that
\begin{align*}
\int_{\partial \Sigma_\eps} |u_\eps v_\eps| dl_\eps
&\leq
C \eps^2 +C \frac{\eps}{t_\eps} \int_r^1 |u_\eps  v_\eps| dy
\leq C \eps^4 + C \frac{\eps}{t_\eps} \int_r^1 \frac{\ln y}{\ln r} dy
\\
&\leq
C \eps^4 + C \frac{\eps}{\ln \left(\frac{1}{r}\right)}.
\end{align*}

Finally let us choose $u_\star^1,\dots, u_\star^K$ an orthonormal basis of $\sigma_\star$-eigenfunctions such that
$u_\star^i(p_0)=0$ for $i \geq 2$ and denote by $u_\eps^1,\dots, u_\eps^K$ the corresponding extensions constructed above.
The estimate \eqref{ineqsigmaeps2} now easily follows from the estimates above and the variational characterization of eigenvalues applied to the space spanned by $u_\eps^1,\dots, u_\eps^K,\phi$ from above.
\end{proof}

\section{Pointwise estimates on eigenfunctions} \label{section4}

We first aim at giving pointwise estimates on eigenfunctions at the neighbourhood of $p_0$ and $p_1$ on $\Sigma$ in order to get
some control on the boundary values of the eigenvalue equation on $\Omega_\eps$.
More precisely, we want to get estimates on $\overline{\theta}(0)$ and $\overline{\theta}(1)$, where we use the change of variables to the function $\theta$ as described in \cref{sec_ansatz}.
It is natural to compare them to $u_{\star}(p_0)$ and $u_{\star}(p_1)$, where $u_{\star}$ is the weak limit in $W^{1,2}(\Sigma)$ (and strong limit in $L^2(\partial \Sigma)$) of $u_{\eps}$. Notice that by the elliptic estimates of \cite{robin} on domains with corners, the functions $u_{\eps}$ are $C^{0,\alpha}$ up to the boundary. We have the following compatibility conditions:
\begin{equation} \label{eqcompatibilityp0} 
\frac{1}{\eps^2}\int_{-\frac{\eps^2}{2}}^{\frac{\eps^2}{2}} \left( u_{\eps}\circ \varphi_0^{-1} \right) (x,0)dx = \overline{\phi}(1) = \frac{\sqrt{t_{\eps}}\overline{\theta}(0)}{\sqrt{\eps}\sqrt{\ln\frac{1}{r}}} \hskip.1cm,
\end{equation}
\begin{equation} \label{eqcompatibilityp1} 
\frac{1}{r^2\eps^2}\int_{-\frac{r^2\eps^2}{2}}^{\frac{r^2\eps^2}{2}} \left( u_{\eps}\circ \varphi_1^{-1} \right) (x,0)dx  = \overline{\phi}(r) = \frac{\sqrt{t_{\eps}}r^{-\frac{1}{2}} \overline{\theta}(1)}{\sqrt{\eps}\sqrt{\ln\frac{1}{r}}} \hskip.1cm.
\end{equation}
Because of the factor $r^{-\frac{1}{2}}$ in \eqref{eqcompatibilityp1} that is not present in \eqref{eqcompatibilityp0}, we see that $\overline{\theta}(0)$ and $\overline{\theta}(1)$ do not play the same role. 
More precisely, $\overline{\theta}(1)$ will be much smaller. 
Therefore, we will never need a very precise estimate in the neighborhood of $p_1$. 

\begin{claim} \label{claimpointwise}
Let $u_\eps$ be an $L^2(\partial \Sigma_\eps)$-normalized $\sigma_\eps^l$-eigenfunction for $l \in \{1,\dots,K+1\}$.
We have a constant $C>0$ independent of $\eps$ and $r$ such that
\begin{equation} \label{eqestneibpi2} 
\left\vert \left( u_{\eps}\circ \varphi_1^{-1} \right) (x) \right\vert \leq C \ln\frac{1}{r \eps}  
\end{equation}
for any $x \in \mathbb{D}_{r^2 \eps^2}^+$ up to the boundary, and
\begin{equation} \label{eqestneibpi} 
\left\vert \left( u_{\eps}\circ \varphi_0^{-1} \right) (x)  \right\vert \leq C \ln\frac{1}{\eps}   
\end{equation}
for any $ x \in \mathbb{D}_{\eps^2}^+$ up to the boundary. 
More precisely,
\begin{equation} \label{eqestneibp0precise} 
\left\vert \left( u_{\eps}\circ \varphi_0^{-1} \right) (x) - u_{\star}(p_0)  \right\vert \leq C \left(\left\| u_{\eps} - u_{\star} \right\|_{W^{1,2}(\Sigma)} + \left\| \nabla \phi_{\eps} \right\|_{L^{2}(F_{\eps})} + \left\vert \sigma_{\eps} - \sigma_{\star}  \right\vert+ \eps + b_{\eps}\ln \frac{1}{\eps} \right)   
\end{equation}
for any $ x \in \mathbb{D}_{\eps^2}^+$ up the boundary, where 
$F_{\eps} = \{(x,y)\in \Omega ; 1-\eps \leq y\leq 1\}$ and
\begin{equation}\label{eqdefbepsi} 
b_{\eps} = \frac{\eps}{\pi}\overline{\phi_y}(1)\hskip.1cm. 
\end{equation}
\end{claim}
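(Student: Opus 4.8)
The plan is to obtain all three estimates by a single mechanism: elliptic estimates on the portion $B_i$ of the thick surface $\Sigma$ near $p_i$, driven by a quantitative bound on the boundary data coming from the eigenvalue equation on $\Sigma_\eps$. First I would split the eigenfunction $u_\eps$ on $\Sigma_\eps$ into its restriction to $\Sigma$ and its restriction $\phi_\eps$ to the cuspidal domain $\Omega_\eps$. On $\Sigma$, $u_\eps$ solves $\Delta u_\eps = 0$ with the Steklov-type condition $\partial_\nu u_\eps = \sigma_\eps u_\eps$ on the part of $\partial\Sigma$ away from the gluing loci, but on the tiny half-disks $\varphi_i(B_i\cap\partial\Sigma)$ of scale $\eps^2$ (resp.\ $r^2\eps^2$) the outward derivative is instead matched to $\partial_z$ of the solution on $\Omega_\eps$ through the transmission condition built into the gluing. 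So the strategy is: (i) control the Dirichlet datum of $u_\eps$ on $\partial B_i$ (a fixed-size curve, away from the cusp) by the $W^{1,2}(\Sigma)$-bound on $u_\eps$, hence by a constant via the trace theorem and the normalization; (ii) control the flux through the small gluing intervals using the energy of $\phi_\eps$ near the mouth of the cusp, i.e.\ $\|\nabla\phi_\eps\|_{L^2(F_\eps)}$ together with the explicit quantity $b_\eps$ in \eqref{eqdefbepsi}; (iii) feed both into the elliptic estimates of \cite{robin} on domains with corners to get the $C^{0,\alpha}$ bound and then evaluate at the point.

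For the crude bounds \eqref{eqestneibpi2} and \eqref{eqestneibpi}, I would not track the limit value at all. Rescale $B_0$ by $\varphi_0$ and then by $\eps^{-2}$, so the small half-disk $\mathbb{D}^+_{\eps^2}$ becomes $\mathbb{D}^+_1$ inside $\mathbb{D}^+_{2/\eps^2}$; the metric is conformal to the flat one with a conformal factor bounded above and below (it is $\eps^4 e^{2\omega_0}$, and $\eps^{-4}$ times it is $e^{2\omega_0}$, uniformly controlled). The function $u_\eps$ is harmonic with respect to the flat metric on this large half-disk, with bounded $L^2$-norm on the far boundary $\partial\mathbb{D}^+_{2/\eps^2}$ arising from the trace on $\partial B_0$, and with a Neumann condition on the flat part of the boundary that is $O(\eps^2)$ (from $\sigma_\eps u_\eps$ times the conformal factor) except on the unit interval where it is matched to the cusp. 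A barrier / maximum-principle argument on the rescaled half-disk, or equivalently the Poisson-type representation, gives an $L^\infty$ bound on $\mathbb{D}^+_1$ of the form (constant) $+$ (logarithm of the size of the domain) $\times$ (flux), and since the domain has radius $\eps^{-2}$ this produces the $\ln\frac{1}{\eps}$ (resp.\ $\ln\frac{1}{r\eps}$ for the $p_1$ chart, whose rescaled radius is $(r^2\eps^2)^{-1}$) factor. The flux itself is bounded because $u_\eps$ is globally $L^2(\partial\Sigma_\eps)$-normalized and $\Delta u_\eps=0$.

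The refined estimate \eqref{eqestneibp0precise} is where the work is, and I expect it to be the main obstacle. Here one wants to compare $u_\eps\circ\varphi_0^{-1}$ not to zero but to $u_\star(p_0)$, where $u_\star$ is the $W^{1,2}$-weak / $L^2(\partial\Sigma)$-strong limit of $u_\eps$. Write $w_\eps = u_\eps - u_\star$ on $\Sigma$; then $w_\eps$ is harmonic, its boundary trace on $\partial B_0$ is controlled by $\|u_\eps - u_\star\|_{W^{1,2}(\Sigma)}$, and on the small gluing interval its normal derivative is governed by the flux mismatch, which I would decompose as the $\sigma_\eps$-versus-$\sigma_\star$ discrepancy (giving the $|\sigma_\eps - \sigma_\star|$ and $\eps$ terms), plus the genuine cusp flux. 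The latter is exactly what \eqref{eqdefbepsi} isolates: integrating the eigenvalue equation on $\Omega_\eps$ (or on $F_\eps$) against the constant and using \eqref{eqgradtheta}, the net flux into the mouth at $y=1$ is $\pi b_\eps/\eps$ times the natural scale, and when this is pushed through the logarithmically large rescaled half-disk it contributes the term $b_\eps\ln\frac{1}{\eps}$; the remaining oscillatory / higher-mode part of the cusp boundary data is absorbed into $\|\nabla\phi_\eps\|_{L^2(F_\eps)}$ via a trace inequality on $F_\eps$. Combining with the crude bound (so that the already-established $\ln\frac1\eps$ control multiplies the small flux rather than a generic constant) and applying the corner elliptic estimate of \cite{robin} to $w_\eps$ on the rescaled chart yields the $C^{0,\alpha}$ bound and hence the pointwise inequality at any $x\in\mathbb{D}^+_{\eps^2}$. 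The delicate point throughout is bookkeeping the two different scales $\eps^2$ and $r^2\eps^2$ and making sure the logarithmic factors are exactly those two rescalings and nothing worse — in particular that the flux contribution really does come with a clean $\ln\frac1\eps$ and that the mean-value extraction in \eqref{eqcompatibilityp0}–\eqref{eqcompatibilityp1} is compatible with the corner regularity.
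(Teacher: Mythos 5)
Your high-level strategy is correct and has all the right ingredients: rescaling to a large half-disk where the Steklov boundary coefficient is $O(\eps^2)$, controlling the Dirichlet datum on $\partial B_i$ by the global $W^{1,2}$ bound and trace theory, identifying the flux through the gluing interval as the source of the logarithmic growth, comparing to $u_\star$ (so that $|\sigma_\eps-\sigma_\star|$ and the $W^{1,2}$-distance show up), and closing with the corner-regularity estimates of \cite{robin}. However, the mechanism you propose for extracting the $\ln\frac{1}{\eps}$ growth with coefficient $b_\eps$ — a barrier/maximum-principle or Poisson-type argument on the rescaled half-disk — is replaced in the paper by a cleaner device: one works with the \emph{circular mean} $m_\eps^i(\rho) = \frac{1}{\pi}\int_0^\pi (u_\eps\circ\varphi_i^{-1})(\rho\cos\theta,\rho\sin\theta)\,d\theta$, which satisfies an explicit first-order ODE in $\rho$ (obtained by integrating the Laplacian over an annulus; the Steklov boundary condition gives the inhomogeneity). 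Integrating this ODE twice produces an exact formula for $m_\eps^i(\rho)$, and the constant of integration coming from the inner boundary flux is precisely $b_\eps = \frac{\eps^2}{2}(m_\eps^0)'(\eps^2/2) = \frac{\eps}{\pi}\overline{\phi_y}(1)$. The refined estimate is then obtained by subtracting the \emph{identical} ODE for $m_\star^0(\rho)$ and tracking each term, giving the clean intermediate estimate $|m_\eps^0(\rho) - m_\star^0(\rho) - b_\eps\ln\rho|\le C(\|u_\eps-u_\star\|_{L^2(\partial\Sigma)} + |\sigma_\eps-\sigma_\star| + \eps^2\ln\frac{1}{\rho})$, after which corner elliptic estimates transfer the mean-value bound to a pointwise bound (picking up the $\|\nabla\phi_\eps\|_{L^2(F_\eps)}$ term as the discrepancy between $v_\eps^0$ and its mean). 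Your barrier/Poisson route should in principle yield the same conclusion, but it is much harder to see how the coefficient of the logarithm comes out to be exactly $b_\eps$ and not some less useful quantity; the circular-averaging ODE makes this transparent, which is exactly what is needed downstream (in \cref{section5}) where $b_\eps$ and hence $\overline{\theta}(0)$ must be controlled precisely. In short: same outline, but the paper's radial-mean ODE is the key technical device that replaces your less explicit barrier argument and is worth internalizing.
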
 

\begin{proof}
We set 
\begin{equation} 
m^i_{\eps}(\rho) = \frac{1}{\pi} \int_{0}^{\pi} \left( u_{\eps}\circ \varphi_i^{-1} \right) \left(\rho \left( \cos \theta , \sin \theta \right)  \right)  d\theta \
\end{equation}
for $i=0,1$, the mean value of $u_{\eps}$ on the arc of radius $\rho$ in the neighbourhood of $p_i$ in the conformal chart $\varphi_i$. 
Since the charts $\varphi_i$ are conformal and thanks to the eigenvalue equation on $\partial\Sigma\setminus \left(A_0 \cup A_1\right)$, we know that
\begin{equation} \label{eqsteklovueps} 
\begin{cases}
 \Delta \left(u_{\eps}\circ\varphi_i^{-1}\right) = 0 &  \text{in} \ \mathbb{D}_2^+
\\
-\partial_{y} \left(u_{\eps}\circ\varphi_0^{-1}\right) = e^{\omega_0} \sigma_{\eps} \left( u_{\eps}\circ\varphi_0^{-1} \right) &  \text{on} \ [-2,2]\times\{0\} \setminus \left([-\frac{\eps^2}{2},\frac{\eps^2}{2}]\times\{0\}\right) \text{ if } i=0 \\
-\partial_{y} \left(u_{\eps}\circ\varphi_1^{-1}\right) = e^{\omega_1} \sigma_{\eps}\left(  u_{\eps}\circ\varphi_1^{-1} \right) &  \text{on} \  [-2,2]\times\{0\} \setminus \left([-\frac{r^2\eps^2}{2},\frac{r^2\eps^2}{2}]\times\{0\}\right) \text{ if } i=1  \\
\end{cases}
 \end{equation}
so that
$$
 - \frac{1}{\rho} \partial_\rho \left(\rho \left(m_\eps^i \right)' \right) = \frac{e^{\omega_i} \sigma_{\eps}}{\pi \rho} \left(u_{\eps}\circ\varphi_i^{-1}(\rho,0) + u_{\eps}\circ\varphi_i^{-1}(-\rho,0) \right)
 $$
for any $ \frac{\eps^2}{2} <\rho \leq 1$ if $i = 0$ and for any $ \frac{r^2\eps^2}{2} <\rho \leq 1$ if $i=1$. 
We integrate this equation to obtain
\begin{equation} \label{eq_green_integrated}
\rho \left(m_{\eps}^i\right)'(\rho) = \left(m_{\eps}^i\right)'(1) -  \frac{\sigma_{\eps}}{\pi} \int_{J_1 \setminus J_\rho}u_{\eps}dl_g 
\end{equation}
where $J_s = \varphi_i^{-1}\left(\left[ -s,s\right]\times \{0\}\right)$. 
Integrating again, we get that
$$ 
m_{\eps}^i(\rho) = m_{\eps}^i(1) + \ln\rho \left(m_{\eps}^i\right)'(1) - \frac{\sigma_{\eps}}{\pi} \int_{1}^{\rho}\frac{1}{s}\left(\int_{J_1 \setminus J_s}u_{\eps}dl_g\right)ds.
$$
Moreover, we have by H{\"o}lder's inequality
that
$$
\left| \int_1^\rho \frac{1}{s} \left( \int_{J_1\setminus J_s} u_\eps dl_\eps \right) ds \right|
\leq
 \int_1^\rho \frac{1}{s} \int_{J_1\setminus J_\rho} |u_\eps| dl_\eps ds 
 \leq
 \left( \ln \frac{1}{\rho} \right) \sqrt{N} \sqrt{L_g(J_1 \setminus J_\rho)},
$$
which then implies by the estimates above that
$$
 \left\vert m_{\eps}^i(\rho) \right\vert \leq \left\vert m_{\eps}^i(1)\right\vert +  \left\vert\left(m_{\eps}^i\right)'(1)\right\vert \left(\ln\frac{1}{\rho}\right) + \frac{\sigma_{\eps}}{\pi}  \sqrt{N}\sqrt{L_g(J_1\setminus J_\rho)} \left(\ln\frac{1}{\rho}\right)\hskip.1cm.
 $$
By standard elliptic theory on $\eqref{eqsteklovueps}$, we know that
$$ \left\vert m_{\eps}^i(1)\right\vert +  \left\vert\left(m_{\eps}^i\right)'(1)\right\vert \leq C \sqrt{N} $$
as $\eps \to 0$ so that we obtain 
\begin{equation}
\left\vert m_{\eps}^i(\rho) \right\vert \leq C\sqrt{N} \left(1 + \ln\frac{1}{\rho}\right)
\end{equation}
for $\rho \geq \eps^2$ if $i=0$ and for $\rho \geq r^2\eps^2$ if $i=1$. 

Let us be more precise for $i=0$. 
We set 
$$
b_{\eps} = \frac{\eps^2}{2} \left(m_{\eps}^0\right)'\left(\frac{\eps^2}{2}\right).
$$ 
Notice that by the computation above (more precisely \eqref{eq_green_integrated}) and a similar application of Green's formula, we have that
\begin{equation}\label{eqdefbepsi2} 
b_{\eps} = \left(m^0_{\eps}\right)'(1) - \frac{\sigma_{\eps}}{\pi}\int_{J_1\setminus J_{\frac{\eps^2}{2}}} u_{\eps} dl_g = \frac{\eps}{\pi}\overline{\phi_y}(1)\hskip.1cm, 
\end{equation}
so that in particular our notation for $b_\eps$ is consistent with \eqref{eqdefbepsi}. 
By integrating \eqref{eqdefbepsi2} as before we then deduce the following formula
\begin{align*} 
m_{\eps}^0(\rho) 
&=
m_\eps^0(1)+\ln \rho\, \left(m_\eps^0\right)'(1) - \frac{\sigma_\eps}{\pi} \int_1^\rho \frac{1}{s} \left( \int_{J_1\setminus J_s} u_\eps dl_g \right) ds
\\
&=
m_\eps^0(1) + b_{\eps} \ln \rho + \ln \rho \frac{\sigma_{\eps}}{\pi}\int_{J_1\setminus J_{\frac{\eps^2}{2}}} u_{\eps} dl_g
 - \frac{\sigma_\eps}{\pi} \int_1^\rho \frac{1}{s} \left( \int_{J_1\setminus J_s} u_\eps dl_g \right) ds
 \\
&=
m_\eps^0(1) + b_{\eps} \ln \rho + \ln \rho \frac{\sigma_{\eps}}{\pi}\int_{J_\rho \setminus J_{\frac{\eps^2}{2}}} u_{\eps} dl_g
+ \frac{\sigma_\eps}{\pi} \int_1^\rho \frac{1}{s} \left( \int_{J_s \setminus J_\rho} u_\eps dl_g \right) ds
\\
&=
m_{\eps}^0(1) + b_{\eps} \ln \rho  + \ln \rho \frac{\sigma_{\eps}}{\pi} \int_{J_{\rho}\setminus J_{\frac{\eps^2}{2}}} u^i_{\eps} dl_g + \frac{\sigma_\eps}{\pi} \int_1^\rho \frac{1}{s} \left( \int_{J_s \setminus J_\rho} u_\eps dl_g \right) ds
 \hskip.1cm.
 \end{align*}
We can do the very same computation for $m_{\star}^0$, the mean value of $u_{\star}$ on the arc of radius $\rho$ in the neighbourhood of $p_0$ in the conformal chart $\varphi_0$, where $u_{\star}\circ \varphi_0^{-1}$ satisfies the same equation with eigenvalue $\sigma_{\star}$ but now also along $J_{\frac{\eps^2}{2}}$ (which does not hold for $u_\eps$).
This gives that
$$ 
m_{\star}^0(\rho) 
= 
m_{\star}^0(1)   +  \ln \rho \frac{\sigma_{\star}}{\pi} \int_{J_{\rho}} u_{\star} dl_g 
+ \frac{\sigma_{\star}}{\pi}  \int_{1}^{\rho}  \frac{1}{s} \left( \int_{J_s \setminus J_\rho} u_\star dl_g \right) ds \hskip.1cm.$$
The difference between $m_{\eps}^0$ and $m_{\star}^0$ gives
\begin{eqnarray*}
m_{\eps}^0(\rho) & = & m_{\star}^0(\rho) + b_{\eps} \ln \rho + \left(m_{\eps}^0 -  m_{\star}^0\right)(1) \\
&  & 
+ \ln \rho \left( \frac{\sigma_{\eps}}{\pi} \int_{J_{\rho}\setminus J_{\frac{\eps^2}{2}}} \left(u_{\eps} - u_{\star}\right) dl_g + \frac{\sigma_{\eps}-\sigma_{\star}}{\pi} \int_{J_{\rho}\setminus J_{\frac{\eps^2}{2}}} u_{\star} dl_g 
-\frac{\sigma_{\star}}{\pi} \int_{ J_{\frac{\eps^2}{2}}} u_{\star} dl_g \right) \\
& & 
+
\frac{\sigma_\eps}{\pi} \int_1^\rho \frac{1}{s} \left( \int_{J_s \setminus J_\rho} (u_\eps - u_\star) dl_g \right) ds
 + \frac{\sigma_{\eps}-\sigma_{\star}}{\pi} \int_1^\rho \frac{1}{s} \left( \int_{J_s \setminus J_\rho} u_\star dl_g \right) ds.
\end{eqnarray*}
Moreover, we can estimate
\begin{align*}
\left| \int_1^\rho \frac{1}{s} \left( \int_{J_s \setminus J_\rho} (u_\eps - u_\star) dl_g \right) ds \right|
&\leq
\int_1^\rho \frac{1}{s} \sqrt{L_g(J_s \setminus J_\rho)} \|u_{\eps} - u_\star\|_{L^2(J_\rho \setminus J_s)} ds
\\
& \leq C \|u_{\eps} - u_\star\|_{L^2(\partial \Sigma)} \int_1^\rho \frac{(s-\rho)^{1/2}}{s} ds 
\\
&\leq 
C \|u_{\eps} - u_\star\|_{L^2(\partial \Sigma)}
\end{align*}
and similarly for the last term in the difference of the mean values above.
Therefore, we get by standard elliptic estimates on a compact subset of $\Sigma\setminus\{p_0,p_1\}$ and H\"older's inequality that there is a constant $C$ independent of $\eps$ and $\rho$ such that
\begin{equation} \label{ineqmeanvaluemeps} 
\left\vert m_{\eps}^0(\rho) - m_{\star}^0(\rho) - b_{\eps}\ln\rho \right\vert \leq C \left(\left\| u_{\eps} - u_{\star} \right\|_{L^{2}(\partial\Sigma)} + \left\vert \sigma_{\eps} - \sigma_{\star}  \right\vert+ \eps^2 \ln \frac{1}{\rho}\right) 
\end{equation}
for any $\rho \in \mathbb{D} \setminus \mathbb{D}_{\frac{\eps^2}{2}}$.

Now we look at $u_{\eps}$ in the charts $f_i$ and $g_i$ that we used to define the gluing between $\Omega_{\eps}$ and $\Sigma$ at the neighborhood of $p_i$ for $i=0,1$. We denote by $v_{\eps}^i$ the functions in these charts and the equation \eqref{eqsteklovueps} becomes
\begin{equation} \label{eqsteklovueps2} \begin{cases}
 \Delta v_{\eps}^0 = 0 &  \text{in} \ \mathbb{D}_\frac{2}{\eps^2}^+ \cup E_0
\\
-\partial_{y} v_{\eps}^0 = \eps^2 e^{\omega_0(\eps^2 (x,z))} \sigma_{\eps} v^0_{\eps} &  \text{in} \ \left[-\frac{2}{\eps^2},\frac{2}{\eps^2}\right]\times\{0\} \setminus \left( \left[-\frac{1}{2},\frac{1}{2}\right]\times\{0\}\right) \\
\partial_{\nu_0^{\pm}} v_{\eps}^0 = \frac{\eps^2}{t_{\eps}} \sigma_{\eps} v_{\eps}^0 &  \text{if} \ \frac{r-1}{\eps} \leq z\leq 0 \text{ and }   x = \pm \frac{\left(1+\eps z\right)^2}{2}  \\
\Delta v_{\eps}^1 = 0 &  \text{in} \ \mathbb{D}_\frac{2}{r^2\eps^2}^+ \cup E_1
\\
-\partial_{y} v_{\eps}^1 = r^2\eps^2 e^{\omega_1(r^2\eps^2 (x,z))} \sigma_{\eps} v_{\eps}^1 &  \text{in} \ \left[-\frac{2}{r^2\eps^2},\frac{2}{r^2\eps^2}\right]\times\{0\} \setminus \left( \left[-\frac{1}{2},\frac{1}{2}\right]\times\{0\}\right) \\
\partial_{\nu_1^{\pm}} v_{\eps}^1 = \frac{r^2\eps^2}{t_{\eps}} \sigma_{\eps} v_{\eps}^1 &  \text{if} \  \frac{r-1}{r^{2}\eps} \leq z\leq 0 \text{ and }   x = \pm \frac{\left(1-r\eps z\right)^2}{2}  \\
\end{cases} \end{equation}
where $E_i$ for $i=0,1$, defined by \eqref{eqdefE0} and \eqref{eqdefE1}, are the image of $\Omega_{\eps}$ under the charts $g_i$, and $\nu_i^{\pm}$ is the outpointing normal on the boundaries of $E_i$ endowed with the flat metric. By elliptic regularity on domains with corners \cite{robin}, we know that $v_{\eps}^i \in \mathcal{C}^{0,\alpha}$ and we have the estimates
$$  
\left\| v_{\eps}^0 -  m_{\eps}^0(\eps^2) \right\|_{\mathcal{C}^{k}(F_0)} \leq C\left( \eps + 1 \right)
$$
$$ 
\left\| v_{\eps}^1 -  m_{\eps}^1(r^2 \eps^2) \right\|_{\mathcal{C}^{k}(F_1)} \leq C\left( r \eps + 1 \right)
$$
for some constant $C$ as soon as $F_i$ is a bounded set at the neighborhood of $(0,0)$. Therefore, \eqref{eqestneibpi} and \eqref{eqestneibpi2} hold true. The energy which appears in the right-hand term of the pointwise estimate for $i=0$ gives the precise estimate \eqref{eqestneibp0precise}.
\end{proof}

\section{Asymptotic expansion on the first eigenvalue and first eigenfunction} \label{section5}

In this section we prove our main technical tool, a precise asymptotic expansion of an eigenfunction on the cuspidal domain with control on boundary values given by \cref{claimpointwise}.

\subsection{Preliminary computations}

Let $u_{\eps}$ be an eigenfunction associated to $\sigma^1_{\eps}$ with unit norm, that is $\int_{\partial\Sigma_{\eps}} u_{\eps}^2 dl_{\eps} = 1$. 
Integrating the equation satisfied by $u_{\eps}$, we get that
\begin{equation} \label{eqenergyeigenfunc} 
\int_{\Sigma} \left\vert \nabla u_{\eps} \right\vert^2_g dA_g - \sigma_{\eps} N + \int_{\Omega_{\eps}} \left\vert \nabla \phi_{\eps} \right\vert^2_{g_{\eps}} dA_{\eps} =  \sigma_{\eps} M \hskip.1cm, 
\end{equation}
where we write $\phi_{\eps}=\left. u_\eps \right|_{\Omega_\eps}$ for the eigenfunction $u_{\eps}$ in the chart $\Omega_{\eps}$ of $\Sigma_{\eps}$, 
$$
M = \int_{I^+ \cup I^-} \phi_{\eps}^2 dl_{\eps}   \text{ and }  N = \int_{\partial\Sigma\setminus \left(A_0 \cup A_1\right)} u_{\eps}^2 dl_{g}
$$
are the boundary masses of the eigenfunctions on the cuspidal domain and on the surface where 
$$
A_0 = \varphi_0^{-1}\left(\mathbb{D}_{\eps^2}^+ \right) \cap \partial{\Sigma}
\hspace{0.2cm} \text{and} \hspace{0.2cm}
A_1 = \varphi_1^{-1}\left(\mathbb{D}_{r^2\eps^2}^+ \right) \cap \partial\Sigma.
$$ 
Notice that by assumption $M+N = 1$. 
We define a new function $\theta$ by the change of variables \eqref{eqdeftheta}, and thanks to \eqref{eqgradtheta} we can rewrite the gradient term over
$\Omega_\eps$ so that
 \eqref{eqenergyeigenfunc} becomes 
\begin{equation} \label{eqenergyeigenfunc2} 
\frac{\delta_{\eps}}{t_{\eps}} + \frac{1}{\eps^2}  \left\| \theta_x \right\|_{L^2(\widetilde{\Omega})}^2   +  \int_0^1 \overline{ \left( \frac{\theta}{2} + \frac{\theta_v}{\ln \frac{1}{r}} \right)^2 } dv =  \frac{\sigma_{\eps}}{t_{\eps}} M \hskip.1cm, 
\end{equation}
where
\begin{equation} \label{eqdefdeltaeigen} 
\delta_{\eps} = \int_{\Sigma} \left\vert \nabla u_{\eps} \right\vert^2_g dA_g - \sigma_{\eps} N 
\end{equation}
and we have that
$$  
\int_0^1 \overline{ \left( \frac{\theta}{2} + \frac{\theta_v}{\ln \frac{1}{r}} \right)^2 } dv = \int_0^1 \left(\frac{\overline{\theta^2}}{4} + \frac{\overline{\theta\theta_v}}{\ln \frac{1}{r} } +  \frac{\overline{\theta_v^2}}{\left( \ln\frac{1}{r} \right)^2} \right)dv \hskip.1cm,
$$
so that \eqref{eqenergyeigenfunc2} becomes
\begin{equation} \label{eqenergyeigenfunc3} 
M \left(\frac{\sigma_{\eps}}{t_{\eps}} - \frac{1}{8}\right)(1+O(\eps^2)) =  \frac{\delta_{\eps}}{t_{\eps}} + \frac{\left\| \theta_x \right\|_{L^2(\widetilde{\Omega})}^2}{\eps^2}  + \frac{1}{\left(\ln r\right)^2}\int_0^1 \overline{\theta_v^2} dv + I_1 + I_2  \hskip.1cm,
\end{equation}
where 
$$ 
I_1 = \frac{1}{8} \int_0^1 \left( 2\overline{\theta^2} - \theta^2\left(\frac{r^{2v}}{2},v\right) - \theta^2\left(-\frac{r^{2v}}{2},v\right) \right) dv
$$
and 
$$ I_2 = \frac{1}{\ln \frac{1}{r}}   \int_0^1 \overline{\theta\theta_v} dv \hskip.1cm.$$
We have that 
$$ 
\overline{\theta\theta_v} = \overline{\left(\theta - \overline{\theta}\right)\theta_v} +  \overline{\bar{\theta} \theta_v} = \overline{\left(\theta - \overline{\theta}\right)\theta_v} + \overline{\theta} \cdot  \overline{ \theta_v} \hskip.1cm.
$$
But since $\left(\overline{\theta}\right)'= \overline{ \theta_v} + \ln\left(\frac{1}{r}\right) \left( 2\overline{\theta} - \theta(\frac{r^{2v}}{2},v) - \theta(-\frac{r^{2v}}{2},v)\right) $ 
and 
$$
2 \int_0^1 \overline{\theta}\left(\overline{\theta}\right)'  dv = \overline{\theta}^2(1) - \overline{\theta}^2(0) \hskip.1cm,
$$ 
we get 
$$
I_2 = \frac{1}{2\ln \frac{1}{r}}\left(\overline{\theta}^2(1) - \overline{\theta}^2(0)\right) + I_3 \hskip.1cm,
$$ 
where
$$ 
I_3 = \frac{1}{\ln\frac{1}{r}} \int_0^1 \overline{\left(\theta - \overline{\theta}\right)\theta_v}dv -   \int_0^1 \overline{\theta} \left( 2\overline{\theta} - \theta\left(\frac{r^{2v}}{2},v\right) - \theta\left(-\frac{r^{2v}}{2},v\right)\right)dv \hskip.1cm.
$$
Let $-\frac{r^{2v}}{2} \leq x \leq \frac{r^{2v}}{2} $, then
\begin{equation} \label{eqestthetalinex} 
\left\vert \theta(x,v) - \bar\theta(v) \right\vert 
=  
r^{-2v}  \left\vert  \int_{-\frac{r^{2v}}{2}}^{\frac{r^{2v}}{2}}\left(\int_{x}^{s} \theta_t(t,v)dt\right) ds \right\vert  
\leq 
\left(r^{4v}\overline{\theta_x^2}(v)\right)^{\frac{1}{2}} \hskip.1cm. 
\end{equation}
Note that since $r^{2v} \leq 1$ this implies that we also have that
\begin{align} \label{eq_bar_theta_l2}
\| \bar \theta \|_{L^2(0,1)} \leq \left\| \bar \theta - \theta \left( \frac{r^{2v}}{2},v\right) \right\|_{L^2(0,1)} + \sqrt{M}
\leq \| \theta_x \|_{L^2(\tilde \Omega)} + \sqrt{M}.
\end{align}
Similarly, discarding $r^{2v} \leq 1$ again, thanks to H{\"o}lder's inequality,
\begin{align*}
\left\vert  I_1 \right\vert 
&\leq 
\frac{1}{8} \left( \left\| \bar\theta + \theta\left(\frac{r^{2v}}{2},v\right) \right\|_{L^2(0,1)} + \left\| \bar\theta + \theta\left(\frac{-r^{2v}}{2},v\right) \right\|_{L^2(0,1)} \right)  \left\| \theta_x \right\|_{L^2(\widetilde{\Omega})}
\\
& \leq
\frac{1}{2} ( \left\| \theta_x \right\|_{L^2(\widetilde{\Omega})} + \sqrt{M} ) \left\| \theta_x \right\|_{L^2(\widetilde{\Omega})},
\end{align*}
and
\begin{align*}
\int_0^1 \left| \overline{(\theta-\overline{\theta}) \theta_v} \right| \, dv
& \leq
\int_0^1 \int_{-\frac{r^{2v}}{2}}^{\frac{r^{2v}}{2}} \overline{\theta_x^2}^{\frac{1}{2}} \theta_v \, dx dv
\\
&\leq
\left(\int_0^1 \int_{-\frac{r^{2v}}{2}}^{\frac{r^{2v}}{2}} \overline{\theta_x^2} \, dx dv \right)^{1/2}
\left(\int_0^1 \int_{-\frac{r^{2v}}{2}}^{\frac{r^{2v}}{2}} \theta_v^2 \, dx dv \right)^{1/2}
\\
& \leq
\| \theta_x \|_{L^2(\tilde \Omega)} \left(\int_0^1 r^{2v}\overline {\theta_v^2} \, dv \right)^{1/2}.
\end{align*}
After discarding $r^{2v} \leq 1$ in the line above we then find from the above estimates that
$$
 \left\vert I_3 \right\vert 
 \leq 
 \frac{1}{\ln\frac{1}{r}} \left(\int_0^1 \overline{\theta_v^2}dv\right)^{1/2} \left\| \theta_x \right\|_{L^2(\widetilde{\Omega})} +   2 \left\| \bar\theta\right\|_{L^2(0,1)} \left\| \theta_x \right\|_{L^2(\widetilde{\Omega})} \hskip.1cm.$$
Gathering our estimates, using again that $\left\| \theta_x \right\|_{L^2(\widetilde{\Omega})} \leq \eps$ by \eqref{eqenergyeigenfunc3},
we then have thanks to Young's inequality that
\begin{equation} \label{eqestimateerrore} 
e_{\eps} = \left\vert  I_1 \right\vert + \left\vert  I_3 \right\vert 
= 
O\left( \frac{ \left\| \theta_x \right\|_{L^2(\widetilde{\Omega})}^2}{\eta_{\eps}} +  \eta_{\eps}\frac{\left\|\overline{\theta_v^2} \right\|_{L^1(0,1)}}{\left(\ln r\right)^2} + \eta_{\eps}  M   \right) \hskip.1cm
\end{equation}
 for any $\eta_{\eps} \in (0,1]$.
Now, we can rewrite \eqref{eqenergyeigenfunc3} as follows
\begin{equation} \label{eqenergyeigenfunc4}
M \left(\frac{\sigma_{\eps}}{t_{\eps}} - \frac{1}{8}\right) =  \frac{\delta_{\eps}}{t_{\eps}}  +  \frac{\left\| \theta_x \right\|_{L^2(\widetilde{\Omega})}^2}{\eps^2} +\frac{1}{\left(\ln r\right)^2}\int_0^1 \overline{\theta_v^2} dv + \frac{1}{2 \ln \frac{1}{r}}\left(\overline{\theta}^2(1) - \overline{\theta}^2(0)\right) + O(e_{\eps} + \eps^2 M) \hskip.1cm. 
\end{equation}
The formula \eqref{eqenergyeigenfunc4} gives the main connection between the behaviour of the eigenfunction $u_{\eps}$ on the thick part $\Sigma$ and $\Omega_{\eps}$ the thin part.

\bigskip

We now incorporate our pointwise estimate \cref{claimpointwise} into \eqref{eqenergyeigenfunc4}.
Notice that by \eqref{eqestneibpi2} at the neighbourhood of $p_1$ and by \eqref{eqcompatibilityp1}, we have that
$$
\frac{\overline{\theta}^2(1)}{2\ln\frac{1}{r}} =  \frac{r \eps}{t_{\eps}} \frac{\overline{\phi}(1)^2}{2} = O\left(r\eps\left(\ln \frac{1}{r\eps}\right)^2\right) 
$$
and  \eqref{eqenergyeigenfunc4} becomes
\begin{equation} \label{eqenergyeigenfunc5}
M \left(\frac{\sigma_{\eps}}{t_{\eps}} - \frac{1}{8}\right) =  \frac{\delta_{\eps}}{t_{\eps}}  +  \frac{\left\| \theta_x \right\|_{L^2(\widetilde{\Omega})}^2}{\eps^2} +\frac{1}{\left(\ln r\right)^2}\int_0^1 \overline{\theta_v^2} dv - \frac{\overline{\theta}^2(0)}{2 \ln \frac{1}{r}} + O\left(e_{\eps}+ \eps^2 M + r\eps\left(\ln \frac{1}{r\eps}\right)^2 \right) \hskip.1cm. 
\end{equation}
Since $L_g(\partial \Sigma)=1$ by the Poincar\'e trace inequality, we have that
$$
\sigma_\star \left(N - \left(\int_{\partial \Sigma}  u_\eps dl_g \right)^2  \right)
\leq
\sigma_\star \left( \int_{\partial \Sigma}  u_\eps^2 dl_g - \left(\int_{\partial \Sigma}  u_\eps dl_g \right)^2  \right)
\leq \int_\Sigma |\nabla u_\eps|^2 dA_g,
$$
where we recall that $\sigma_{\star}$ is the first non-zero Steklov eigenvalue on $\Sigma$. 
This immediately implies that $\delta_{\eps}$ defined by \eqref{eqdefdeltaeigen} can be estimated according to
\begin{equation} \label{eqgapsigmastarsigmaeps}
N \left( \sigma_{\star} - \sigma_{\eps} \right) \leq \delta_{\eps} + \sigma_{\star} \left(\int_{\partial\Sigma} u_{\eps} dA_g\right)^2.
\end{equation}
Since $u_{\eps}$ has zero mean value on $\partial \Sigma_{\eps}$, we have that
$$ 
\left(\int_{\partial\Sigma \setminus \left(A_0 \cup A_1\right)} u_{\eps} dl_g\right)^2 =  \left(\int_{I^+ \cup I^-} \phi_{\eps} dl_{\eps}\right)^2.
$$ 
Moreover, we have thanks to \cref{claimpointwise} that
$$
\int_{ A_0 \cup A_1} u_{\eps} dl_g = O\left(\eps^2 \ln \eps \right)
$$
so that we find
\begin{equation} \label{eqgapsigmastarsigmaeps2}  
\left( \sigma_{\star} - \sigma_{\eps} \right) \leq \frac{\delta_{\eps}}{N} + 2\frac{\sigma_{\star}}{N}\left(  \left( \int_{I^+ \cup I^-} \phi_{\eps} dl_{\eps}  \right)^2 + O\left(\eps^4 \left(\ln\eps\right)^2\right) \right)  \hskip.1cm,
\end{equation}
where we also remark that
$$
\left( \int_{I^+ \cup I^-} \phi_{\eps} dl_{\eps}  \right)^2 = O(M \eps)
$$
thanks to H{\"o}lder's inequality.

The first idea is then to make $\delta_{\eps}$ as small as possible in order to have the expected inequality.

\subsection{Choice of the parameter $t_\eps$}

We now aim at choosing the adapted parameters $t_{\eps}$ (near $t_\star$) and $r_{\eps}$ for which we we have a chance to minimize $\delta_{\eps}$.
We choose 
$$
r_{\eps} = \exp\left(-\frac{1}{\eps^{\alpha}}\right)
$$
for $0<\alpha<\frac{1}{2}$ and.
Let us also introduce the parameter
$$
t_\star = 8 \sigma_\star.
$$

The following claim is our tool to make a good choice for $t_\eps$.

\begin{claim} \label{claimchoiceofM}
Let $\alpha$ and $r_\eps$ be as above and 
$2\alpha < \tau < 1 $.  
For any $\eta>0$, there is $\eps_0 >0$ such that for any $0<\eps <\eps_{0}$ and any $\eps^{1-\tau}<  \xi_{\eps} < 1 - \eta $, there is $t_{\eps}>0$
(converging to $t_\star$ as $\eps \to 0$) such that $M = \xi_{\eps} $ for some first eigenfunction $u_\eps$.
\end{claim}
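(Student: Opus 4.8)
The statement asserts that by tuning the dilation parameter $t_\eps$ near $t_\star = 8\sigma_\star$, we can prescribe the boundary mass $M = \int_{I^+\cup I^-}\phi_\eps^2\,dl_\eps$ of a first eigenfunction to be any target value $\xi_\eps$ in the stated range. The natural mechanism is a continuity/intermediate-value argument in $t$. First I would make precise how $\sigma_\eps^1$ and its eigenfunctions depend on $t$: the quadratic form on $\Sigma_\eps$ with parameter $t$ varies analytically (or at least continuously) in $t$, so $\sigma_\eps^1(t)$ is continuous in $t$, and one can extract eigenfunctions depending continuously on $t$ away from crossings. The quantity to control is $M(t)$, the portion of the (unit) boundary mass of $u_\eps$ lying on the thin part $\Omega_\eps$.

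\textbf{Identifying the two regimes.} The key heuristic is the dichotomy already visible in the upper bound \eqref{ineqsigmaeps}: the first eigenvalue is $\approx \min\{\sigma_\star,\ \tfrac{t}{8} + \tfrac{t\pi^2}{2(\ln r)^2}\}$. When $t$ is small (so $\tfrac t8$ is well below $\sigma_\star$), the eigenfunction should essentially live on the thin part, forcing $M(t)$ close to $1$; when $t$ is large (so $\tfrac t8 > \sigma_\star$), the eigenfunction should essentially live on the thick part, forcing $M(t)$ close to $0$. So I would first establish these two one-sided bounds: (i) for $t$ somewhat below $t_\star$, using the test function $\phi$ from the proof of the Claim above (which is supported on $\Omega_\eps$) one gets $\sigma_\eps^1(t) \le \tfrac t8 + O(\cdots) < \sigma_\star - c$, and then an argument showing any eigenfunction of such a small eigenvalue must concentrate its boundary mass on $\Omega_\eps$, i.e.\ $M(t) \to 1$; (ii) for $t$ somewhat above $t_\star$, the variational characterization shows $\sigma_\eps^1(t)$ stays close to $\sigma_\star$, and \eqref{eqenergyeigenfunc4} forces $M(t)(\tfrac{\sigma_\eps}{t}-\tfrac18) \ge$ (nonnegative terms $+$ small error), so since $\tfrac{\sigma_\eps}{t} - \tfrac 18 < 0$ in this range we must have $M(t)$ small — indeed $M(t) \to 0$. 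The precise thresholds for $t$ come from choosing $\eps^{1-\tau} < \xi_\eps < 1-\eta$ and matching the $O(\cdot)$ error terms in \eqref{ineqsigmaeps} and \eqref{eqenergyeigenfunc5}: the lower endpoint $\eps^{1-\tau}$ is exactly what is needed so the error term $O(e_\eps + \eps^2 M + r\eps(\ln\tfrac1{r\eps})^2)$ in \eqref{eqenergyeigenfunc5} is negligible compared to $M$, and the choice $2\alpha < \tau$ ensures $r_\eps = \exp(-\eps^{-\alpha})$ decays fast enough.

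\textbf{The continuity step and conclusion.} With the two endpoint estimates $M(t) > 1-\eta$ for some $t = t_- < t_\star$ and $M(t) < \eps^{1-\tau}$ for some $t = t_+ > t_\star$ in hand, I would invoke continuity of $t \mapsto M(t)$ along a continuous branch of first eigenfunctions on $[t_-, t_+]$ and apply the intermediate value theorem to produce $t_\eps \in (t_-,t_+)$ with $M(t_\eps) = \xi_\eps$. One then needs to check $t_\eps \to t_\star$: since for any fixed gap around $t_\star$ the endpoint arguments show $M$ is either near $1$ or near $0$ outside a shrinking neighborhood of $t_\star$, any $t$ with $M(t) = \xi_\eps \in (\eps^{1-\tau}, 1-\eta)$ must lie in that neighborhood, giving $t_\eps \to t_\star$.

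\textbf{Main obstacle.} The delicate point is the continuity/selection of eigenfunctions and of $M(t)$: the first eigenvalue $\sigma_\eps^1$ could be non-simple, or could cross other eigenvalues as $t$ varies, so "the" first eigenfunction is not canonically defined and $M(t)$ need not be single-valued or continuous in the naive sense. I would handle this by working with the eigenvalue as a function of $t$ (which is continuous, even locally Lipschitz, by min-max) and either (a) perturbing to reduce to the simple case, or (b) interpreting $M(t)$ set-valued and using a connectedness argument on the graph $\{(t, M) : M = \int_{\Omega_\eps}\phi_\eps^2\, dl_\eps \text{ for some unit } \sigma_\eps^1(t)\text{-eigenfunction}\}$, which is closed and whose projection to the $t$-axis is all of $[t_-,t_+]$; since the fibers over the endpoints are in $(1-\eta,1]$ and $[0,\eps^{1-\tau})$ respectively, connectedness of this compact set forces some fiber to contain $\xi_\eps$. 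The second, more robust route is what I expect to use, so that no genericity assumption on $t$ is needed. The remaining work — plugging the endpoint choices of $t$ into \eqref{ineqsigmaeps}, \eqref{eqenergyeigenfunc4}, \eqref{eqgapsigmastarsigmaeps2} and tracking that all error terms beat $\xi_\eps$ in the prescribed window — is bookkeeping with the estimates already derived.
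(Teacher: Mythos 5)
Your plan matches the paper's proof in its essentials: identify the two regimes from the dichotomy in \eqref{ineqsigmaeps}, establish endpoint estimates at fixed $t_0 < t_\star$ and $t_1 > t_\star$, and close with an intermediate-value argument in $t$. The paper proves $\tilde M(t_0) \to 1$ by weak compactness in $W^{1,2}(\Sigma)$ (any limit would be a Steklov eigenfunction with eigenvalue $t_0/8 < \sigma_\star$, forcing it to vanish) and $\tilde M(t_1) \le \eps^{1-\tau}$ by a contradiction argument feeding the mass lower bound through \eqref{eqenergyeigenfunc5} and \eqref{eqgapsigmastarsigmaeps2} to get $\sigma^1_\eps \approx t_1/8 > \sigma_\star$, contradicting \eqref{ineqsigmaeps}. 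The one point where you diverge, and where you were right to worry, is the selection issue when $\sigma^1_\eps(t)$ is not simple. The paper resolves it by defining $\tilde M(t)$ as the \emph{infimum} of $\int_{I^\pm}|u|^2\,dl_\eps$ over unit $\sigma^1_\eps(t)$-eigenfunctions, setting $t_\eps = \sup\{t : \tilde M(t) > \xi\}$, extracting eigenfunctions along $r_n \nearrow t_\eps$ and $s_n \searrow t_\eps$ whose $M$-values bracket $\xi$, and finally interpolating linearly inside the eigenspace at $t_\eps$. Your set-valued graph/connectedness route is a clean abstraction of the same device: fibers are connected (the map $u \mapsto \int_{I^\pm}|u|^2$ is continuous on the unit sphere of the eigenspace), $G$ is closed by the same compactness the paper uses, and a compact set over a connected base with connected fibers is connected, so the $M$-projection is an interval bridging the two endpoint bounds. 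Both versions rest on identical analytic inputs, so neither is more general; yours is arguably a bit more conceptual, the paper's a bit more hands-on. One small caution about phrasing: at $t_1 > t_\star$ the paper only shows the \emph{infimum} of $M$ over eigenfunctions is $\le \eps^{1-\tau}$, so writing ``$M(t)\to 0$'' as if $M$ were single-valued is inaccurate — but the set-valued framing you adopt for the closing step already handles this correctly.
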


\begin{rem}
We have not ruled out the possibility that the first eigenvalue of $\Sigma_\eps$ has multiplicity.
In particular note that the assignment $t \mapsto M$ might not be a well-defined map but depends on the choice of a normalized first eigenfunction.
\end{rem}

\begin{proof}
We define a function $\tilde M \colon [t_0,t_1] \to [0,1]$ as follows.
 $$
 \tilde M = \inf_{\{u \in E_1 \ : \ \|u\|_{L^2(\partial \Sigma_\eps)}=1\}} \int_{I^+_\eps \cup I^-_\eps} |u|^2\, dl_\eps
 $$
 where $E_1$ denotes the space of $\sigma_\eps^1$-eigenfunctions (note that all of this depends on the parameter $t$ as well).
 
In a first step we argue that 
$$
 \tilde M(t_1) \leq \eps^{1-\tau}
$$
for $t_1>t_\star$ fixed and $\eps \leq \eps_0$.

If for $t_1>t_{\star}$, we have $\tilde M(t)> \eps^{1-\tau}$ along a sequence $\eps_n \to 0$, 
then
$$
\frac{\left(\ln r\right)^2 \eps \overline{\phi}(1)^2}{2M} \leq C \eps^{\tau - 2\alpha}  \left(\ln\eps\right)^2 \to 0
$$
for some constant $C$ as $\eps \to 0$ by \eqref{eqestneibpi} and \eqref{eqcompatibilityp0}. 
Using also the inequality \eqref{eqgapsigmastarsigmaeps2}, where $\alpha< \frac{1}{2}$, combined with \eqref{ineqsigmaeps}, we find from \eqref{eqenergyeigenfunc5}
that $\left(\ln r\right)^2\left(\frac{\sigma^1_{\eps}}{t_{\eps}} - \frac{1}{8} \right)$ is bounded from below. 
It is clear by \eqref{ineqsigmaeps} that it is also bounded from above by $\pi^2$. 
Therefore, it converges up to a subsequence to a constant $\lambda \in [0,\pi^2]$.

Therefore, we have that for any $t$ in a compact neighbourhood of $t_{\star}$,
$$
 \sigma_{\eps}^1 = t \left( \frac{1}{8} + \frac{\lambda}{2 \left(\ln r\right)^2} + o\left(\frac{1}{ \left(\ln r\right)^2}\right) \right) \hskip.1cm.
 $$
which is impossible by the eigenvalue bound from \eqref{ineqsigmaeps} for $\eps$ sufficiently small.

Next, we claim that 
 \begin{align} \label{eq_tilde_M_bd_1}
\lim_{\eps \to 0} \tilde M (t_0) = 1.
 \end{align}
For $t_0 < t_\star$ fixed we take $u_\eps$ a normalized first eigenfunction.
We have that $u_{\eps}$ is bounded in $W^{1,2}(\Sigma)$ since
$$
\int_\Sigma |\nabla u_\eps| \leq \sigma_\eps^1 \leq \sigma_\star + o(1)
$$
thanks to \eqref{ineqsigmaeps}, and
$$
\int_{\partial \Sigma} |u_\eps|^2 \leq N + C \eps^2 \left(\ln \eps\right)^2
$$
by \eqref{eqestneibpi}.
Therefore, it follows from standard Sobolev trace theory that  $(u_{\eps})$ is bounded $W^{1,2}(\Sigma)$.
Hence, after potentially taking a subsequence, 
$u_\eps$ converges weakly in $W^{1,2}(\Sigma)$ and strongly in $L^2(\partial{\Sigma})$ to a function $u_0$ on $\Sigma$. 
Moreover, by ellipticity of the eigenvalue equation, $u_{\eps}$ converges in $C^2_{loc}(\Sigma\setminus \{p_0,p_1\})$ to $u_0$. 
Therefore, $u_0$ satisfies a Steklov eigenvalue equation on $\Sigma$ with eigenvalue $\lim_{\eps\to 0} \sigma_{\eps}^1$. 
Thanks to the inequality \eqref{eqineqsigmaeps}, we know that 
$
\lim_{\eps\to 0} \sigma^1_{\eps}\leq \frac{t}{8}.
$
Moreover, using that $\frac{\overline{\theta}(0)^2}{\ln \frac{1}{r}} = O(\eps\left(\ln\eps\right)^2)$ as $\eps\to 0$, that $M+N = 1$,
and the definition of $\delta_{\eps}$ in formula \eqref{eqenergyeigenfunc5}, 
we get that $\lim_{\eps \to 0} \sigma_{\eps}^1 \geq \frac{t}{8}$. 
Therefore, we arrive at
$$
\lim_{\eps\to 0} \sigma_{\eps}^1 = \frac{t}{8} \in (0,\sigma_\star),
$$
where $\sigma_{\star}$ is the first non-zero Steklov eigenvalue on $\Sigma$ and we conclude that $u_0 =0$. 
By strong convergence in $L^2(\partial \Sigma)$ we then have that $\lim_{\eps\to 0} N = 0$ so that $\lim_{\eps\to 0} M = 1$. 
Since the argument applies to any sequence of eigenfunctions we conclude \eqref{eq_tilde_M_bd_1}.

We can now decrease $\eps_0>0$ if necessary such that we have
$$
\tilde M(t_1) \leq \eps^{1-\tau} < \xi < 1-\eta \leq \tilde M(t_0)
$$
for any $\eps \in (0,\eps_0]$.
In particular, 
$$
t_\eps = \sup\{ t \in [t_0,t_1] \ : \ \tilde M > \xi \} \in (t_0,t_1)
$$
is well-defined for $\eps \in (0,\eps_0]$. 
We claim that for $t_\eps$ we can find a first eigenfunction on $\Sigma_\eps$ as desired.

In fact, by construction, we find sequences $r_n \nearrow t_\eps$ and $s_n \searrow t_\eps$ and associated normalized first eigenfunctions $v_n$ and $w_n$ respectively with
$$
\int_{I_\eps^+\cup I_\eps^-} |v_n|^2 \, dl_\eps \leq \xi \leq \int_{I_\eps^+\cup I_\eps^-} |v_n|^2 \, dl_\eps.
$$
Thanks to \eqref{ineqsigmaeps} and the compact emedding $W^{1,2}(\Sigma_\eps) \to L^2(\partial \Sigma_\eps)$ (note that this 
holds also along a sequence of parameters $t$ since all the norms are uniformly equivalent for different choices of $t \in [t_0,t_1]$ 
and $\eps>0$ fixed) we may assume $v_n \to v$ and $w_n \to w$ 
weakly in $W^{1,2}(\Sigma_\eps)$ and strongly in $L^2(\partial \Sigma_\eps)$.
We now have two options: If $v= \pm w$ we are done immediately.
In any other case we can easily find a linear combination of $v$ and $w$ with the desired property.
\end{proof}

\subsection{The asymptotic expansion}
We now provide the precise asymptotic expansion of the first eigenfunction on the cuspidal domain.

\bigskip

Recall our choice of $2\alpha < \tau < 1 $ and $r_{\eps} = \exp\left(-\frac{1}{\eps^{\alpha}}\right)$.
Moreover, thanks to \cref{claimchoiceofM} we may assume that $t_{\eps}$ is such that $M \geq \eps^{1-\tau}$ for some first eigenfunction.
All the results in this section refer to this specific eigenfunction.

\bigskip

Now, we aim at studying the convergence properties of $\bar{\theta}$. We first focus on the function $\theta$ defined on $\widetilde{\Omega}$. 
We know that $\phi$ satisfies

\begin{equation} \label{eqsteklovphi} \begin{cases}
\epsilon^4 \Delta_{g_{\epsilon}} \phi := \phi_{xx} +\epsilon^{2}\phi_{yy} = 0 &  \text{in} \ \Omega_{\eps}
\\
\frac{\partial_{\nu^{\pm}_{\epsilon}} \phi}{t_{\eps}} := \frac{\pm \phi_x - \eps^2 y \phi_y}{\eps^2 \sqrt{1+\eps^2y^2}}  = \frac{\sigma_{\eps}}{t_{\eps}} \phi &  \text{on} \ I_{\eps}^{\pm} \\
\end{cases} 
\end{equation}
where $I_{\eps}^{\pm} = \{ (x,y) \in \mathbb{R}^2 ; r_{\eps}\leq y\leq 1 ,  y = \pm \frac{x^2}{2} \}$ and 
$\nu^{\pm}_{\eps} = t_{\eps}\frac{\left(\pm 1, - \eps^2 y \right)}{\eps^2 \sqrt{1+\eps^2y^2}}$ is the outward pointing normal along $I_{\eps}^{\pm}$ with respect to $g_{\eps}$. 
Therefore, $\theta$ satisfies the following equation
\begin{equation} \label{eqsteklovontheta} 
\begin{cases}
r^{2v}\theta_{xx} +\epsilon^{2} \left( \frac{3\theta}{4} + \frac{2\theta_v}{\ln  \frac{1}{r} } +  \frac{\theta_{vv}}{\left(\ln\frac{1}{r} \right)^2} \right) = 0 &  \text{in} \ \widetilde{\Omega}
\\
\pm \theta_x + \eps^2 \left( \frac{\theta}{2} + \frac{\theta_v}{\ln\frac{1}{r} }\right)  = \eps^2 \sqrt{1+\eps^2 r^{2v}} \frac{\sigma_{\eps}}{t_{\eps}} \theta &  \text{if} \ x = \pm \frac{r^{2v}}{2}
\end{cases} 
\end{equation}
Thanks to \eqref{eqsteklovontheta}, 
$\overline{\theta_v} = r^{-2v} \int_{-\frac{r^{2v}}{2}}^{\frac{r^{2v}}{2}} \theta_v(x,v)dx$ satisfies the equation
\begin{equation}\label{eqbarthetav}
- \overline{\theta_v}' = (\ln r)^2 \left( \left( \theta\left(\frac{r^{2v}}{2},v\right) + \theta\left(-\frac{r^{2v}}{2},v\right)\right)  \left( \frac{\sigma_{\eps}}{t_{\eps}}\sqrt{1+\eps^2 r^{2v}} -\frac{1}{2} \right) +\frac{3}{4} \bar \theta \right) \hskip.1cm.
\end{equation}
That is, if we denote $\mu(v) = \left( 2\bar{\theta}(v) - \theta\left(\frac{r^{2v}}{2},v\right) - \theta\left(-\frac{r^{2v}}{2},v\right) \right) $, we have the equation
\begin{equation}\label{eqonbarthetav2}
- \overline{\theta_v}' = (\ln r)^2 \left( \left( 2\frac{\sigma_{\eps}}{t_{\eps}}- \frac{1}{4} \right)\bar{\theta} + 2\frac{\sigma_{\eps}}{t_{\eps}} \left(\sqrt{1+\eps^2 r^{2v}} - 1\right)\bar{\theta}  + \left(\frac{1}{2} - \frac{\sigma_{\eps}}{t_{\eps}}\sqrt{1+\eps^2 r^{2v}}\right)\mu \right) \hskip.1cm.
\end{equation}
We also have that
\begin{equation}\label{eqonbartheta}
\bar\theta' = \left(\ln \frac{1}{r}\right)\mu + \overline{\theta_v}.
\end{equation}
Moreover, recall the compatibility conditions \eqref{eqcompatibilityp0} and \eqref{eqcompatibilityp1} for $v=0$ and $v=1$
given by
\begin{equation*} 
\frac{1}{\eps^2}\int_{-\frac{\eps^2}{2}}^{\frac{\eps^2}{2}} \left(u_{\eps}\circ \varphi_0^{-1}\right)(x,0)dx = \overline{\phi}(1) = \frac{\sqrt{t_{\eps}}\overline{\theta}(0)}{\sqrt{\eps}\sqrt{\ln\frac{1}{r}}} \hskip.1cm,
\end{equation*}
\begin{equation*} 
\frac{1}{r^2\eps^2}\int_{-\frac{r^2\eps^2}{2}}^{\frac{r^2\eps^2}{2}} \left( u_{\eps}\circ \varphi_1^{-1}\right)(x,0)dx  = \overline{\phi}(r) = \frac{\sqrt{t_{\eps}}r^{-\frac{1}{2}} \overline{\theta}(1)}{\sqrt{\eps}\sqrt{\ln\frac{1}{r}}} \hskip.1cm.
\end{equation*}

Using these equations on $\bar{\theta}$ and $\overline{\theta_v}$ and that thanks to \eqref{eqestthetalinex} $\left\| \mu \right\|_{L^2(0,1)}$ is controlled by $\left\|\theta_x \right\|_{L^2(\widetilde{\Omega})}$, which has to be very small as $\eps\to 0$, 
we can perform an asymptotic analysis of the eigenvalue $\sigma_{\eps}^1$, and of the functions $\bar{\theta}$ and $\overline{\theta_v}$ in $W^{1,2}(0,1)$ associated to the corresponding eigenfunction.

\begin{claim} \label{claimassymptotic} 
We have the following asymptotic expansion of $\sigma_{\eps}^1$
\begin{equation}\label{eqassympexpsigma}
\frac{\sigma_{\eps}^1}{t_{\eps}} 
= 
\frac{1}{8} + \frac{\pi^2}{2\left(\ln r\right)^2} - \frac{c_1}{c_0} \frac{\pi}{\left(\ln r\right)^2} 
+ 
O\left( \frac{1}{(\ln r)^2}  \left(\frac{c_1}{c_0}\right)^3+\left\|\frac{\theta_x}{c_0} \right\|_{L^2(\widetilde{\Omega})} 
+\eps^2 \right)
\end{equation}
as $\eps\to 0$, the $W^{1,2}(0,1)$ asymptotic expansion of $\bar{\theta}$
\begin{equation}\label{eqassympexpbartheta}
\frac{\bar{\theta}}{c_0} =  f + \frac{c_1}{c_0} f_1 + \left(\frac{c_1}{c_0}\right)^2 f_2 + 
O\left(  \left(\frac{c_1}{c_0}\right)^3 + (\ln r)^2  \left\|\frac{\theta_x}{c_0} \right\|_{L^2(\widetilde{\Omega})}  \right)
\end{equation}
and the $W^{1,2}(0,1)$ asymptotic expansion of $\overline{\theta_v}$
\begin{equation}\label{eqassympexpbarthetav}
\frac{\overline{\theta_v}}{c_0} =  f' + \frac{c_1}{c_0} \left(f_1\right)' + \left(\frac{c_1}{c_0}\right)^2 \left(f_2 \right)' 
O\left(  \left(\frac{c_1}{c_0}\right)^3 + (\ln r)^2  \left\|\frac{\theta_x}{c_0} \right\|_{L^2(\widetilde{\Omega})} \right)
\end{equation}
where $c_0 = \sqrt{M}$, $c_1 = \bar{\theta}(0)$, and for $v\in [0,1]$, we define $f(v) = \sin \pi v$ and
\begin{equation} \label{eqdeff1} 
f_1(v) = (1-v)\cos\pi v - \frac{1}{2\pi}\sin\pi v 
\end{equation}
\begin{equation} \label{eqdeff2} 
f_2(v) = \left( -\frac{v^2}{2}+v - \left(\frac{1}{2}+\frac{3}{8\pi^2}\right) \right) \sin\pi v \hskip.1cm. 
\end{equation}

\end{claim}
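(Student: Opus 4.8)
The plan is to perform a Lyapunov--Schmidt reduction for the coupled system \eqref{eqonbartheta}--\eqref{eqonbarthetav2} satisfied by $\bar\theta$ and $\overline{\theta_v}$, around the resonant Dirichlet operator $-\partial_v^2-\pi^2$ on $(0,1)$, whose kernel is spanned by $f=\sin\pi v$. Set $s=\sigma_\eps^1/t_\eps$, $\nu=(\ln r)^2(2s-\tfrac14)$, $\beta=c_1/c_0=\bar\theta(0)/c_0$, $g=\bar\theta/c_0$ and $h=\overline{\theta_v}/c_0$; by \eqref{eqcompatibilityp0}--\eqref{eqcompatibilityp1} one has $g(0)=\beta$ and $g(1)=\bar\theta(1)/c_0$, the latter being super--exponentially small in $\eps$. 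The first task is to collect the a priori bounds. Combining \eqref{ineqsigmaeps}--\eqref{ineqsigmaeps2}, the energy identities \eqref{eqenergyeigenfunc3} and \eqref{eqenergyeigenfunc5}, the gap inequality \eqref{eqgapsigmastarsigmaeps2}, \cref{claimpointwise} --- in particular the refined estimate \eqref{eqestneibp0precise}, which, once the weak $W^{1,2}(\Sigma)$ limit of $u_\eps$ is identified with a genuine (hence bounded) $\sigma_\star$--eigenfunction, gives $|c_1|=O(\sqrt\eps\,\sqrt{\ln\tfrac1r}\,)$ --- the estimate \eqref{eqestthetalinex}, and the normalisation $M\ge\eps^{1-\tau}$ with $M$ bounded away from $1$ from \cref{claimchoiceofM}, a bootstrap (using also $t_\eps\to 8\sigma_\star$, $\sigma_\eps\to\sigma_\star$, so that the leading terms cancel in $\delta_\eps$) yields: $s=\tfrac18+O((\ln r)^{-2})$, so $0\le\nu\le\pi^2+o(1)$ by \eqref{ineqsigmaeps}; $\|\theta_x/c_0\|_{L^2(\widetilde\Omega)}=O(\eps^{1+\alpha})$, whence $(\ln r)^2\|\theta_x/c_0\|_{L^2}=O(\eps^{1-\alpha})\to 0$; $\|\mu/c_0\|_{L^2(0,1)}\le 2\|\theta_x/c_0\|_{L^2(\widetilde\Omega)}$ and $\|h\|_{W^{1,2}(0,1)}=O(1)$; $\beta\to0$; and, comparing $M$ with $2\int_0^1\bar\theta^2\,dv$ through \eqref{eqthetasquare}, $\|g\|_{L^2(0,1)}^2=\tfrac12+O(\eps^{1+\alpha})$. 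Securing these polynomial--in--$\eps$ bounds, so that the logarithmic weights $(\ln r)^2=\eps^{-2\alpha}$ are beaten, is the one place in this step where real care is needed, but all the ingredients are already available from the previous sections.

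With these in hand, testing \eqref{eqonbartheta}--\eqref{eqonbarthetav2} against $\psi\in W^{1,2}_0(0,1)$ and integrating by parts to remove $\overline{\theta_v}'$ produces the weak identity
\[
\int_0^1 g'\psi' = \nu\int_0^1 g\psi + \langle E_\eps,\psi\rangle ,
\]
where $E_\eps$ collects $(\ln\tfrac1r)\int(\mu/c_0)\psi'$, the $O(\eps^2(\ln r)^2)$ contribution of $\sqrt{1+\eps^2 r^{2v}}-1$, and $(\ln r)^2\int(\tfrac12-s\sqrt{1+\eps^2 r^{2v}})(\mu/c_0)\psi$, so that $\|E_\eps\|_{H^{-1}}=O((\ln r)^2\|\theta_x/c_0\|_{L^2}+\eps^2)=O(\eps^{1-\alpha})$. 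Since $g$ is bounded in $W^{1,2}(0,1)$, a subsequence converges weakly in $W^{1,2}$ and uniformly to $g_\infty$ with $g_\infty(0)=g_\infty(1)=0$, $\|g_\infty\|_{L^2}^2=\tfrac12$ and $-g_\infty''=\nu_\infty g_\infty$ on $W^{1,2}_0(0,1)$, $\nu_\infty=\lim\nu\in(0,\pi^2]$; hence $\nu_\infty=\pi^2$ and $g_\infty=\pm f$, the sign being fixed by \eqref{ineqsigmaeps}, and a standard test--function argument upgrades this to $g\to f$ in $W^{1,2}(0,1)$ and $\nu=\pi^2+o(1)$.

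Now comes the reduction. Write $g=f+\rho$ with $\rho\to0$ in $W^{1,2}$ and $\nu=\pi^2+\kappa$, $\kappa\to0$; then
\[
-\rho''-\pi^2\rho = \kappa f+\kappa\rho+E_\eps \quad\text{in } W^{-1,2}(0,1),\qquad \rho(0)=\beta,\ \rho(1)=\bar\theta(1)/c_0 .
\]
The operator $-\partial_v^2-\pi^2$ on $W^{1,2}_0(0,1)$ is Fredholm with kernel $\mathbb{R}f$ and spectral gap $3\pi^2$ on $\{f\}^\perp$. Projecting onto $f$ (equivalently, testing with $\psi=f$, which brings in the boundary term $-\pi(\beta+\bar\theta(1)/c_0)$) gives the bifurcation relation $\kappa\,\langle g,f\rangle=-\pi\beta+O(\bar\theta(1)/c_0+\|E_\eps\|_{H^{-1}})$; since $\langle g,f\rangle=\tfrac12+\langle\rho,f\rangle$ and the normalisation $\|g\|_{L^2}^2=\tfrac12+O(\eps^{1+\alpha})$ forces $\langle\rho,f\rangle=-\tfrac12\|\rho\|^2+O(\eps^{1+\alpha})$, one obtains $\kappa=-2\pi\beta+O(\beta\|\rho\|^2+\eps^{1-\alpha})$. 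Subtracting the explicit solution $\beta f_1$ of \eqref{eqdeff1} (which satisfies $-f_1''-\pi^2f_1=-2\pi f$, $f_1(0)=1$, $f_1(1)=0$) and inverting on $\{f\}^\perp$, while controlling the $f$--component of the remainder by the normalisation, yields $\rho=\beta f_1+O(\|\rho\|^2+|\kappa|\,\|\rho\|+\eps^{1+\alpha})$ in $W^{1,2}$, which self--improves to $\|\rho\|_{W^{1,2}}=O(|\beta|+\eps^{1+\alpha})$ and hence $\kappa=-2\pi\beta+O(\beta^3+\eps^{1-\alpha})$. Repeating the step once more, with $\rho=\beta f_1+\rho^{(2)}$ and $-f_2''-\pi^2f_2=-2\pi f_1$ --- where the resonant contribution vanishes precisely because $\langle f_1,f\rangle=0$, and the free additive constant in \eqref{eqdeff2} is pinned down by $\langle f_2,f\rangle=-\tfrac12\|f_1\|^2$, which is exactly what makes the $O(\beta^2)$ term in $\|g\|_{L^2}^2=\tfrac12+O(\eps^{1+\alpha})$ cancel --- gives $\rho^{(2)}=\beta^2f_2+O(\beta^3+(\ln r)^2\|\theta_x/c_0\|)$ in $W^{1,2}$. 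This is \eqref{eqassympexpbartheta} with $\beta=c_1/c_0$; \eqref{eqassympexpbarthetav} then follows from $h=g'-(\ln\tfrac1r)(\mu/c_0)$, the last term being $O(\eps)$, together with \eqref{eqbarthetav} for the derivative, and \eqref{eqassympexpsigma} from $s=\tfrac18+\tfrac{\pi^2+\kappa}{2(\ln r)^2}$ upon dividing the $\kappa$--remainder by $(\ln r)^2$.

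The main obstacle is exactly this resonance: to leading order $-\partial_v^2-\nu$ is singular along $f=\sin\pi v$, so its inverse degenerates as $\eps\to0$ and the $f$--component of $\rho$ cannot be recovered from any elliptic estimate --- it must be supplied by the a priori normalisation $\|g\|_{L^2}^2\to\tfrac12$, and the vanishing of the resonant terms at each order is what forces the specific shapes of $f_1$ and $f_2$ and the value of the constant in \eqref{eqdeff2}. The other delicate point is securing the polynomial a priori bounds of the first step, in particular the sharp size $|c_1|=O(\sqrt\eps\,\sqrt{\ln\tfrac1r}\,)$ via \eqref{eqestneibp0precise}, which is what keeps $\|\overline{\theta_v}/c_0\|_{L^2}$ bounded.
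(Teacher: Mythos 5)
Your Lyapunov--Schmidt reduction is correct and arrives at the same expansion, but it packages the analysis differently from the paper. The paper performs an explicit multi--scale blow-up: at each order $i$ it introduces a composite scale
\[
\delta_i = |\nu_i| + (\ln r)^2\bigl\|\tfrac{\mu}{c_0}\bigr\|_{L^2} + \bigl\|\tfrac{R_i}{c_0}\bigr\|_{W^{1,2}} + r^{1/4} + \cdots ,
\]
divides by $\delta_i c_0$, extracts a strong $W^{1,2}$ limit, and then runs a dichotomy ($\alpha_i\neq 0$, in which case $c_1/c_0$ is already swallowed by the $(\ln r)^2\|\theta_x/c_0\|$ error and the claim is immediate, versus $\alpha_i = 0$, in which case $\beta_i\neq 0$ and a compactness-contradiction argument pins the limiting ODE to give $\nu_1\sim -2\pi c_1/c_0$). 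Your approach makes the Fredholm structure explicit: you observe that $-\partial_v^2-\pi^2$ on $W^{1,2}_0(0,1)$ has kernel $\IR f$ and spectral gap $3\pi^2$ on $\{f\}^\perp$, project the equation for $\rho=g-f$ onto $f$ to get the solvability (bifurcation) relation $\kappa/2+\kappa\langle\rho,f\rangle=-\pi(\rho(0)+\rho(1))+O(\|E_\eps\|)$, invert on $\{f\}^\perp$ to isolate $\beta f_1$, and recover the $f$-component of $\rho$ from the normalization $\|g\|_{L^2}^2=\tfrac12+O(\|\theta_x/c_0\|)$. This is the same solvability mechanism as the paper's dichotomy, but it avoids introducing the auxiliary quantities $\alpha_i,\beta_i,\nu_i^0,\mu_i^0$ and the associated contradiction arguments, and it handles both dominance regimes simultaneously. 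What your approach buys is transparency and a shorter write-up; what the paper's approach buys is that the scales $\delta_i$ track exactly which term is dominant without committing a priori to an estimate of $\|\rho\|$, which is convenient because the relevant competitors ($\beta$, $(\ln r)^2\|\theta_x/c_0\|$, $r^{1/4}$) are not ordered a priori. Two small glitches in your write-up, neither fatal: the remainder you quote for $\rho-\beta f_1$ should carry $\eps^{1-\alpha}$ (i.e.\ $(\ln r)^2\|\theta_x/c_0\|$, the size of $\|E_\eps\|_{H^{-1}}$) rather than $\eps^{1+\alpha}$; and the $|c_1|=O(\sqrt\eps\sqrt{\ln\frac1r})$ bound requires knowing that the weak limit of $u_\eps$ is a bona fide eigenfunction, which one doesn't yet know at this stage -- the weaker bound $|c_1|=O(\sqrt\eps\sqrt{\ln\tfrac1r}\,\ln\tfrac1\eps)$ from \eqref{eqestneibpi} and \eqref{eqcompatibilityp0} is what is actually available and it suffices to get $\beta\to 0$.
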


\begin{rem} \label{rem_error_scales}
We show below (see \eqref{eqestonnormmu}) that $\left\| \frac{\theta_x}{c_0} \right\|_{L^2} =O(\eps \ln(r)^{-1}) = O(\eps^{1+\alpha})$.
Therefore, in terms of $\alpha$ we can rewrite the error terms as follows.
In \eqref{eqassympexpsigma} the error term is controlled by
$$
\frac{\eps^{\frac{1}{2}(3+\alpha)} \log(1/\eps)^3}{c_0^3} + \eps^{1+\alpha} 
$$
and the error terms in \eqref{eqassympexpbartheta} and \eqref{eqassympexpbarthetav} are controlled by
$$
\frac{\eps^{\frac{3}{2}(1-\alpha)}\log(1/\eps)^3}{c_0^3}+\eps^{1-\alpha} \hskip.1cm.
$$
\end{rem}

\bigskip

\begin{proof}
Using that $ M \geq \eps^{1-\tau}$, $2\alpha < \tau < 1 $, and the pointwise estimates \eqref{eqestneibpi} in combination with the compatibility condition\eqref{eqcompatibilityp0}, 
we get that 
$$
\frac{\left(\ln r\right)^2 \eps \overline{\phi}(1)^2}{2M} \leq C \eps^{\tau - 2\alpha}  \left(\ln\eps\right)^2 \to 0 \text{ as } \eps\to 0 \hskip.1cm.
$$ 
From this combined with \eqref{ineqsigmaeps}, \eqref{eqenergyeigenfunc5} and \eqref{eqgapsigmastarsigmaeps2} we then deduce that $(\ln r)^2 \left(  2\frac{\sigma_{\eps}}{t_{\eps}}- \frac{1}{4} \right)$ is bounded from below. 
By \eqref{ineqsigmaeps}, it is also bounded from above by $\pi^2$. 
Therefore, up to taking a non-relabeled subsequence\footnote{We ignore the issue of taking subsequence from here on, since we only work with precompact sequences with limit independent of the subsequence.}, we may assume that 
$$
(\ln r)^2 \left(  2\frac{\sigma_{\eps}}{t_{\eps}}- \frac{1}{4} \right) \to \lambda \in [0,\pi^2].
$$ 
We then deduce from \eqref{eqestthetalinex} and \eqref{eqenergyeigenfunc5} that 
\begin{equation} \label{eqestonnormmu} 
\left\| \frac{\mu}{\sqrt{ M}} \right\|_{L^2(0,1)} 
\leq
\left\| \frac{\theta_x}{c_0} \right\|_{L^2(0,1)} 
=
O\left(\eps \left(\ln \frac{1}{r}\right)^{-1} \right) \text{ as } \eps \to 0 \hskip.1cm.
\end{equation}
Moreover, by \eqref{eq_bar_theta_l2} we also have that $\frac{\bar{\theta}}{\sqrt{ M}}$ is bounded in $L^2(0,1)$. 
By \eqref{eqenergyeigenfunc5} and Jensen's inequality we find that $\frac{\overline{\theta_v}}{\sqrt{ M}}$ is bounded in $L^2(0,1)$.
Thanks to \eqref{eqonbarthetav2}, we can then deduce that $\frac{\overline{\theta_v}}{\sqrt{ M}}$ is bounded in $W^{1,2}(0,1)$.
We may therefore take a subsequence such that
$$
\frac{\overline{\theta_v}}{\sqrt{ M}} \to g  
$$ 
weakly in $W^{1,2}(0,1)$ and strongly in $\mathcal{C}^{0,\frac{1}{2}}$.
 By \eqref{eqonbartheta} and \eqref{eqestonnormmu}, $\frac{\bar{\theta}}{\sqrt{M}}$ is bounded in $W^{1,2}(0,1)$ and we may thus assume that
 $$
 \frac{\bar{\theta}}{\sqrt{M}} \to f
 $$
 weakly in $W^{1,2}(0,1)$.
 Again, by \eqref{eqonbartheta} combined with \eqref{eqestonnormmu}, $\frac{\bar{\theta}}{\sqrt{M}}$ converges strongly in $W^{1,2}(0,1)$ to $f$ and $f' = g$. 
 Thanks to the pointwise bounds \eqref{eqestneibpi2} and \eqref{eqestneibpi} combined with the compatibility conditions \eqref{eqcompatibilityp0} and \eqref{eqcompatibilityp1}, and also using that that $M \geq \eps^{1-\tau}$, we have that $f(0) = 0$ and $f(1)= 0$. 
Therefore, passing to the limit in \eqref{eqonbarthetav2} we find that
\begin{equation} \label{eqsteklovfk} \begin{cases}
f'' + \lambda f =0 &  \text{in} \ (0,1) \\
f = 0 &  \text{if} \ v = 0 \ \text{or} \ v=1 \hskip.1cm.
 \end{cases} \end{equation}
Since $\int_0^1 f^2(v)dv = \frac{1}{2}$, we have that $f\neq 0$ and since $\lambda\leq \pi^2$, we must  have
$$
 \lambda = \pi^2 \hspace{0.2cm} \text{and} \hspace{0.2cm}
f(v) =\pm \sin(\pi v),
$$
i.e.\
\begin{align} \label{eqlimiteigenvalue_0}
\lim_{\eps \to 0} \left( (\ln r)^2 \left(  2\frac{\sigma_{\eps}}{t_{\eps}}- \frac{1}{4} \right) \right)= \pi^2
\end{align}
By changing $u_{\eps}$ to $-u_{\eps}$ if necessary we assume from here on that $f(v) = \sin(\pi v)$.

\bigskip 

Let us continue the asymptotic expansion of $\sigma_{\eps}$ and $\bar{\theta}$. We set 
$$ R_1 = \bar{\theta} -  c_0 f \hskip.1cm,$$ 
where $f(v) = \sin(\pi v)$ and $c_0 = \sqrt{M}$ and we set
$$
\nu_1 = \left(\ln r\right)^2\left( 2\frac{\sigma_{\eps}}{t_{\eps}} -\frac{1}{4}\right)  - \pi^2 + a \hskip.1cm,\
$$ 
where 
$$
 \left(\ln r\right)^2\left( 2\frac{\sigma_{\eps}}{t_{\eps}} -\frac{1}{4}\right)  - \pi^2  \to 0 \hskip.1cm,\
$$
as $\eps\to 0$,
and
$$
a(v) = \left( \ln r \right)^2 2\frac{\sigma_{\eps}}{t_{\eps}} \left(\sqrt{1+\eps^2 r^{2v}}-1\right) = O\left( \left(\ln r\right)^2 \eps^2 \right)  \hskip.cm.
$$ 
Then, we write \eqref{eqonbarthetav2} as
\begin{equation} \label{eqonRv}
- \left( \overline{\theta_v} - c_0 f'   \right)' = \nu_1 \bar{\theta} + \pi^2 R_1 + b \mu
\end{equation}
and \eqref{eqonbartheta} as
\begin{equation} \label{eqonRRv}
\overline{\theta_v} - c_0 f' = R_1' - \left(\ln\frac{1}{r}\right) \mu
\end{equation}
where
$$
b(v) = \left(\ln r\right)^2 \left(\frac{1}{2}-\frac{\sigma_{\eps}}{t_{\eps}} \sqrt{1+\eps^2 r^{2v}}\right) \sim \frac{3}{8} \left(\ln r\right)^2 $$
as $\eps\to 0$
since $\frac{\sigma_\eps}{t_\eps} \to \frac{1}{8}$ by \eqref{eqlimiteigenvalue_0}.
 We recall that  $\left\| \frac{\mu}{c_0} \right\|_{L^2(0,1)} = O\left(\eps \left(\ln\frac{1}{r}\right)^{-1} \right)$ (see \eqref{eqestonnormmu}). 
 We set 
\begin{equation} \label{eqdefscaledelta}
\delta_1 = \left\vert \nu_1 \right\vert + \left(\ln r\right)^2 \left\| \frac{\mu}{c_0} \right\|_{L^2(0,1)} + \left\| \frac{R_1}{c_0} \right\|_{W^{1,2}(0,1)} + r^{\frac{1}{4}} \hskip.1cm. 
\end{equation}
We divide the equations \eqref{eqonRv} and \eqref{eqonRRv} by $\delta_1 c_0$ so that the right-hand-terms in \eqref{eqonRv} and \eqref{eqonRRv} stay bounded in $L^2(0,1)$ (by the definition of $\delta_1$).
This means that 
$\frac{1}{\delta_1 c_0}\left( \overline{\theta_v} - c_0f' \right)$
is bounded in $W^{1,2}(0,1)$ and we may assume
 $$
 \frac{1}{\delta_1 c_0}\left( \overline{\theta_v} - c_0f' \right) \to \rho^0
 $$
 weakly in $W^{1,2}$ and strongly in $\mathcal{C}^{0,\frac{1}{2}}$.
Then, by the definition of $\delta_1$ \eqref{eqdefscaledelta}, $\frac{R_1}{\delta_1 c_0}$ is bounded in $W^{1,2}(0,1)$ and we may assume
$$
\frac{R_1}{\delta_1 c_0} \to R_1^0
$$
weakly in $W^{1,2}(0,1)$.
Then, passing to the limit in \eqref{eqonRRv}, $\frac{R_1}{\delta_1 c_0}$ converges strongly in $W^{1,2}(0,1)$ and $\left(R_1^0\right) ' = \rho_0$. 
And we get, passing to the limit in \eqref{eqonRv} and \eqref{eqdefscaledelta},
\begin{equation} \label{eqlimR0}
- \left(R_1^0\right)'' = \nu_1^0 f + \pi^2 R_1^0 + \mu_1^0  \hskip.1cm,
\end{equation}
\begin{equation} \label{eqlimdelta0}
1 = \left\vert\nu_1^0\right\vert + \alpha_1 + \left\| R^0_1 \right\|_{W^{1,2}(0,1)} 
\end{equation}
where 
\begin{equation} \label{eqdefalllimits}
\nu_1^0 = \lim_{\eps\to 0} \frac{\nu_1 + r^{\frac{1}{4}}}{\delta_1} \hskip.1cm,\hskip.1cm 
\alpha_1 =\lim_{\eps\to 0}\frac{\left(\ln r\right)^2}{\delta_1} \left\| \frac{\mu}{c_0} \right\|_{L^2(0,1)}
\text{ and } 
\frac{b\mu}{c_0 \delta_1} \to \mu_1^0 \text{ in } L^2(0,1) \text{ as } \eps \to 0 \hskip.1cm.
\end{equation}
We recall here that $f(v) = \sin(\pi v)$. 
We also set 
$$
\beta_1 =\lim_{\eps\to 0} \frac{\bar{\theta}(0)}{\delta_1 c_0}.
$$ 
Notice that $\left\| \mu_1^0 \right\|_{L^2(0,1)} \leq \alpha_1$ and that $\beta_1 = R_1^0(0) \leq \left\| R_1^0 \right\|_{W^{1,2}(0,1)} $. 
Note that $R_1^0(1)=0$ since $R_1(1) = O(\eps^{\frac{1-3\alpha}{2}}r^{\frac{1}{2}} )$ as $\eps\to 0$ thanks to the pointwise bound \eqref{eqestneibpi2}
and $\delta_1 \geq r^{\frac{1}{4}}$
Therefore, testing \eqref{eqlimR0} against $f$ we get 
\begin{equation} \label{eqnu0beta0mu0} 
-\pi\beta_1 = \frac{\nu_1^0}{2}  + \int_{0}^1 \mu_1 f dv = 0 \hskip.1cm. 
\end{equation}
Let us also prove that
\begin{equation} \label{eqorthogonalityR1f} 
\int_0^1 \frac{R_1}{\delta_1c_0}(v)f(v)dv \to 0 \text{ as } \eps \to 0 \hskip.1cm. 
\end{equation} 
We have
$$ 
\left\vert \int_0^1 \frac{R_1}{ c_0}(v)f(v)dv \right\vert  = \left\vert \int_0^1 \left(\frac{P(\bar{\theta})}{c_0} -  f \right)(v)f(v)dv  \right\vert 
\leq 
\left\| f \right\|_{L^{2}(0,1)}  \left\| \frac{P\left(\overline{\theta}\right)}{c_0} - f \right\|_{L^{2}(0,1)} \hskip.1cm,
$$
where we denote by $P(\bar{\theta})$ the orthogonal projection on the space generated by $f$ in $L^2(0,1)$.
A simple application of Pythagoras' theorem gives that
\begin{equation} \label{eqdiffpthetabarf}
\begin{split}\left\| \frac{P(\overline{\theta})}{c_0} - f \right\|_{L^2(0,1)}
\ = 
\frac{1}{2} \left\| \frac{\overline{\theta}}{c_0}-f \right\|_{L^2(0,1)}^2 + O\left( \left\| \frac{\theta_x}{c_0} \right\|_{L^2(0,1)} \right)
\\  = 
\frac{1}{2}\left\| \frac{R_1}{c_0} \right\|_{L^2(0,1)}^2 + O\left(\left\| \frac{\theta_x}{c_0} \right\|_{L^2(0,1)} \right) \hskip.1cm.
\end{split}
\end{equation}
In particular, by the definition of $\delta_1$, see \eqref{eqdefscaledelta}, and using that $\left\|\frac{R_1}{c_0}\right\|_{L^2(0,1)} \to 0$ we obtain \eqref{eqorthogonalityR1f}. 

If $\alpha_1 = \beta_1 =0$, then $\mu_1^0=0$ (recall that $\|\mu_1^0\|_{L^2(0,1)} \leq \alpha_1$) and then, by \eqref{eqnu0beta0mu0}, also $\nu_1^0=0$. 
Then, by  \eqref{eqlimR0}, $\left(R_1^0\right)''+ \pi^2 R_1^0 = 0$.
Since also $R_1^0(1)=0$ this implies that $R_1^0$ is parallel to $f$.
But by \eqref{eqorthogonalityR1f} $R_1^0$ is also orthogonal to $f$, which implies that we have $R_1^0=0$,
so that all terms on the right hand side of \eqref{eqlimdelta0} vanish, which is impossible. 

Therefore, we must have $\alpha_1 + \left\vert \beta_1 \right\vert \neq 0$. 
Notice that if $\alpha_1 \neq 0$, then, we have that
$$ 
\frac{\bar{\theta}(0)}{c_0} \sim \frac{\beta_1}{\alpha_1} \left(\ln r\right)^2 \left\| \frac{\mu}{c_0}\right\|_{L^2(0,1)} 
$$
so that the asymptotic expansion is proved. 

We now assume that $\alpha_1 = 0$. 
Therefore, by \eqref{eqdefscaledelta} and the definition of $\nu_1^0$ and $\beta_1 \neq 0$ in \eqref{eqdefalllimits},
\begin{equation} \label{eqestonnu} 
\nu_1 
\sim 
\frac{\nu_1^0}{\left\vert \beta_1 \right\vert} \frac{\left\vert \bar{\theta}(0)\right\vert}{\sqrt{M}} + r^{\frac{1}{4}}
\sim
- 2\pi \frac{\bar{\theta}(0)}{c_0}  +r^{\frac{1}{4}}
\text{ as } \eps\to 0 \hskip.1cm. \end{equation}
Again, by \eqref{eqdefscaledelta} and \eqref{eqdefalllimits}, and because $R_0$ is the strong limit of $ \frac{R}{\delta_1 c_0}$ in $W^{1,2}(0,1)$,  
\begin{equation} \label{eqnormonrest}  \left\| \frac{R_1}{c_0} \right\|_{W^{1,2}(0,1)} \sim \frac{\left\| R_1^0 \right\|_{W^{1,2}(0,1)}}{ \left\vert \beta_1 \right\vert }\frac{\left\vert \bar{\theta}(0)\right\vert}{c_0}  \text{ as } \eps\to 0 \hskip.1cm. \end{equation}
Notice also that $ \frac{R_1 - \bar{\theta}(0)f_1}{\delta_1 c_0}$ converges to $0$ in $W^{1,2}(0,1)$ where $f_1$ defined by \eqref{eqdeff1} is the unique solution of the equation $ -f_1'' - \pi^2 f_1 = -2\pi f$ which satisfies $f_1(0) = 1$, $f_1(1) = 0$ and $\int_{0}^1 f_1(v)f(v)dv = 0$.

\bigskip

Therefore, we can continue the asymptotic expansion, we set 
$$ 
R_2 =  \bar{\theta} - c_0 f - c_1 f_1 \hskip.1cm,
$$
where we recall that $c_0 = \sqrt{M}$ and we set $c_1 = \bar{\theta}(0)$. 
Knowing \eqref{eqestonnu}, we set
$$
\nu_2 = \nu_1 + 2\pi \frac{c_1}{c_0} \hskip.1cm.
$$
Then, we write \eqref{eqonRv} as
\begin{equation} \label{eqonR1v}
- \left(\overline{\theta_v} - c_0 f' - c_1 f_1' \right)' = \nu_2 \bar{\theta} + \pi^2 R_2 - 2\pi \frac{c_1}{c_0} R_1 + b \mu 
\end{equation}
and \eqref{eqonRRv} as
\begin{equation} \label{eqonR1R1v}
\overline{\theta_v} - c_0 f' - c_1 f_1' = R_2' + \left(\ln\frac{1}{r}\right) \mu \hskip.1cm.
\end{equation}
We set 
\begin{equation} \label{eqdefscaledelta1}
\delta_2 = \left\vert \nu_2 \right\vert + \left(\ln r\right)^2 \left\| \frac{\mu}{c_0} \right\|_{L^2(0,1)} + \left\| \frac{R_2}{c_0} \right\|_{W^{1,2}(0,1)} + \frac{\left(c_1\right)^2}{\left( c_0\right)^2} +r^{\frac{1}{4}}  \hskip.1cm. 
\end{equation}
Then, by the the same argument as above, we can pass to the limit in \eqref{eqonR1v} and \eqref{eqonR1R1v} and find that
\begin{equation} \label{eqlimR20}
- \left(R_2^0\right)'' = \nu_2^0 f + \pi^2 R_2^0 - 2\pi \beta_2 f_1 + \mu_2  \hskip.1cm,
\end{equation}
\begin{equation} \label{eqlimdelta20}
1 = \left\vert\nu_2^0 \right\vert + \alpha_2 + \left\| R_2^0 \right\|_{W^{1,2}(0,1)} + \beta_2 \hskip.1cm,
\end{equation}
where $R_2^0$ is the strong limit of $\frac{R_2}{\delta_2 c_0}$ in $W^{1,2}$,
\begin{equation} \label{eqdefalllimits2}
\nu_2^0 = \lim_{\eps\to 0} \frac{\nu_2 + r^{1/4} }{\delta_2} \hskip.1cm,\hskip.1cm \alpha_2 =\lim_{\eps\to 0}\frac{\left(\ln r\right)^2}{\delta_2} \left\| \frac{\mu}{c_0} \right\|_{L^2(0,1)} \hskip.1cm,\hskip.1cm \beta_2 = \lim_{\eps\to 0} \frac{\left(c_1\right)^2}{\delta_2 \left(c_0\right)^2} 
\end{equation}
and 
$$ 
\frac{b\mu}{\delta_2 c_0} \to \mu_2 \text{ in } L^2(0,1) \text{ as } \eps \to 0 \hskip.1cm.
$$
Now, if $\alpha_2 \neq 0$, we get that
$$ 
\frac{\left(c_1\right)^2}{\left(c_0\right)^2} \sim \frac{\beta_2}{\alpha_2} \left(\ln r\right)^2 \left\| \frac{\mu}{c_0}\right\|_{L^2(0,1)} 
$$
and the asymptotic expansion is proved.
 
We now assume that $\alpha_2 = 0$. 
We integrate \eqref{eqlimR20} against to $f$ and because $R_2^0(0)= R_2^0(1)=0$, and $\mu_2 =0$, we get
\begin{equation} \label{eqnu2beta2mu2}
0 = \frac{\nu_2^0}{2}     \hskip.1cm. 
\end{equation}
Therefore, \eqref{eqlimR20} becomes $ - \left(R_2^0\right)'' - \pi^2 R_2^0 = -2\pi \beta_2 f_1 $ and we have more precisely that $\frac{R_2 - c_2 f_2}{\delta_2 c_0}$ converges to $0$ in $W^{1,2}$, 
where $c_2 = \frac{c_1^2}{c_0}$, and $f_2$ is defined by \eqref{eqdeff2} and is the solution of 
$$
 - f_2'' - \pi^2 f_2 = - 2\pi f_1 
 $$
such that $f_2(0) = 0$, $f_2(1) = 0$ and $\int_{0}^1 f_2 f + \frac{1}{2} \int_0^1 \left(f_1\right)^2= 0 $. 
Indeed, because $\int_0^1 f_1 f = 0$, we have that 
\begin{equation} \label{eqintR2R1} 
\int_0^1 R_2 f  = \int_0^1 R_1f = c_0 \int_0^1 \left(\frac{P(\bar{\theta})}{c_0}-f\right) f = -\frac{c_0}{\sqrt{2}} \left\| \frac{P\left(\overline{\theta}\right)}{c_0} - f \right\|_{L^{2}(0,1)}
 \end{equation}
and thanks to \eqref{eqdiffpthetabarf} and \eqref{eqnormonrest}, we get
$$ \int_0^1 R_2 f = -\frac{c_0}{2} \left\| \frac{R_1}{c_0} \right\|_{L^{2}(0,1)}^2
$$ 
so that we have the integral formula on $f_2$ when we pass to the limit. 

We can continue the asymptotic expansion again. 
We set 
$$
R_3 =  \bar{\theta} - c_0 f - c_1 f_1 - c_2 f_2 \hskip.1cm,
$$
and we set 
$$
\nu_3 = \nu_2 = \nu_1 + 2\pi \frac{c_1}{c_0} \hskip.1cm.
$$
Then, we write \eqref{eqonR1v} as
\begin{equation} \label{eqonR3v}
- \left(\overline{\theta_v} - c_0 f' - c_1f_1' - c_2 f_2' \right)' = \nu_3 \bar{\theta} + \pi^2 R_3 - 2\pi \frac{c_1}{c_0}R_2   + b \mu 
\end{equation}
and \eqref{eqonR1R1v} as
\begin{equation} \label{eqonR3R3v}
\overline{\theta_v} - c_0 f' - c_1f_1' - c_2 f_2' = R_3' + \left(\ln\frac{1}{r}\right) \mu 
\end{equation}
We set 
\begin{equation} \label{eqdefscaledelta3}
\delta_3 = \left\vert \nu_3 \right\vert + \left(\ln r\right)^2 \left\| \frac{\mu}{c_0} \right\|_{L^2(0,1)} + \left\| \frac{R_3}{c_0} \right\|_{W^{1,2}(0,1)} + \frac{\left(c_1\right)^3}{ \left(c_0\right)^3} +r^{\frac{1}{4}}  \hskip.1cm. 
\end{equation}
Then, by the same arguments as above, one can pass to the limit in \eqref{eqonR3v} and \eqref{eqonR3R3v} in order to have
\begin{equation} \label{eqlimR30}
- \left(R_3^0\right)'' = \nu_3^0 f + \pi^2 R_3^0 - 2\pi \beta_3 f_1 + \mu_3  \hskip.1cm,
\end{equation}
\begin{equation} \label{eqlimdelta30}
1 = \left\vert\nu_3^0 \right\vert + \alpha_3 + \left\| R_3^0 \right\|_{W^{1,2}(0,1)} + \beta_3 \hskip.1cm,
\end{equation}
where $R_3^0$ is the strong limit of $\frac{R_3}{\delta_3 c_0}$ in $W^{1,2}$,
\begin{equation} \label{eqdefalllimits3}
\nu_3^0 = \lim_{\eps\to 0} \frac{\nu_3 + r^{\frac{1}{4}}}{\delta_3} \hskip.1cm,\hskip.1cm \alpha_3 =\lim_{\eps\to 0}\frac{\left(\ln r\right)^2}{\delta_3} \left\| \frac{\mu}{c_0} \right\|_{L^2(0,1)} \hskip.1cm,\hskip.1cm \beta_3 = \lim_{\eps\to 0} \frac{\left(c_1\right)^3}{\delta_2 \left(c_0\right)^3} 
\end{equation}
and 
$$ \frac{b\mu}{\delta_3 c_0} \to \mu_3 \text{ in } L^2(0,1) \text{ as } \eps \to 0 \hskip.1cm.$$
Now, if $\alpha_3 \neq 0$, we get that
$$ \frac{\left(c_1\right)^3}{\left(c_0\right)^3} \sim \frac{\beta_2}{\alpha_2} \left(\ln r\right)^2 \left\| \frac{\mu}{c_0}\right\|_{L^2(0,1)} $$
and the asymptotic expansion is proved. 

We now assume that $\alpha_3 = 0$. 
We integrate \eqref{eqlimR20} against $f$ and because $R_3^0(0)= R_3^0(1)=0$, and $\mu_3 =0$, we get
\begin{equation} \label{eqnu3beta3mu3} 
0 = \frac{\nu_2^0}{2} - 2\pi \int_0^1 f f_2     \hskip.1cm. 
\end{equation}
Therefore, \eqref{eqlimR20} becomes $ - \left(R_2^0\right)'' - \pi^2 R_2^0 = -2\pi \beta_2 f_1 $ and we have more precisely that $\frac{R_2 - c_3 f_3}{\delta_3 c_0}$ converges to $0$ in $W^{1,2}$, 
where $c_3 = \frac{\left(c_1\right)^3}{\left(c_0\right)^2}$, and $f_3$ is a solution of 
$$ 
- f_3'' - \pi^2 f_3 =  - 2\pi \left( f_2 - 2 \left(\int_{0}^1 f f_2\right) f \right) \hskip.1cm,
$$
which gives the asymptotic expansion.
\end{proof}

\bigskip

We know aim at applying \eqref{eqgapsigmastarsigmaeps2} with good estimates of the right-hand side. 
Going back to the notation $\phi$ of the eigenfunction associated to $\sigma_{\eps}$ on the chart $\Omega_{\eps}$, 
we first have to estimate its mean value on $I^+\cup I^-$. By taking the derivative $\phi_y$ with respect to $y$ of $\phi$ 
and then the mean value with respect to $x$, on the equation  \eqref{eqdeftheta} and denoting $y=r^v$,  we have
\begin{equation}
\label{eqderivativephibar}
\overline{\phi_y}(y) = - \frac{\sqrt{t_{\eps}}y^{-\frac{3}{2}}}{\sqrt{\eps}\sqrt{\ln\frac{1}{r}}}\left( \frac{\bar{\theta}(v)}{2} + \frac{\overline{\theta_v}(v)}{\ln\frac{1}{r}} \right) \hskip.1cm.
\end{equation}
Since $\phi$ is harmonic on $\Omega_{\eps}$, we have by integration by parts
\begin{equation} \label{eqintlaplacephi}
\eps \left( \overline{\phi_y}(1) - r^2  \overline{\phi_y}(r) \right) = - \int_{I^+\cup I^-} \partial_{\nu_{\eps}^{\pm}}\phi dl_{\eps} = - \sigma_{\eps} \int_{I^+\cup I^-} \phi dl_{\eps} \hskip.1cm.
\end{equation}
By \cref{claimassymptotic} we know that $\frac{\overline{\theta_v}}{c_0}$ converges to $f'(v) = \pi\cos(\pi v)$ in $C^{0,\frac{1}{2}}$
so that by \eqref{eqderivativephibar}, we have that
$$ 
\overline{\phi_y}(1) \sim -\left( \frac{1}{2}\overline{\phi}(1) + \sqrt{t_{\eps}}\pi c_0 \eps^{\frac{3\alpha}{2}-\frac{1}{2}} \right) \hskip.1cm,
$$
$$
\overline{\phi_y}(r) \sim - r^{-\frac{3}{2}} \left(\frac{r^{\frac{1}{2}}}{2}\overline{\phi}(r) - \sqrt{t_{\eps}}\pi c_0 \eps^{\frac{3\alpha}{2}-\frac{1}{2}} \right) 
$$
as $\eps \to 0$. 
Therefore,
$$
 \bar{\phi}(1)  \sim  2\left(\frac{\sigma_{\eps}}{\eps}\int_{I^+\cup I^-} \phi dl_{\eps} +  r\frac{\bar{\phi}(r)}{2} - (1+r^{\frac{1}{2}}) \sqrt{t_{\eps}}\pi c_0 \eps^{\frac{3\alpha}{2}-\frac{1}{2}} \right) 
 $$
so that 
\begin{equation} \label{eqmeanvaluephi}  
\int_{I^+\cup I^-} \phi dl_{\eps} \sim \eps \frac{\bar{\phi}(1)}{2\sigma_{\eps}} + \frac{\sqrt{t_{\eps}}\pi c_0}{\sigma_{\eps}} \eps^{\frac{3\alpha}{2}+\frac{1}{2}} 
\end{equation}
as $\eps\to 0$.

\bigskip

Now, as already said, we aim at obtaining a good estimate for $\delta_{\eps}$ in order to apply \eqref{eqgapsigmastarsigmaeps2}. 
We have by \eqref{eqcompatibilityp0} and \eqref{eqenergyeigenfunc5} that
\begin{equation} \label{eqenergydeltaeps}
\begin{split} \frac{\left\| \theta_x \right\|_{L^2(0,1)}^2}{\eps^2}+ \int_0^1 \frac{\overline{\left( \theta_v - \overline{\theta_v}\right)^2} }{\left(\ln r\right)^2} + \frac{\delta_{\eps}}{t_{\eps}} 
= 
c_0^2\left( \frac{\sigma_{\eps}}{t_{\eps}} - \frac{1}{8} - \frac{\pi^2}{2\left(\ln r\right)^2} + \frac{c_1}{c_0}\frac{\pi}{ \left(\ln r\right)^2} \right) \\ 
+ \frac{\left(c_0\right)^2}{\left(\ln r\right)^2} \left( \frac{\pi^2}{2} - \frac{c_1}{c_0}\pi  - \int_0^1 \overline{\left( \frac{\theta_v}{c_0} \right)}^2 dv \right) + \frac{\bar{\theta}(0)^2}{2\ln\frac{1}{r}} + O(\eps^{1+\alpha}) \hskip.1cm,
\end{split}
\end{equation}
where we chose $\eta_{\eps} = \eps^{1+\alpha}$ in \eqref{eqestimateerrore}, in order to have $e_{\eps}+r\eps\left(\ln \frac{1}{r\eps}\right)^2 = O(\eps^{1+\alpha})$ as $\eps\to 0$. 
By \eqref{eqassympexpsigma}, we have that
$$ 
c_0^2\left( \frac{\sigma_{\eps}}{t_{\eps}} - \frac{1}{8} - \frac{\pi^2}{2\left(\ln r\right)^2} + \frac{c_1}{c_0}\frac{\pi}{ \left(\ln r\right)^2} \right) 
=  
O\left(\eps^{2\alpha} \frac{\left(c_1\right)^3}{c_0} + c_0 \left\| \theta_x \right\|_{L^2} +c_0^2 \eps^2 \right) \text{ as } \eps\to 0\hskip.1cm.
$$
Finally, by \eqref{eqassympexpbarthetav},
\begin{eqnarray*}
\left\| \frac{\overline{\theta_v}}{c_0} - f' - \frac{c_1}{c_0}f_1' - \frac{\left(c_1\right)^2}{\left(c_0\right)^2} f_2' \right\|_{W^{1,2}(0,1)} & = & O\left( \frac{\left(c_1\right)^3}{\left(c_0\right)^3} + \eps^{-2\alpha} \left\| \frac{\theta_x}{c_0} \right\|_{L^2} \right)
\hskip.1cm.
\end{eqnarray*}
and we have  $\int_{0}^1 \left(f'\right)^2 = -\pi $ and $\int_{0}^1 f'\left(f_1\right)' = -\pi  $, so that
$$
 \frac{c_0^2}{\left(\ln r\right)^2} \left( \frac{\pi^2}{2}   - \frac{c_1}{c_0}\pi - \int_0^1 \overline{\left( \frac{\theta_v}{c_0} \right)^2 } dv \right) \leq  O\left( \eps^{2\alpha}  \left(c_1\right)^2 + c_0 \left\| \theta_x \right\|_{L^2}  \right) \hskip.1cm.
 $$
Gathering all the previous inequalities and $\eqref{eqmeanvaluephi}$, in order to estimate $\delta_{\eps}$ in \eqref{eqenergydeltaeps}, 
\begin{equation} \label{eqenergydeltaeps2}
\begin{split} 
\frac{\left\| \theta_x \right\|_{L^2(\widetilde{\Omega})}^2}{\eps^2}+ \int_0^1 \frac{\overline{\left( \theta_v - \overline{\theta_v}\right)^2} }{\left(\ln r\right)^2} + \frac{\delta_{\eps}}{t_{\eps}} = \frac{\bar{\theta}(0)^2}{2\ln\frac{1}{r}} + 
O\left( \eps^{2\alpha}  \left(c_1\right)^2 + c_0 \left\| \theta_x \right\|_{L^2} +\eps^{1+\alpha} \right) 
\end{split}
\end{equation}
and then, the inequality \eqref{eqgapsigmastarsigmaeps2} becomes
\begin{equation} \label{eqineqsigmastarordereps} 
\sigma_{\star} - \sigma_{\eps} \leq \eps\frac {\bar{\phi}(1)^2}{2 N} +  O\left( \frac{1}{N} \left( \eps^{2\alpha} \left(c_1\right)^2 + c_0 \left\| \theta_x \right\|_{L^2} +\eps^{1+\alpha} \right)\right)  \hskip.1cm, 
\end{equation}
where we have that $t_{\eps}\eps^{2\alpha}c_1^2 = \epsilon^{1+\alpha} \bar{\phi}(1)^2 $. 
We now choose for the rest of the argument 
$$
M = N = \frac{1}{2},
$$
which is possible thanks to Claim \ref{claimchoiceofM}. 
If we can prove that $\bar{\phi}(1) = o(1)$ as $\eps\to 0$, we can conclude the proof of \cref{thm_glue}. 
Indeed, \eqref{eqineqsigmastarordereps} would give that $\sigma_{\star} - \sigma_{\eps} = o(\eps)$ and the extra-length of $\Sigma_{\eps}$ with respect to $\Sigma$ is of order $\eps$. 

However, there is in general no reason that $\bar{\phi}(1) = o(1)$ as $\eps\to 0$. 
Indeed, using \eqref{eqenergydeltaeps2} we can prove that the $W^{1,2}$ and energy norms in the right-hand term of \eqref{eqestneibp0precise} converge to $0$ as fast as $\eps^{\frac{1}{2}}$ (see formula \eqref{eqestonc_1c_0} in Claim \ref{claimc_1c_0d_1d_0}, in the next section). 
Therefore, $\bar{\phi}(1)$ converges to $u_{\star}(p_0)$, where $u_{\star}$ is the weak limit of $u_{\eps}$ in $W^{1,2}\left(\Sigma\right)$. As a $\sigma_{\star}$-eigenfunction on $\Sigma$, $u_{\star}$ does not necessarily vanish at $p_0$. 
In the case when $\sigma_{\star}$ is simple, one can choose an attaching point $p_0$ such that $u_{\star}(p_0)=0$. 
More generally, if any eigenfunction associated to $\sigma_{\star}$ vanishes at some point $p_0$, we get the theorem.
This is not necessarily true. 

Therefore, the Poincar\'e inequality we invoked to prove \eqref{eqgapsigmastarsigmaeps} and then \eqref{eqgapsigmastarsigmaeps2} is not sufficient to get $\sigma_{\star} - \sigma_{\eps} = o(\eps)$. 
The key idea to improve this estimate is to use a better perturbation of a first eigenfunction to get good control on the mean value.
(Before, we simply did this by a constant function.)
The way we do this is by using another eigenfunction with eigenvalue close to $\sigma_\eps^1$ which perturbs the eigenvalue equation on a much smaller scale.

\section{The improved test function} \label{section6}

Recall all the choices up to this point.
We have $r_{\eps} = \exp\left(-\frac{1}{\eps^{\alpha}}\right)$, with $\alpha<\frac{1}{2}$ and $t_\eps$ chosen with the help of
\cref{claimchoiceofM} such that we have
 $u_\eps^1$ a normalized $\sigma_\eps^1$-eigenfunction with $M=N=\frac{1}{2}$.
 From here on we assume in addition that also $\alpha>\frac{1}{3}$ such that $\eps^{\frac{3\alpha}{2}+\frac{1}{2}}=o(\eps)$
 for the scale in the estimates above.

In order to conclude our main result we would like to choose a better test function by finding a good linear combination of 
the first eigenfunction and another eigenfunction with eigenvalue close to $\sigma_\eps^1$.
We would like to have a similar asymptotic expansion on the cuspidal domain available for the corresponding eigenfunction in order to
arrange for cancelations in our asymptotic estimates.
In order to do so we first need to locate another eigenfunction with some concentration of mass on the cuspidal domain.

Let us consider an eigenvalue $\sigma_{\eps}^{l}$ on $\Sigma_{\eps}$ with $2 \leq l \leq K+1$ where we recall that $K=\mult \sigma_\star$ denotes the multiplicity of
the first non-trivial eigenvalue on $\Sigma$.
In practice, $l$ could depend on $\eps$ but up to taking a subsequence we may assume that it is fixed.

By the estimates from the  previous sections, we can prove that
\begin{equation} \label{eqestonsigma2ordereps} 
\sigma_{\eps}^l =  \frac{t_{\eps}}{8} + \frac{t_{\eps} \pi^2}{2} \eps^{2\alpha} + O\left(\eps \left(  \ln\eps\right)^2 \right) 
\end{equation}
as $\eps\to 0$. 
Indeed, by \eqref{ineqsigmaeps2}, if $\sigma_{\eps}^l$ is controlled by the second term in the maximum, \eqref{eqestonsigma2ordereps} holds true (as we know the lower bound even for $\sigma_\eps^1$).
 If not, we have that $\sigma_{\eps}^l - \sigma_{\star} = O\left(\eps^{\frac{3\alpha}{2}+\frac{1}{2}}\right)$ as $\eps\to 0$. 
 Therefore, using \eqref{eqestneibpi} and \eqref{eqineqsigmastarordereps}, we have
$$
 \sigma_{\eps}^l = \sigma_{\eps}^l - \sigma_{\star} + \sigma_{\star} - \sigma_{\eps}^1 + \sigma_{\eps}^1 = \sigma_{\eps}^1 +  O\left(\eps  \left(\ln \eps\right)^2 \right)  \text{ as } \eps\to 0 \hskip.1cm,
 $$
and using \eqref{eqassympexpsigma}, we get the conclusion \eqref{eqestonsigma2ordereps}. 
In order to obtain the same type of asymptotic expansion for $\sigma_\eps^l$ and the corresponding eigenfunction as in \cref{claimassymptotic} we need to make sure that the eigenfunction has some of its mass
concentrated on the cuspidal domain.
We achieve this by choosing $l$ appropriately in the next claim.

\begin{claim}  \label{claim_choice_l}
For $\eps>0$ sufficiently small we can find $l \in \{2,\dots, K+1\}$ such that there is a normalized $\sigma_\eps^l$-eigenfunction orthogonal to $u_\eps^1$ with
$$
\int_{I_\eps^+ \cup I_\eps^-} |u_\eps^l|^2 dl_\eps \geq \frac{1}{4K}.
$$
\end{claim}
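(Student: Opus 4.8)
The plan is to argue by contradiction via a dimension count against the $K$-dimensional space $E_\star$ of $\sigma_\star$-eigenfunctions on $\Sigma$, feeding in the uniform estimates of the previous sections. First I would record that $\sigma_\eps^l \to \sigma_\star$ as $\eps \to 0$ for every $l \in \{1,\dots,K+1\}$: the lower bound $\sigma_\eps^l \ge \sigma_\eps^1 = \sigma_\star + O(\eps)$ is what was shown in \cref{section5} (via \eqref{eqineqsigmastarordereps} together with \eqref{ineqsigmaeps}) under the present normalization $M = N = \tfrac12$, while the upper bound $\sigma_\eps^l \le \sigma_\eps^{K+1} \le \sigma_\star + o(1)$ is immediate from \eqref{ineqsigmaeps2}, since $t_\eps \to t_\star = 8\sigma_\star$ by \cref{claimchoiceofM} and $(\ln r_\eps)^2 = \eps^{-2\alpha} \to \infty$, so that $\tfrac{t_\eps}{8} + \tfrac{t_\eps \pi^2}{2(\ln r_\eps)^2} \to \sigma_\star$ and the error terms in \eqref{ineqsigmaeps2} are $o(1)$.

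Next, assume the claim fails: there is a sequence $\eps_n \to 0$ such that for every $n$, every $l \in \{2,\dots,K+1\}$ and every normalized $\sigma_{\eps_n}^l$-eigenfunction $w$ orthogonal to $u^1_{\eps_n}$ one has $\int_{I^+_{\eps_n} \cup I^-_{\eps_n}} |w|^2\, dl_{\eps_n} < \tfrac{1}{4K}$. Fixing $n$, I would pick an $L^2(\partial \Sigma_{\eps_n})$-orthonormal family $\psi_1 = u^1_{\eps_n}, \psi_2, \dots, \psi_{K+1}$ of eigenfunctions whose eigenvalues exhaust $\sigma_{\eps_n}^1,\dots,\sigma_{\eps_n}^{K+1}$ counted with multiplicity; each $\psi_j$ with $j \ge 2$ is then a normalized eigenfunction orthogonal to $u^1_{\eps_n}$ for some $\sigma_{\eps_n}^l$, $l \in \{2,\dots,K+1\}$ (after relabelling if $\sigma_\eps^1$ has multiplicity), so by assumption $\int_{I^+ \cup I^-} |\psi_j|^2\, dl_{\eps_n} < \tfrac{1}{4K}$, whereas $\int_{I^+\cup I^-}|\psi_1|^2\, dl_{\eps_n} = M = \tfrac12$. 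For any unit vector $(c_j)_{j=1}^{K+1}$, the triangle inequality followed by Cauchy--Schwarz gives
$$
\int_{I^+ \cup I^-} \Bigl| \sum_j c_j \psi_j \Bigr|^2 dl_{\eps_n} \le \sum_j \int_{I^+ \cup I^-} |\psi_j|^2\, dl_{\eps_n} < \tfrac12 + K \cdot \tfrac{1}{4K} = \tfrac34 ,
$$
so, since $\partial \Sigma_{\eps_n} = (\partial \Sigma \setminus (A_0 \cup A_1)) \cup I^+_{\eps_n} \cup I^-_{\eps_n}$, every normalized $u$ in $V_n := \mathrm{span}(\psi_1,\dots,\psi_{K+1})$ satisfies $\int_{\partial \Sigma \setminus (A_0 \cup A_1)} |u|^2\, dl_g > \tfrac14$.

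The contradiction then comes from letting $n \to \infty$. Each restriction $\psi_j|_\Sigma$ is bounded in $W^{1,2}(\Sigma)$ (its Dirichlet energy over $\Sigma$ is at most $\sigma_{\eps_n}^{K+1} \le \sigma_\star + o(1)$ and its $L^2(\partial \Sigma)$-norm at most $1$), so up to a subsequence $\psi_j|_\Sigma \rightharpoonup \psi_j^\infty$ weakly in $W^{1,2}(\Sigma)$, strongly in $L^2(\partial \Sigma)$, and in $C^2_{\loc}(\Sigma \setminus \{p_0,p_1\})$. Using the pointwise bounds of \cref{claimpointwise} the boundary mass over $A_0 \cup A_1$ is $O(\eps^2 (\ln \eps)^2) \to 0$, so passing to the limit in the weak form of the eigenvalue equation on $\Sigma$ shows $\psi_j^\infty$ is a (possibly trivial) $\sigma_\star$-eigenfunction, i.e.\ $\psi_j^\infty \in E_\star$. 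As $\dim E_\star = K$ while $\psi_1^\infty, \dots, \psi_{K+1}^\infty$ are $K+1$ elements of $E_\star$, there is a unit vector $(c_j)$ with $\sum_j c_j \psi_j^\infty = 0$; then $u_n := \sum_j c_j \psi_j \in V_n$ is normalized and $u_n|_{\partial \Sigma} \to 0$ in $L^2(\partial \Sigma)$, hence $\int_{\partial \Sigma \setminus (A_0 \cup A_1)} |u_n|^2\, dl_g \to 0$, contradicting the lower bound $> \tfrac14$ just obtained. I expect the only delicate point to be the identification of the limits $\psi_j^\infty$ as $\sigma_\star$-eigenfunctions, namely controlling the degenerating collar near $p_0$ and $p_1$ so that neither boundary mass nor the eigenvalue equation is lost there — which is precisely what \cref{claimpointwise} supplies — with everything else being a soft spectral and dimension count.
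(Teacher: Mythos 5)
Your argument is correct and is essentially the same as the paper's: both proceed by contradiction, use Cauchy--Schwarz to bound $\int_{I^+\cup I^-}|\sum c_j\psi_j|^2$ by $\tfrac{3}{4}$ of the full boundary norm, extract $W^{1,2}$-weak / $L^2(\partial\Sigma)$-strong limits which are $\sigma_\star$-eigenfunctions, and then exploit $\dim E_\star = K < K+1$ to produce a normalized combination whose boundary trace collapses, contradicting the lower bound. The only cosmetic difference is that you fix a unit kernel vector $(c_j)$ in the limit and pull it back, whereas the paper chooses $\eps$-dependent coefficients satisfying $K$ orthogonality conditions against the limit eigenfunctions; these are equivalent formulations of the same dimension count.
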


\begin{proof}
We argue by contradiction and may thus take a collection $u_\eps^2,\dots u_\eps^{K+1}$ of orthonormal eigenfunctions all orthogonal to $u_\eps^1$, such that
$u_\eps^l$ is an $\sigma_\eps^l$-eigenfunction and
$$
\int_{I_\eps^+ \cup I_\eps^-} |u_\eps^l|^2 dl_\eps \leq \frac{1}{4K}
$$
for any $l=2,\dots,K+1$.
In particular, also using the corresponding bound for the first eigenfunction, we find by Cauchy--Schwarz for $w_\eps = \sum_{i=1}^{K+1}t_i u_\eps^i$ that
\begin{align} \label{eq_orthog}
\int_{I_\eps^+ \cup I_\eps^-} |w_\eps|^2 dl_\eps \leq
\int_{I_\eps^+ \cup I_\eps^-} \left( \sum_{i=1}^{K+1} t_i^2\right) \left( \sum_{i=1}^{K+1} |u_\eps^i|^2 \right)dl_\eps 
\leq \frac{3}{4} \|w_\eps\|_{L^2(\partial \Sigma_\eps)}^2 \hskip.1cm.
\end{align}
After taking appropriate subsequences we can assume that 
$u_\eps^l \to u_\star^l$ weakly in $W^{1,2}(\Sigma)$ and strongly in $L^2(\partial \Sigma)$ for $l=1,\dots,K+1$.
As in the proof of \cref{claimchoiceofM} we find that all of these are $\sigma_\star$-eigenfunctions $u_\star^l$.
But this can be seen to be impossible as follows.
We can choose $w_\eps = \sum_{i=1}^{K+1}t_i u_\eps^i$ with $\|w_\eps\|_{L^2(\partial \Sigma_\eps)}=1$ such that $\int_{\partial \Sigma} w_\eps u_\star^l=0$ for any $l=1,\dots,K+1$ since the multiplicity of $\sigma_\star$ is only $K$,
so these are in fact only $K$ linear conditions.
By strong convergence in $L^2(\partial \Sigma)$ we find that 
$
\|w_\eps\|_{L^2(\partial \Sigma)} \to 0
$
as $\eps \to 0$.
When combined with \eqref{eq_orthog} this gives
$$
\|w_\eps\|_{L^2(\partial \Sigma_\eps)}^2
\leq 
\int_{I_\eps^+ \cup I_\eps^-} |w_\eps|^2 dl_\eps + o(1)
\leq
\frac{3}{4} \|w_\eps\|_{L^2(\partial \Sigma_\eps)}^2 + o(1) \hskip.1cm,
$$
which is a contradiction for $\eps$ sufficiently small. 
\end{proof}

Thanks to \eqref{eqestonsigma2ordereps} and \cref{claim_choice_l} we have a good asymptotic expansion for $u_\eps^l$ on the cuspidal domain.
This is collected in the following claim,
where
we denote by $\theta_l$ the good representation of $u_{\eps}^l$ on the cuspidal domain given by definition \eqref{eqdeftheta}.

\begin{claim}
Let $l \in \{2,\dots,K+1\}$ be given by \cref{claim_choice_l}.
We have the following asymptotic expansion of $\sigma_{\eps}^l$
\begin{equation}\label{eqassympexpsigma2}
\frac{\sigma_{\eps}^l}{t_{\eps}} = \frac{1}{8} + \frac{\pi^2}{2\left(\ln r\right)^2} - \frac{d_1}{d_0} \frac{\pi}{\left(\ln r\right)^2} +
O\left(\frac{1}{(\ln r)^2}\left(\frac{d_1}{d_0}\right)^3+\left\|\frac{(\theta_l)_x}{d_0} \right\|_{L^2(\widetilde{\Omega})}
+\eps^2 \right)
\end{equation}
as $\eps\to 0$, the $W^{1,2}(0,1)$ asymptotic expansion of $\overline{\theta_l}$
\begin{equation}\label{eqassympexpbartheta2}
\frac{\overline{\theta_l}}{d_0} =  f + \frac{d_1}{d_0} f_1 + \left(\frac{d_1}{d_0}\right)^2 f_2 + 
O\left( \left(\frac{d_1}{d_0}\right)^3 + (\ln r)^2 \left\|\frac{(\theta_l)_x}{d_0} \right\|_{L^2(\widetilde{\Omega})} \right)  
\end{equation}
and the $W^{1,2}(0,1)$ asymptotic expansion of $\overline{\left(\theta_{l}\right)_{v}}$
\begin{equation}\label{eqassympexpbarthetav2}
\frac{\overline{\left(\theta_{l}\right)_{v}}}{d_0} =  f' + \frac{d_1}{d_0} \left(f_1\right)' + \left(\frac{d_1}{d_0}\right)^2 \left(f_2 \right)' + 
O\left( \left(\frac{d_1}{d_0}\right)^3 + (\ln r)^2 \left\|\frac{(\theta_l)_x}{d_0} \right\|_{L^2(\widetilde{\Omega})} \right)  \hskip.1cm,
\end{equation}
where $d_0 = \sqrt{M_l} = \sqrt{\int_{I^+\cup I^-}\left(\phi_l\right)^2 dl_{\eps}}$ , $d_1 = \overline{\theta_l}(0)$, and $f$, $f_1$ and $f_2$ where defined previously in \eqref{eqdeff1} and \eqref{eqdeff2}.
\end{claim}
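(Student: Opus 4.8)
The final claim is, verbatim, the analogue of \cref{claimassymptotic} with the higher eigenfunction $u_\eps^l$ in place of the first eigenfunction $u_\eps^1$, and the plan is to re-run that proof, isolating and replacing the two points where it used that $u_\eps^1$ realizes $\sigma_\eps^1$. First I would record the facts that transfer without change: $u_\eps^l$ is a nonzero Steklov eigenfunction, so $\int_{\partial\Sigma_\eps}u_\eps^l\,dl_\eps=0$ and $\int_{\Sigma_\eps}|\nabla u_\eps^l|_{g_\eps}^2\,dA_\eps=\sigma_\eps^l$; the energy identity \eqref{eqenergyeigenfunc5} holds with $(u_\eps^1,\sigma_\eps^1,M,\theta,c_0,c_1)$ replaced by $(u_\eps^l,\sigma_\eps^l,M_l,\theta_l,d_0,d_1)$, where $\phi_l:=u_\eps^l|_{\Omega_\eps}$ and $\theta_l$ is its representation via \eqref{eqdeftheta}; the pointwise bounds \eqref{eqestneibpi}, \eqref{eqestneibpi2} hold for $u_\eps^l$ since \cref{claimpointwise} covers all $l\in\{1,\dots,K+1\}$; and, combined with the compatibility conditions \eqref{eqcompatibilityp0}, \eqref{eqcompatibilityp1}, they give $|\overline{\theta_l}(0)|+|\overline{\theta_l}(1)|=O(\eps^{(1-\alpha)/2}\ln(1/\eps))$.

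Two things differ, and both are simplifications. First, no lower bound on the mass of $\theta_l$ has to be arranged by hand: \cref{claim_choice_l} gives $d_0=\sqrt{M_l}\ge(4K)^{-1/2}$, so $d_0$ is bounded away from zero; in particular $d_1/d_0\to0$, $\overline{\theta_l}(1)/d_0\to0$, and every quantity divided by $d_0$ in the argument stays harmless (whereas for $\theta$ one had to keep track of how small $c_0$ could be). Second, the one step of \cref{claimassymptotic} that really used the first-eigenfunction property — the Poincar\'e gap \eqref{eqgapsigmastarsigmaeps}--\eqref{eqgapsigmastarsigmaeps2}, invoked there only to show that $(\ln r)^2(2\sigma_\eps/t_\eps-1/4)$ is bounded below — is now supplied directly by the already-established bound \eqref{eqestonsigma2ordereps}: since $(\ln r)^2=\eps^{-2\alpha}$ and $\alpha<1/2$, that estimate reads $(\ln r)^2(2\sigma_\eps^l/t_\eps-1/4)=\pi^2+O(\eps^{1-2\alpha}(\ln\eps)^2)\to\pi^2$, which is exactly the input \eqref{eqlimiteigenvalue_0} used in the sequel. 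To control $\delta_\eps^l:=\int_\Sigma|\nabla u_\eps^l|_g^2\,dA_g-\sigma_\eps^l N_l$ from below — the role played by \eqref{eqgapsigmastarsigmaeps} — I would repeat the variational argument: since $\int_{\partial\Sigma_\eps}u_\eps^l\,dl_\eps=0$ and $|\int_{I^+\cup I^-}\phi_l\,dl_\eps|+|\int_{A_0\cup A_1}u_\eps^l\,dl_g|=O(\sqrt\eps)$, the function $u_\eps^l$ is orthogonal to constants on $\partial\Sigma$ up to $O(\sqrt\eps)$, so the variational characterization of $\sigma_\star$ gives $\int_\Sigma|\nabla u_\eps^l|^2\ge\sigma_\star(N_l-O(\eps))$ and hence $\delta_\eps^l\ge(\sigma_\star-\sigma_\eps^l)N_l-O(\eps)=-O(\eps^{2\alpha})$ (using \eqref{eqestonsigma2ordereps} and that $t_\eps=t_\star+O(\eps^{2\alpha})$, which follows from the section~5 analysis of $\sigma_\eps^1$). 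Feeding this, the bound $d_0\ge(4K)^{-1/2}$, and the identity \eqref{eqenergyeigenfunc5} into the same estimates as in \cref{claimassymptotic} yields $\|(\theta_l)_x/d_0\|_{L^2(\widetilde\Omega)}=O(\eps^{1+\alpha})$, so $(\ln r)^2\|(\theta_l)_x/d_0\|_{L^2}\to0$, and $\|\overline{(\theta_l)_v}/d_0\|_{L^2(0,1)}=O(1)$, $\|\mu_l/d_0\|_{L^2(0,1)}=O(\eps^{1+\alpha})$ via \eqref{eqestthetalinex}, with $\mu_l=2\overline{\theta_l}-\theta_l(r^{2v}/2,v)-\theta_l(-r^{2v}/2,v)$.

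From this point the proof of \cref{claimassymptotic} applies verbatim. The equations \eqref{eqonbarthetav2} and \eqref{eqonbartheta} for $\overline{\theta_l}$ and $\overline{(\theta_l)_v}$ are unchanged; using that $(\ln r)^2(2\sigma_\eps^l/t_\eps-1/4)$ is bounded and that $(\ln r)^2\|\mu_l/d_0\|_{L^2}\to0$ one gets $\overline{(\theta_l)_v}/d_0$ bounded in $W^{1,2}(0,1)$, hence $\overline{\theta_l}/d_0\to f$ strongly in $W^{1,2}(0,1)$ where $f$ solves \eqref{eqsteklovfk} with $\lambda=\pi^2$, so after fixing the sign of $u_\eps^l$ we have $f=\sin\pi v$. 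Then the successive substitutions $R_1=\overline{\theta_l}-d_0f$ with $\nu_1=(\ln r)^2(2\sigma_\eps^l/t_\eps-1/4)-\pi^2+a$, then $R_2=\overline{\theta_l}-d_0f-d_1f_1$ with $\nu_2=\nu_1+2\pi d_1/d_0$, then $R_3=\overline{\theta_l}-d_0f-d_1f_1-(d_1^2/d_0)f_2$, with the dichotomy ``$\alpha_j\ne0$'' versus ``$\alpha_j=0$'' treated exactly as there, the orthogonality relation \eqref{eqorthogonalityR1f} and its analogues carried over, and the non-degeneracy terms $\delta_j\ge r^{1/4}$ unchanged, produce \eqref{eqassympexpsigma2}, \eqref{eqassympexpbartheta2} and \eqref{eqassympexpbarthetav2}. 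The boundary conditions $f(0)=f(1)=0$ for the limiting O.D.E. come from $|\overline{\theta_l}(0)|+|\overline{\theta_l}(1)|=O(\eps^{(1-\alpha)/2}\ln(1/\eps))$ divided by $d_0\ge(4K)^{-1/2}$.

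I expect the only genuine point of attention to be organizational: checking that the proof of \cref{claimassymptotic} depends on ``$u_\eps^1$ is the first eigenfunction'' only through (i) the vanishing boundary mean value, which holds for every nonzero eigenfunction, (ii) the eigenvalue and energy upper bounds of \cref{section3}, and (iii) the Poincar\'e gap \eqref{eqgapsigmastarsigmaeps}--\eqref{eqgapsigmastarsigmaeps2}. Items (i)--(ii) transfer immediately; (iii) is replaced as above, using \eqref{eqestonsigma2ordereps} for the bound on $(\ln r)^2(2\sigma_\eps^l/t_\eps-1/4)$ and the variational characterization of $\sigma_\star$ for the lower bound on $\delta_\eps^l$. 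Once this audit is done, the remaining work is a change of symbols, made easier rather than harder by the fact that $d_0$ is bounded below whereas $c_0$ was not.
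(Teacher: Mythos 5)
Your proposal is correct and follows essentially the same route as the paper's proof: the paper likewise observes that one re-runs \cref{claimassymptotic} verbatim with indices $l$, replacing the Poincar\'e-gap input by \eqref{eqestonsigma2ordereps} (which gives $(\ln r)^2(2\sigma_\eps^l/t_\eps-1/4)\to\pi^2$ directly) and using $M_l\geq\tfrac{1}{4K}$ from \cref{claim_choice_l} in place of the tuned lower bound $M\geq\eps^{1-\tau}$, with the boundary conditions $f(0)=f(1)=0$ coming from the pointwise estimates divided by the now bounded-below mass. Your explicit variational lower bound on $\delta_\eps^l$ is a useful spelling-out of the step that the paper folds into ``the proof is the same,'' since that input is indeed still needed to extract $\|(\theta_l)_x/d_0\|_{L^2}=O(\eps^{1+\alpha})$ from \eqref{eqenergyeigenfunc5}.
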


\bigskip

\begin{proof}
The proof is the same as the proof of Claim \ref{claimassymptotic}, 
noticing that $\overline{\theta_l}$ and $\overline{\left(\theta_l\right)_v}$ satisfy the same equation as $\overline{\theta_1}$ and $\overline{\left(\theta_1\right)_v}$ (see \eqref{eqonbarthetav2} and \eqref{eqonbartheta}, 
adding the indices $l$ on $\theta$ and $\sigma_{\eps}$), 
but it starts with the estimate \eqref{eqestonsigma2ordereps} which already gives that 
$$
\left(\ln r\right)^2 \left(\frac{2\sigma_{\eps}^l}{t_{\eps}} - \frac{1}{4}  \right) = \pi^2 + O\left( \eps^{1-2\alpha}\left( \ln \eps\right)^2 \right) \hskip.1cm,
$$
as $\eps\to 0$. 
We also have the identities \eqref{eqestthetalinex} and \eqref{eqenergyeigenfunc5} (adding the indices $l$ on $\theta$, $\delta_{\eps}$, $\sigma_{\eps}$) which give thanks to our choice of $l$ by \cref{claim_choice_l} that
\begin{equation} \label{eqestonnormmu2} 
\left\| \frac{\mu_l}{\sqrt{ M_l}} \right\|_{L^2(0,1)} = O\left(\eps \left(\ln \frac{1}{r}\right)^{-1} \right) \text{ as } \eps \to 0 \hskip.1cm,  \end{equation} 
Therefore, dividing the equations by $\sqrt{M_l}$, we have a $W^{1,2}$ convergence of $\frac{\overline{\theta_l}}{\sqrt{M_l}}$ to a non-zero solution of $f'' +\pi^2 f =0$. Our choice of $l$ by \cref{claim_choice_l} gives that $f(0)=f(1) = 0$.
 Up to changing $u_{\eps}^l$ into $-u_{\eps}^l$, we can assume that $f(v) = +\sin(\pi v)$.

Then, the proof of the asymptotic expansion of $\sigma_{\eps}^l$, $\overline{\theta_l}$ and $\overline{\left(\theta_l\right)_v}$ is exactly the same as the proof of the asymptotic expansion of $\sigma_{1}^{\eps}$, $\overline{\theta_1}$ and $\overline{\left(\theta_1\right)_v}$ in  \cref{claimassymptotic}. 
\end{proof}

\bigskip

We now need to give precise estimates on $c_0$, $d_0$, $c_1$ and $d_1$ in order to work with these parameters. 
We define by $u_{\star}^i$ the strong limit in $L^2(\partial\Sigma)$ of $\frac{u_{\eps}^i}{\sqrt{N_i}}$ for $i=1,l$, 
where $M_1 = \left(c_0\right)^2$, $M_l = \left(d_0\right)^2$ and $N_1 + \left(c_0\right)^2 = N_l + \left(d_0\right)^2 = 1$.
Note that this differs from our previous convention, where we did not rescale by the mass.
Recall that $N_1=\frac{1}{2}$.
Similarly, we also have that $N_l$ is bounded away from zero.
This can be seen as follows.
Thanks to \cref{claimassymptotic} and \eqref{eqestthetalinex} we have that 
$$
 \int_{I_\eps^+ \cup I_\eps^-} \frac{\sqrt{t_\eps}y^{-\frac{1}{2}}}{\sqrt{\eps}\sqrt{\ln \frac{1}{r}}} \sin \left(  \pi \frac{\ln y}{\ln r}\right) u_\eps^i dl_\eps
=\sqrt{M_i} +o(1)
$$
as $\eps \to 0$
for $i=1,l$.
From this we conclude that $M_1+M_l \leq 1 + o(1)$, which gives that $N_l \geq \frac{1}{2}-o(1)$.

\bigskip

\begin{claim} \label{claimc_1c_0d_1d_0}
We have the following two alternatives, either 
$$ \sigma_{\star} - \sigma_{\eps}^l = o(\eps) $$
as $\eps\to 0$ or we have the estimates
\begin{equation} \label{eqestonc_1c_0} \eps^{\frac{\alpha-1}{2}}\sqrt{t_\eps} c_1 = \sqrt{1-(c_0)^2 } u_{\star}^1(p_0) +O(\eps^{\frac{1}{2}})  \end{equation}
and
\begin{equation} \label{eqestond_1d_0}  \eps^{\frac{\alpha-1}{2}}\sqrt{t_\eps} d_1 = \sqrt{1-(d_0)^2} u_{\star}^l(p_0) +O(\eps^{\frac{1}{2}}) \end{equation}
as $\eps\to 0$.
\end{claim}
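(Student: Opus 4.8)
The plan is to translate the statement, via the compatibility conditions from \cref{section4}, into a sharp estimate for the boundary averages of $u_\eps^1$ and $u_\eps^l$ at $p_0$, and then to bound the right-hand side of the pointwise estimate \eqref{eqestneibp0precise}. Since $\ln\frac1r=\eps^{-\alpha}$, condition \eqref{eqcompatibilityp0} gives $\eps^{\frac{\alpha-1}{2}}\sqrt{t_\eps}\,c_1=\overline{\phi}(1)$ for $u_\eps^1$ and $\eps^{\frac{\alpha-1}{2}}\sqrt{t_\eps}\,d_1=\overline{\phi_l}(1)$ for $u_\eps^l$, where $\overline{\phi}(1)$, $\overline{\phi_l}(1)$ are the means of $u_\eps^1\circ\varphi_0^{-1}$, $u_\eps^l\circ\varphi_0^{-1}$ over $[-\frac{\eps^2}{2},\frac{\eps^2}{2}]\times\{0\}$. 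Since $1-(c_0)^2=N_1$ and $\sqrt{N_1}\,u_\star^1$ is the weak $W^{1,2}(\Sigma)$, strong $L^2(\partial\Sigma)$ limit $u_0^1$ of $u_\eps^1$ (and similarly $u_0^l=\sqrt{1-(d_0)^2}\,u_\star^l$ is the limit of $u_\eps^l$), the two asserted formulas become $\overline{\phi}(1)=u_0^1(p_0)+O(\eps^{1/2})$ and $\overline{\phi_l}(1)=u_0^l(p_0)+O(\eps^{1/2})$. Averaging \eqref{eqestneibp0precise} (applied to $u_\eps^1$, resp.\ $u_\eps^l$, whose limits play the role of $u_\star$ there) over that interval yields
$$
\bigl|\overline{\phi}(1)-u_0^1(p_0)\bigr|\le C\Bigl(\|u_\eps^1-u_0^1\|_{W^{1,2}(\Sigma)}+\|\nabla\phi_\eps\|_{L^2(F_\eps)}+|\sigma_\eps^1-\sigma_\star|+\eps+b_\eps\ln\tfrac1\eps\Bigr),
$$
and its analogue for $\overline{\phi_l}(1)$; it remains to show every term on the right is $O(\eps^{1/2})$.

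The argument has two steps. First I would establish $\overline{\phi}(1)=O(1)$: all the error terms above are trivially $O(1)$ — for $b_\eps=\frac{\eps}{\pi}\overline{\phi_y}(1)$ one combines \eqref{eqderivativephibar}, $c_1=O(\eps^{\frac{1-\alpha}{2}}\overline{\phi}(1))$, the crude bound \eqref{eqestneibpi}, and $\alpha>\frac13$ to get $b_\eps\ln\frac1\eps=O(\eps(\ln\frac1\eps)^2)=O(1)$ — so the displayed inequality forces $\overline{\phi}(1)=O(1)$. Next, with $\overline{\phi}(1)=O(1)$ in hand, $\frac{\overline{\theta}(0)^2}{2\ln\frac1r}=\frac{\eps}{2t_\eps}\overline{\phi}(1)^2=O(\eps)$, so the refined energy identity \eqref{eqenergydeltaeps2}, together with the lower bound \eqref{eqgapsigmastarsigmaeps2} on $\sigma_\star-\sigma_\eps^1$ and the upper bound \eqref{ineqsigmaeps}, yields $|\sigma_\eps^1-\sigma_\star|=O(\eps)$, $|\delta_\eps|=O(\eps)$, $\eps^{-2}\|\theta_x\|_{L^2(\widetilde\Omega)}^2=O(\eps)$ and $(\ln r)^{-2}\int_0^1\overline{(\theta_v-\overline{\theta_v})^2}\,dv=O(\eps)$; in particular now $b_\eps=O(\eps)$ by \eqref{eqderivativephibar} and the $\mathcal C^{0,1/2}$ convergence $\overline{\theta_v}/c_0\to f'$ of \cref{claimassymptotic}, so $b_\eps\ln\frac1\eps=O(\eps^{1/2})$. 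The collar energy is estimated from \eqref{eqgradtheta} restricted to $0\le v\le v_\eps$, $v_\eps=\Theta(\eps^{1+\alpha})$ (the $v$-range of $F_\eps$): the $\theta_x$-part is $\le t_\eps\eps^{-2}\|\theta_x\|_{L^2(\widetilde\Omega)}^2=O(\eps)$, while on $[0,v_\eps]$ one has $|\tfrac{\overline{\theta}}{2}+\tfrac{\overline{\theta_v}}{\ln\frac1r}|=O(\eps^{\frac{1-\alpha}{2}})$ and the fluctuation term is $O(\eps)$ by \eqref{eqestthetalinex} and \eqref{eqenergydeltaeps2}, so $\|\nabla\phi_\eps\|_{L^2(F_\eps)}=O(\eps^{1/2})$. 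Finally $u_\eps^1\rightharpoonup u_0^1$ weakly in $W^{1,2}(\Sigma)$ while $\int_\Sigma|\nabla u_\eps^1|^2\,dA_g=\delta_\eps+\sigma_\eps^1 N_1$ differs from $\int_\Sigma|\nabla u_0^1|^2\,dA_g=\sigma_\star N_1$ by $O(\eps)$, whence $\|u_\eps^1-u_0^1\|_{W^{1,2}(\Sigma)}^2=O(\eps)$. This proves \eqref{eqestonc_1c_0}.

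For $u_\eps^l$ the only change is that one controls $\sigma_\eps^l$ through \eqref{eqestonsigma2ordereps} rather than \eqref{ineqsigmaeps}: subtracting \eqref{eqestonsigma2ordereps} from the expansion \eqref{eqassympexpsigma} of $\sigma_\eps^1$, and using $c_1=O(\eps^{\frac{1-\alpha}{2}})$ together with $\alpha>\frac13$, gives $\sigma_\eps^l-\sigma_\eps^1=o(\eps)$, hence $|\sigma_\eps^l-\sigma_\star|=O(\eps)$, and the two steps above then run verbatim for $u_\eps^l$ with the asymptotic expansion of $\overline{\theta_l}$, yielding \eqref{eqestond_1d_0}. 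The dichotomy in the statement reflects that $\sigma_\eps^l-\sigma_\eps^1=o(\eps)$ also makes the first alternative $\sigma_\star-\sigma_\eps^l=o(\eps)$ equivalent to $\sigma_\star-\sigma_\eps^1=o(\eps)$, which — via the extra boundary length $L_{g'}(\partial\Sigma')=1+\Theta(\eps)$ — already gives $\sigma_\eps^1 L_{g'}(\partial\Sigma')>\sigma_\star=\sigma_\star L_g(\partial\Sigma)$ and hence \cref{thm_glue}, so in that branch the estimates are not needed.

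I expect the main obstacle to be obtaining the rate $O(\eps^{1/2})$, rather than merely $o(1)$, for $\|u_\eps^i-u_0^i\|_{W^{1,2}(\Sigma)}$ and for the collar energy $\|\nabla\phi_\eps\|_{L^2(F_\eps)}$: this is what forces one to route the argument through the refined energy identity \eqref{eqenergydeltaeps2} (and, for $u_\eps^l$, through the precise expansion \eqref{eqassympexpsigma} pinning down $t_\eps$) rather than through softer compactness, and it is also where the exact choice $r_\eps=\exp(-\eps^{-\alpha})$ and the constraint $\frac13<\alpha<\frac12$ enter.
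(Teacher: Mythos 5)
Your overall scaffolding is right: \eqref{eqcompatibilityp0} reduces the claim to showing $\overline{\phi}(1)=u_0^1(p_0)+O(\eps^{1/2})$ (with $u_0^1=\sqrt{N_1}\,u_\star^1$), and you then need each term on the right of \eqref{eqestneibp0precise} to be $O(\eps^{1/2})$. Your estimates for $b_\eps\ln\tfrac1\eps$, for the collar energy $\|\nabla\phi_\eps\|_{L^2(F_\eps)}$ (which matches the paper's \eqref{eqnablaphiverysmall}), and for $|\sigma_\eps^i-\sigma_\star|$ are sound and essentially those of the paper.

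However, there is a genuine gap in the step that matters most: your deduction $\|u_\eps^1-u_0^1\|_{W^{1,2}(\Sigma)}^2=O(\eps)$ from weak $W^{1,2}$ convergence plus an $O(\eps)$ discrepancy in Dirichlet energies is invalid. In a Hilbert space one has
$\|\nabla(u_\eps^1-u_0^1)\|_{L^2}^2
=\bigl(\|\nabla u_\eps^1\|^2-\|\nabla u_0^1\|^2\bigr)-2\int_\Sigma\langle\nabla(u_\eps^1-u_0^1),\nabla u_0^1\rangle$,
and while the first bracket is $O(\eps)$ as you say, the cross term only tends to zero by weak convergence, with no a priori rate. Integration by parts (using that $u_0^1$ is a $\sigma_\star$-eigenfunction) converts it into $\sigma_\star\int_{\partial\Sigma}(u_\eps^1-u_0^1)u_0^1$, so you end up controlling the $W^{1,2}$ norm by the $L^2(\partial\Sigma)$ norm of the same difference, up to $O(\eps)$ — this is precisely \eqref{eq_R_energy}. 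You then still have to establish $\|u_\eps^i-\sqrt{N_i}u_\star^i\|_{L^2(\partial\Sigma)}=O(\eps^{1/2})$, which is the actual content of the claim and the point where the dichotomy enters. The paper does this by a contradiction argument: assuming $\eps^{1/2}=o(\|u_\eps^i-\sqrt{N_i}u_\star^i\|_{L^2(\partial\Sigma)})$ and $\eps=O(\sigma_\star-\sigma_\eps^i)$, it integrates the equation against a $\sigma_\star$-eigenfunction $v$ with $v(p_0)=0$ to get $(\sigma_\star-\sigma_\eps^i)\int_{\partial\Sigma}v\,u_\eps^i=O(\eps^2\sqrt{N_i})$, concludes that $u_\star^i$ lies in the orthogonal complement of $F_\star=\{v\in E_\star:v(p_0)=0\}$, and then shows the rescaled residual $R_\eps^i=u_\eps^i-P_{E_\star}(u_\eps^i)$ would converge to a nonzero $\sigma_\star$-eigenfunction orthogonal to $E_\star$, which is impossible. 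This orthogonality/residual argument is entirely absent from your proposal; without it the claimed $O(\eps^{1/2})$ rate in \eqref{eqestonc_1c_0} and \eqref{eqestond_1d_0} does not follow, and indeed the dichotomy (the alternative $\sigma_\star-\sigma_\eps^l=o(\eps)$) would never appear, since your version never encounters the branching.
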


\bigskip

\begin{proof}
We go back to \eqref{eqestneibp0precise} in Claim \ref{claimpointwise}, which holds for $i=1,l$ and gives, after incorporating the estimates from the end of \cref{section5}, that
\begin{equation} \label{eqestneibp0precise2} 
\left\vert \left( u^i_{\eps}\circ \varphi_0^{-1} \right) (x) - \sqrt{N_i}u^i_{\star}(p_0)  \right\vert \leq C \left(\left\| u^i_{\eps} - \sqrt{N_i} u^i_{\star} \right\|_{W^{1,2}(\Sigma)} + \left\| \nabla \phi_i \right\|_{L^{2}(F_{\eps})} + \eps\ln \frac{1}{\eps} \right) 
 \end{equation}
for any $x \in \varphi_0^{-1}\left(\mathbb{D}_{\eps^2}\right)$ where $F_{\eps} = \{(x,y)\in \Omega ; 1-\eps \leq y\leq 1\}$. We also have the estimate \eqref{eqenergydeltaeps2}, which is also true for $i=1,l$,
\begin{equation} \label{eqenergydeltaeps3}
 \frac{\left\| \left(\theta_i\right)_x \right\|_{L^2(\widetilde{\Omega})}^2}{\eps^2}+ \int_0^1 \frac{\overline{\left( \left(\theta_i\right)_v - \overline{\left(\theta_i\right)_v}\right)^2} }{\left(\ln r\right)^2} + \frac{\left(\delta_i\right)_{\eps}}{t_{\eps}} = \frac{\eps}{t_{\eps}}\frac{\overline{\phi_i}(1)^2}{2} + 
 O\left( \eps^{1+ \alpha}\overline{\phi_i}(1)^2 + \eps^{1 + \alpha} \right) 
\end{equation}
as $\eps\to 0$. By the same computations as in \cref{section5}, on $\theta_i$, we know that
\begin{eqnarray*} 
\frac{1}{t_{\eps}} \left\| \nabla \phi_i  \right\|_{L^2\left(F_{\eps} \right)}^2 & = & 
\left( \frac{1}{\eps^2}\int_{0}^{\frac{\ln\left(1-\eps \right)}{\ln r}} \int_{-\frac{r^{2v}}{2}}^{\frac{r^{2v}}{2}} \left(\theta_i\right)_x^2 dx dv + \int_{0}^{\frac{\ln\left(1-\eps \right)}{\ln r}} \overline{\left(\frac{\theta_i}{2} + \frac{\left(\theta_i\right)_v}{\ln\frac{1}{r}} \right)^2} dv\right) 
\\
& \leq & \frac{\left\| \left(\theta_i\right)_x  \right\|_{L^2\left(\widetilde{\Omega}\right)}^2}{\eps^2} + \frac{1}{\left(\ln r\right)^2}\int_{0}^1 \overline{\left(\left(\theta_i\right)_v - \overline{\left(\theta_i \right)_v}\right)^2}dv  \\
& & \quad+ \frac{1}{\left(\ln r\right)^2} \int_{0}^{\frac{\ln\left(1-\eps \right)}{\ln r}} \left(\overline{\left(\theta_i\right)_v}\right)^2 dv + \frac{\left( \overline{\theta^i}^2\left(\frac{\ln\left(1-\eps \right)}{\ln r}\right) - \overline{\theta_i}^2(0)\right)}{2\ln\frac{1}{r}}
\\ 
& & \quad+   \frac{1}{4}\int_{0}^{\frac{\ln\left(1-\eps \right)}{\ln r}} \overline{\theta_i}^2dv + O\left( \left\| \left(\theta_i\right)_x  \right\|_{L^2\left(\widetilde{\Omega}\right)} \right) 
\end{eqnarray*}
as $\eps\to 0$ for $i=1,l$. 
By \eqref{eqenergydeltaeps3} and the asymptotic analysis on $\overline{\theta_i}$ and $\overline{\left(\theta_i\right)_v}$ we have
\begin{equation} \label{eqnablaphiverysmall} \left\| \nabla \phi_i \right\|_{L^2\left(F_{\eps} \right)} = O\left(\eps^{\frac{1}{2}} \right) \end{equation} 
as $\eps\to 0$. Let $v$ be a first eigenfunction associated to $\sigma_{\star}$, bounded in $W^{1,2}(\Sigma)$. 
We have
\begin{equation} \label{eq_R_energy}
\begin{split}
\int_{\Sigma} \left\vert \nabla \left( u^i_{\eps} - v \right) \right\vert^2 
& = \int_{\Sigma} \left\vert \nabla u^i_{\eps} \right\vert^2 + \int_{\Sigma} \left\vert \nabla v\right\vert^2  - 2\int_{\Sigma} \left\langle \nabla  u^i_{\eps} , \nabla v \right\rangle 
\\
& = \left(\delta_i\right)_{\eps} + \sigma_{\eps}\int_{\partial\Sigma} \left(u^i_{\eps}\right)^2 + \sigma_{\star}\int_{\partial\Sigma} v^2 - 2 \sigma_{\star} \int_{\partial\Sigma} v u^i_{\eps} + O\left(\eps^2\ln\frac{1}{\eps}\right) 
\\
& = \left(\delta_i\right)_{\eps} + \left(\sigma^i_{\eps}-\sigma_{\star}\right)\int_{\partial\Sigma} \left(u^i_{\eps}\right)^2 + \sigma_{\star}\int_{\partial\Sigma} \left(v - u^i_{\eps} \right)^2 + O\left(\eps^2\ln\frac{1}{\eps}\right) 
\\
& \leq  C \frac{\eps}{t_{\eps}}\frac{\overline{\phi_i}(1)^2}{2} + \sigma_{\star}\int_{\partial\Sigma} \left(v - u^i_{\eps} \right)^2 +  O\left( \eps^{1+\alpha}\overline{\phi_i}(1)^2 + \eps^{1+\alpha}\right)  \hskip.1cm,
\end{split}
\end{equation}
where the second equality comes from integrating by parts and the pointwise estimate \eqref{eqestneibpi} on $u^i_{\eps}$ and the fourth inequality uses \eqref{eqineqsigmastarordereps} and \eqref{eqenergydeltaeps3}. 
In particular, for $v=\sqrt{N_i} u^i_{\star}$, this means that the right-hand term of \eqref{eqestneibp0precise2} converges to $0$ and that the $W^{1,2}$-norm of $u^i_{\eps} - \sqrt{N_i} u^i_{\star}$ is controled by its $L^2$-norm up to a term of order $\eps^{\frac{1}{2}}$. 

Let us now prove that either $\sigma_{\star} - \sigma^i_{\eps} = o(\eps)$ or $\left\| u^i_{\eps} - \sqrt{N_i} u^i_{\star} \right\|_{L^2(\partial{\Sigma})} = O\left(\eps^{\frac{1}{2}}\right)$ as $\eps\to 0$. 
This will complete the proof of Claim \ref{claimc_1c_0d_1d_0}.
By contradiction, we assume 
\begin{equation}\label{eqabsurdassumption} 
\eps^{\frac{1}{2}} = o\left(\left\| u^i_{\eps} - \sqrt{N_i}u^i_{\star} \right\|_{L^2(\partial{\Sigma})}\right) \text{ and } \eps = O\left( \sigma_{\star}-\sigma_{\eps}^i \right) \text{ as } \eps\to 0 \hskip.1cm. 
\end{equation}
We integrate the equation satisfied by $u_{\eps}^i$ on $\Sigma \setminus \left(\varphi_0^{-1}(\mathbb{D}^+_{\eps^2}) \cup \varphi_1^{-1}\left(\mathbb{D}^+_{r^2 \eps^2}\right)\right)$, 
against a first eigenfunction $v$ associated to $\sigma_{\star}$ and we get thanks to \eqref{eqestneibp0precise2} combined with the estimates above that
$$ 
\left(\sigma_{\star} - \sigma_{\eps}^i \right) \int_{\partial\Sigma} v u_{\eps}^i dl_g + O\left(\eps^2 \sqrt{N_i}\right) = \int_{\varphi_0^{-1}(\mathbb{S}^+_{\eps^2}) \cup \varphi_1^{-1}\left(\mathbb{S}^+_{r^2 \eps^2}\right)} \left( v\partial_{\nu} u_{\eps}^i - u_{\eps}^i\partial_{\nu} v \right) \hskip.1cm. 
$$
By elliptic theory at the scale $\eps^2$ at the neighborhood of $p_0$, we have the following gradient estimate
\begin{equation} \label{eqpointwisegradient} 
\eps^4 \left\vert \nabla  u_{\eps}^i  \right\vert^2(\varphi_0^{-1}(x)) \leq C \left(\eps + \int_{\partial\Sigma} \left(u^i_{\eps} \right)^2 \right) 
\end{equation}
for any $x \in \mathbb{D}^+_{2\eps^2} \setminus \mathbb{D}^+_{\frac{2\eps^2}{3}}$ and at the scale $r^2\eps^2$ at the neighbourhood of $p_1$
\begin{equation} \label{eqpointwisegradient2} 
\eps^4r^4 \left\vert \nabla\left(u_{\eps}^i \right) \right\vert^2(\varphi_1^{-1}(x)) \leq C \left(\eps + \int_{\partial\Sigma} \left( u^i_{\eps} \right)^2 \right) 
\end{equation}
for any $x \in \mathbb{D}^+_{2r^2\eps^2} \setminus \mathbb{D}^+_{\frac{2r^2\eps^2}{3}}$. 
We simply have that
\begin{eqnarray*} 
\left(\sigma_{\star} - \sigma_{\eps}^i\right) \int_{\partial\Sigma} v u_{\eps}^i dl_g  &= &  \int_{\varphi_0^{-1}(\mathbb{S}^+_{\eps^2})} v\partial_{\nu}u_{\eps}^i + \int_{\varphi_1^{-1}(\mathbb{S}^+_{r^2\eps^2})} \left(v-v(p_1)\right)\partial_{\nu}u_{\eps}^i \\
& & \quad+ v(p_1)\int_{\varphi_1^{-1}(\mathbb{S}_{r^2\eps^2})} \partial_{\nu}u_{\eps}^i 
+  O\left(\eps^2 \sqrt{N_i} \right) \hskip.1cm.
\end{eqnarray*}
By integration by parts combined with the pointwise estimates from \cref{claimpointwise} we also have that
$$
 v(p_1)\int_{\varphi_1^{-1}(\mathbb{S}_{r^2\eps^2})} \partial_{\nu}u_{\eps}^i = - v(p_1) \eps r^2 \overline{\phi_y} + O\left( r^2 \eps^2 \log\left( \frac{1}{r\eps} \right) \right)
 = 
 O(r) \text{ as } \eps\to 0\hskip.1cm,
 $$
We also have that $v-v(p_1) = O\left(r^2\eps^2\right)$, uniformly on $\mathbb{S}_{r^2\eps^2}^+$.
If we assume in addition that $v$ satisfies $v(p_0)= 0$, we have that $\vert v \vert = O(\eps^2)$ uniformly on $\mathbb{S}_{\eps^2}^+$.
Therefore, by the uniform estimates \eqref{eqpointwisegradient} and \eqref{eqpointwisegradient2} on the gradient, we have for such $v$ that
\begin{equation} \label{eqveepsi} \left(\sigma_{\star} - \sigma_{\eps}^i\right) \int_{\partial\Sigma} v u_{\eps}^i dl_g = O\left(\eps^2 \sqrt{N_i} \right) \end{equation}
as $\eps\to 0$.

For any closed set $E$ in $L^2\left(\partial\Sigma \right)$, we denote by $P_{E}$ be the orthogonal projection in $L^2$ on $E$. Let $E_{\star}$ be the space generated by the eigenfunctions associated to $\sigma_{\star}$. 
The space $F_{\star} = \{v\in E_{\star} :  v(p_0) = 0\}$, has codimension at most $1$ in $E_{\star}$. 
By  the assumption \eqref{eqabsurdassumption} on the distance of the eigenvalues and \eqref{eqveepsi}, we have that
$$ 
\left\| P_{F_{\star}}(u_{\eps}^i) \right\|_{L^2(\partial\Sigma)} = O\left(\eps \sqrt{N_i}\right)
$$
as $\eps\to 0$. 
Therefore, now by our assumption on the eigenfunction from \eqref{eqabsurdassumption}, we have that $u^i_{\star} \in F_{\star}^{\perp}$.
Moreover, since $\left\|\sqrt{N_i}u^i_{\star}  \right\|_{L^2(\partial{\Sigma})} =  \left\| u^i_{\eps} \right\|_{L^2(\partial{\Sigma})}$ we also have that
$$
 \left\| P_{E_{\star}}\left(u_{\eps}^i\right) - \sqrt{N_i}u^i_{\star} \right\|_{L^2(\partial{\Sigma})} = O\left(\left\| u^i_{\eps} - \sqrt{N_i}u^i_{\star} \right\|_{L^2(\partial{\Sigma})}^2 + \sqrt{N_i} \eps \right) 
$$
as $\eps\to 0$. 
Since by \eqref{eqabsurdassumption}, $\eps = O\left(\left\| u^i_{\eps} - \sqrt{N_i}u^i_{\star} \right\|_{L^2(\partial{\Sigma})}^2\right)$, we have 
$$ 
\left\|  u^i_{\eps} - P_{E_{\star}}\left(u_{\eps}^i\right) \right\|_{L^2(\partial{\Sigma})} \sim \left\| u_{\eps}^i - \sqrt{N_i}u^i_{\star} \right\|_{L^2(\partial{\Sigma})} \text{ as } \eps\to 0\hskip.1cm. 
$$
We have the following equation on $R^i_{\eps} = u_{\eps}^i - P_{E_{\star}}\left(u_{\eps}^i\right)$
\begin{equation} \label{eqontherestonsigma} \begin{cases}
 \Delta_g  R^i_{\eps} = 0 &  \text{in} \ \Sigma
\\
\partial_{\nu} R^i_{\eps} = \sigma_{\eps}R^i_{\eps} - \left(\sigma_{\star}-\sigma^i_{\eps}\right)P_{E_{\star}}\left(u_{\eps}^i\right) & \text{ on } \ \partial\Sigma \setminus \left(A_0\cup A_1\right) \hskip.1cm.  \\
\end{cases} \end{equation} 
We have from \eqref{eq_R_energy} applied to $v =  P_{E_{\star}}\left(u_{\eps}^i\right)$ combined with 
$\|R^i_\eps\|_{L^2(\partial \Sigma)} \gtrsim \eps^{\frac{1}{2}}$ 
that $\frac{R^i_{\eps}}{\left\| R^i_{\eps} \right\|_{L^2(\partial{\Sigma})}}$ is uniformly bounded in $W^{1,2}(\Sigma)$. 
Therefore, we may take
$$
\frac{R^i_{\eps}}{\left\| R^i_{\eps} \right\|_{L^2(\partial{\Sigma})}} 
\to
R^i_{\star}
$$
weakly in $W^{1,2}(\Sigma)$ and strongly in $L^2(\partial \Sigma)$.
By the strong convergence in $L^2(\partial \Sigma)$ we have that $\left\| R^i_{\star} \right\|_{L^2(\partial{\Sigma})} = 1$. 
Since $\sigma_{\star}-\sigma^i_{\eps} = O\left(\eps\right)$ as $\eps\to 0$, by standard elliptic theory on any compact subset of $\Sigma\setminus \{p_0,p_1\}$, we get the following equation at the limit
\begin{equation} \label{eqontherestonsigmastar}
	 \begin{cases}
 		\Delta_g  R^i_{\star} = 0 &  \text{in} \ \Sigma\setminus\{p_0,p_1\}
		\\
		\partial_{\nu} R^i_{\star} = \sigma_{\star}R^i_{\star}  & \text{ on } \ \partial\Sigma \setminus\{p_0,p_1\} \hskip.1cm.  \\
	\end{cases} 
\end{equation} 
Since $R^i_{\star} \in W^{1,2}(\Sigma)$, the equation \eqref{eqontherestonsigmastar} holds on all of $\Sigma$. 
Then, since we have that $R^i_{\star}$ is orthogonal to the eigenspace associated to $\sigma_{\star}$ by construction, 
we must in fact have that $R^{i}_{\star} = 0$. 
This contradicts that $\left\| R^i_{\eps} \right\|_{L^2(\partial{\Sigma})} = 1$.

Therefore, either $\sigma_{\star} - \sigma^i_{\eps} = o(\eps)$ or $\left\| u^i_{\eps} - \sqrt{N_i} u^i_{\star} \right\|_{L^2(\partial{\Sigma})} = O\left(\eps^{\frac{1}{2}}\right)$ as $\eps\to 0$. 
This and \eqref{eqnablaphiverysmall} applied to \eqref{eqestneibp0precise2} completes the proof of \cref{claimc_1c_0d_1d_0}.
\end{proof}

\bigskip

We are now in position to prove the theorem. 
Of course we may assume that
$$
\eps = O(\sigma_\star - \sigma_\eps^l)
$$
since there is nothing to prove otherwise, as we also have that $\sigma_\eps^l - \sigma_\eps^1 = o(\eps)$.
Therefore, we may assume that we have \eqref{eqestonc_1c_0} and \eqref{eqestond_1d_0}.

Recall that the main problem in testing $u_{\eps}^1$ in the variational characterization of the first eigenvalue $\sigma_{\star}$ on $\Sigma_{\star}$ gives an estimate of order $\eps$, as soon as $u_{\star}^1(p_0)\neq 0$ (thanks to the estimate \eqref{eqenergydeltaeps2} and the Poincar\'e inequality). 
We now show that the function $\Psi = u_{\eps}^l + \gamma u_{\eps}^1 $ where $\gamma = - \frac{d_1}{c_1}$ is a better test function, because it is a linear combination of $u_{\eps}^1$ and $u_{\eps}^l$ such that $\overline{\Psi}(1)=0$ for any $\eps$. 
Thanks to \eqref{eqmeanvaluephi}, we can compute the asymptotic expansion of the mean value of $\Psi$ on $\partial \Sigma$
\begin{equation} \label{eqassympexpangamma} 
\int_{\partial \Sigma} \Psi dl_g = - \left(\gamma \int_{I_\eps^+\cup I_\eps^-} \phi_1 dl_{\eps} + \int_{I_\eps^+\cup I_\eps^-}\phi_l dl_{\eps} \right) + O(\eps^2) =  O\left(\eps^{\frac{3\alpha}{2}+\frac{1}{2}}\right)
\end{equation}
as $\eps\to 0$. 
Testing $\Psi$ in the variational characterization of the first non-zero eigenvalue $\sigma_{\star}$ on $\Sigma$, 
we then have that
$$
\sigma_{\star} \leq \frac{\int_{\Sigma} \left\vert \nabla \Psi \right\vert_g^2 dA_g}{\int_{\partial\Sigma}\Psi^2 dl_g - \left(\int_{\partial \Sigma} \Psi dl_g\right)^2} =  \frac{\gamma^2 \sigma_{\eps}^1 +  \sigma_{\eps}^l - \int_{\Omega} \left\vert \nabla \Psi \right\vert_{g_{\eps}}^2 dA_{g_{\eps}}}{\gamma^2 + 1 - \int_{I^+ \cup I^-}\Psi^2 dl_{g_{\eps}} + O\left( \eps^{2}\right)}  \hskip.1cm, 
$$
where
$$
\int_{I^+\cup I^-}\Psi^2 dl_{g_{\eps}} + \int_{\partial\Sigma}\Psi^2 dl_g = \gamma^2+1 + O(\eps^2)
$$
and
$$
 \int_{\Sigma} \left\vert \nabla \Psi \right\vert_g^2 dA_g + \int_{\Omega} \left\vert \nabla \Psi \right\vert_g^2 dA_g = \gamma^2  \sigma_{\eps}^1 + \sigma_{\eps}^l
 $$
because $u_{\eps}^1$ and $u_{\eps}^l$ have unit $L^2$-norm and are orthogonal in $L^2(\partial \Sigma_{\eps})$, 
we then get
\begin{equation} \label{eqestsigmastar}
\sigma_{\star} \leq \frac{\gamma^2 \sigma_{\eps}^1 +  \sigma_{\eps}^l}{1+\gamma^2} + \frac{\left(\gamma^2\sigma_{\eps}^1 +  \sigma_{\eps}^l\right)\int_{I^+ \cup I^-}\Psi^2 dl_{g_{\eps}} - \left(\gamma^2+1\right) \int_{\Omega} \left\vert \nabla \Psi \right\vert_{g_{\eps}}^2 dA_{g_{\eps}}}{\left(\gamma^2+1\right)\left(\gamma^2+1 - \int_{I^+ \cup I^-}\Psi^2 dl_{g_{\eps}}\right) + O\left( \eps^{2}\right)} \hskip.1cm.
\end{equation}
We set
$$ 
A = \left(\gamma^2 \sigma_{\eps}^1 + \sigma_{\eps}^2\right)\int_{I^+ \cup I^-}\Psi^2 dl_{g_{\eps}} - \left(\gamma^2 +1\right) \int_{\Omega} \left\vert \nabla \Psi \right\vert_{g_{\eps}}^2 dA_{g_{\eps}} 
$$
and
$$ 
B = \gamma^2 +1 - \int_{I^+ \cup I^-}\Psi^2 dl_{g_{\eps}}  \hskip.1cm.
$$
We aim at getting an upper bound on $A$ and a lower bound on $B$. 
Let $\Theta$ be defined by
$$ 
\Psi(x,y) = \frac{\sqrt{t_{\eps}}y^{-\frac{1}{2}}}{\sqrt{\eps}\sqrt{\ln\frac{1}{r}}} \Theta \left(x, \frac{\ln(y)}{\ln(r)} \right) \hskip.1cm.
$$
By our computations in the preceeding section (see \eqref{eqenergyeigenfunc2}, \eqref{eqenergyeigenfunc3}, \eqref{eqenergyeigenfunc4} and \eqref{eqenergyeigenfunc5}), we have that 
\begin{equation} \label{eqL2normofPsi} 
\int_{I^+ \cup I^-}\Psi^2 dl_{g_{\eps}} = \int_{0}^1 \left(\Theta^2\left(\frac{r^{2v}}{2},v\right)+\Theta^2\left(-\frac{r^{2v}}{2},v\right)\right) = 2 \int_{0}^1\overline{\Theta}^2 dv + O\left(\left\| \Theta_x \right\|_{L^2(\widetilde{\Omega})} \right) 
\end{equation}
and that
\begin{equation}\label{eqenergyofPsi}
 \begin{split} 
 \int_{\Omega} \left\vert \nabla \Psi \right\vert_{g_{\eps}}^2 dA_{g_{\eps}} = t_{\eps}\left(\frac{1}{4} \int_{0}^1 \overline{\Theta}^2 dv + \frac{1}{\left(\ln r\right)^2}\int_0^1 \overline{\Theta_v}^2dv - \frac{\overline{\Theta}(0)^2}{2\ln\frac{1}{r}}\right) \\ + t_{\eps} \left( \frac{\int_{\widetilde{\Omega}} \Theta_x^2}{\eps^2}  + \frac{1}{\left(\ln r\right)^2}\int_{0}^{1} \overline{\Theta_v^2 - \overline{\Theta_v}^2}dv + e_{\eps} \right) \hskip.1cm,
\end{split}
\end{equation}
where $e_{\eps}$ is a very small error term compared to $\frac{\int_{\widetilde{\Omega}} \Theta_x^2}{\eps^2}$. 
Therefore, we have that
\begin{equation}\label{eqestnumerator} 
A \leq \left(\gamma^2\left(2\sigma_{\eps}^1 - \frac{t_{\eps}}{4}\right) + \left(2\sigma_{\eps}^2 - \frac{t_{\eps}}{4}\right)\right) \int_{0}^1 \overline{\Theta}^2 dv - \frac{(\gamma^2+1)t_{\eps}}{\left(\ln r\right)^2} \int_0^1 \overline{\Theta_v}^2dv \hskip.1cm,
 \end{equation}
where we recall that $\gamma$ was defined so that $\overline{\Theta}(0) = 0$
Since $\left\| \Theta_x \right\|_{L^2(0,1)}=O\left( \eps \right)$
we also have that
\begin{equation} \label{eqestdenominator} 
B = \gamma^2+1 - 2 \int_{0}^1\overline{\Theta}^2 dv + O\left(\eps \right)
\end{equation}
as $\eps\to 0$. 
Because of the asymptotic expansions of $\sigma_{i}^{\eps}$, $\bar{\theta^i}$, and $\overline{\theta_{v}^i}$ for $i=1,l$, we get 
$$ 
\int_{0}^1 \overline{\Theta}^2 dv = \left(e_0\right)^2 \int_{0}^1 f^2 + 2 e_0 e_2 \int_{0}^1 f f_2 + O\left( \left\vert e_3e_0 \right\vert \right)\hskip.1cm,
$$
$$ 
\int_{0}^1 \overline{\Theta_v}^2 dv = \left(e_0\right)^2 \int_{0}^1 \left(f'\right)^2  + 2 e_0 e_2 \int_{0}^1 f' f_2' + O\left( \left\vert e_3e_0 \right\vert  \right) \hskip.1cm,
$$
$$ 
\left(\gamma^2\left(2\sigma_{\eps}^1 - \frac{t_{\eps}}{4}\right) + \left(2\sigma_{\eps}^l - \frac{t_{\eps}}{4}\right)\right) = \frac{t_{\eps}}{\left(\ln r\right)^2}\left( \pi^2 (1+\gamma^2) - 2\pi \tilde{e}_1 \right) + O\left( \tilde{e}_3 \right) \hskip.1cm,
$$
as $\eps\to 0$, where 
$$ 
e_0 = \gamma c_0 +  d_0 \hskip.1cm,\hskip.1cm e_1 = \gamma c_1 +  d_1 = 0  \hskip.1cm,\hskip.1cm e_2 = \gamma\frac{ \left(c_1\right)^2}{c_0}+ \frac{\left(d_1\right)^2}{d_0} \hskip.1cm,\hskip.1cm e_3 = \gamma\frac{ \left(c_1\right)^3}{\left(c_0\right)^2}+ \frac{\left(d_1\right)^3}{\left(d_0\right)^2} \hskip.1cm,
$$
$$ 
\tilde{e}_1 = \gamma^2\frac{c_1}{c_0} + \frac{d_1}{d_0} \hskip.1cm,\hskip.1cm \tilde{e}_3 = \gamma^2\left(\frac{c_1}{c_0}\right)^3 +  \left(\frac{d_1}{d_0}\right)^3 \hskip.1cm.
$$ 
Using $\int_0^1 f f_1 dv = 0$ when we integrate $f_2''+\pi^2 f_2 = 2\pi f_1$, we have that
\begin{equation} \label{eqestl2barTheta} 
\int_{I^+ \cup I^-}\overline{\Theta}^2 dl_{g_{\eps}} = \frac{\left(e_0\right)^2}{2} + 2 e_0 e_2 \int_0^1 f_2 f  + O\left(\eps^{\frac{3}{2}-\frac{3\alpha}{2}} \right) 
\end{equation}
\begin{equation} \label{eqestl2barThetav} 
\int_{I^+ \cup I^-}\overline{\Theta_v}^2 dl_{g_{\eps}} = \pi^2 \left( \frac{ \left(e_0\right)^2}{2} + 2 e_0 e_2 \int_0^1 f_2 f \right) + O\left(\eps^{\frac{3}{2}-\frac{3\alpha}{2}} \right) \hskip.1cm,
\end{equation}
so that, \eqref{eqestnumerator} becomes
\begin{equation}\label{eqestnumerator2} 
A \leq \frac{t_{\eps}}{\left(\ln r\right)^2} \left( - 2\pi \tilde{e}_1\frac{ \left(e_0\right)^2}{2} + O\left( \tilde{e}_1 e_0 e_2 \right) + O\left( \eps^{\frac{3}{2}-\frac{3\alpha}{2}} \right) \right) = O\left(\eps^{\frac{3\alpha}{2}+\frac{1}{2}}\right) \hskip.1cm.
 \end{equation}
By the integral formula $-2\int_{0}^1 f f_2 = \int_0^1 \left(f_1\right)^2 \neq 0 $ and using \eqref{eqestl2barTheta}, the estimate \eqref{eqestdenominator} on $B$ becomes
\begin{equation} \label{eqestdenominator2} 
B = \gamma^2 +1 - \left(e_0\right)^2 + 2 e_0 e_2 \int_0^1 \left(f_1\right)^2 + o\left(\eps^{1-\alpha} \right) \hskip.1cm.
\end{equation}
We recall that 
$$
e_0 = d_0\left(1 - \frac{d_1 c_0}{c_1 d_0} \right) \hskip.1cm,\hskip.1cm \tilde{e}_1 = \frac{d_1}{d_0}\left( 1+ \frac{d_1 d_0 }{c_1 c_0} \right) \hskip.1cm,\hskip.1cm e_2 = \frac{\left(d_1\right)^2}{d_0}\left( 1- \frac{c_1 d_0 }{d_1 c_0} \right) \hskip.1cm.
$$
From \eqref{eqestdenominator2}, we have the following lower bound on $B$ 
\begin{eqnarray*} 
B & = & 1 - 2\left(d_0\right)^2 + \left(\frac{\gamma}{\sqrt{2}}-d_0\right)^2 + 2 e_0 e_2 \int_0^1 \left(f_1\right)^2 +  o\left(\eps^{1-\alpha}\right) \\
 &\geq & 1 - 2\left(d_0\right)^2 + 2\int_0^1 \left(f_1\right)^2\left(d_1\right)^2\left(1-\frac{d_1 c_0}{c_1 d_0}\right) \left( 1- \frac{c_1 d_0 }{d_1 c_0} \right) + o\left(\eps^{1-\alpha}\right)  
\end{eqnarray*}
as $\eps\to 0$. 
Now, using that $u_{\eps}^1$ and $u_{\eps}^l$ are orthogonal in $L^2(\partial\Sigma_{\eps})$, we get
\begin{equation} \label{eqorthogonalustar}
\sqrt{1 - \left(c_0\right)^2}\sqrt{1 - \left(d_0\right)^2} \int_{\partial \Sigma} \frac{u_{\eps}^1}{\sqrt{N_l}} \frac{u_{\eps}^l}{\sqrt{N_2}} dl_g + O(\eps^2) = - 2 \int_{0}^1 \overline{\theta_1} \overline{\theta_l} dv   
\end{equation}
so that using again the asymptotic expansion of $\overline{\theta_1}$ and $\overline{\theta_l}$ in $L^2$, we have that
\begin{equation} \label{eqorthogonalustar2}
\sqrt{1 - \left(c_0\right)^2}\sqrt{1 - \left(d_0\right)^2} \int_{\partial \Sigma} \frac{u_{\eps}^1}{\sqrt{N_1}} \frac{u_{\eps}^l}{\sqrt{N_2}} dl_g  = - c_0 d_0 + o(1)
\end{equation}
as $\eps \to 0$. 
We recall that we chose $t_{\eps}$ such that $\left(c_0\right)^2 = M_1 = \frac{1}{2}$ and since $u_{\star}^1$ and $u_{\star}^2$ have a unit $L^2$-norm, we have that
$$ 2 d_0^2 \leq  1 + o(1) \text{ as } \eps \to 0 \hskip.1cm,$$
so that $B$ converges to $0$ if and only if $\left(d_0\right)^2 \to \frac{1}{2}$. 
If $\left(d_0\right)^2$ does not converge to $\frac{1}{2}$, then, 
since by \eqref{eqestnumerator2}, $A=O\left(\eps^{\frac{3\alpha}{2}+\frac{1}{2}}\right)$ and $B$ is bounded from below by a positive constant, we get the theorem.\footnote{In fact, it is not hard to show this case does not occur.}

We assume now that $\left(d_0\right)^2 \to \frac{1}{2}$ as $\eps\to 0$. Thanks to \eqref{eqestonc_1c_0} and \eqref{eqestond_1d_0}, and since $\left(c_0\right)^2 = \frac{1}{2}$ and $\left(d_0\right)^2 = \frac{1}{2} +o(1)$, passing to the limit on \eqref{eqorthogonalustar2} gives, 
\begin{equation} \label{eqintu1staru2star} 
\int_{\partial \Sigma} u_{\star}^1u_{\star}^l dl_g = - 1  \hskip.1cm,
\end{equation} 
where $u_{\star}^i$ is the strong limit in $L^2(\partial\Sigma)$ of $\frac{u_{\eps}^i}{\sqrt{N_i}}$ for $i=1,l$. 
By \eqref{eqintu1staru2star}, since $u_{\star}^1$ and $u_{\star}^l$ have unit norm, then $u^{1}_{\star} = - u_{\star}^l$, so that $\gamma = -\frac{d_1}{c_1} = 1 + o\left( 1 \right)$ as $\eps\to 0$. 
We finally get the following lower bound on $B$ given by
\begin{equation} \label{eqlastonB} 
B  \geq  1 - 2\left(d_0\right)^2 + 8\int_0^1 \left(f_1\right)^2 \left(d_1\right)^2 +  o\left(\eps^{1-\alpha}\right)  
\end{equation}
as $\eps\to 0$. 
At the same time, we have the following estimate on $A$ given by
\begin{equation} \label{eqlastonA} 
A \leq \frac{t_{\eps}}{\left(\ln r\right)^2} \left( 4\pi  \frac{d_1}{d_0}\left( 1+ \frac{d_1 d_0 }{c_1 c_0}  \right) +O\left(\eps^{\frac{3}{2} -\frac{3\alpha}{2} } \right) \right) \leq C \eps^{2\alpha} \frac{d_1}{d_0} \left( 1+ \frac{d_1 d_0 }{c_1 c_0}  \right) 
 \end{equation}
 as $\eps \to 0$, for some constant $C$ independent from $\eps$. 
 Thanks to \eqref{eqestonc_1c_0} and \eqref{eqestond_1d_0}, since we know that $u_{\star}(p_0):= u^{1}_{\star}(p_0) = - u_{\star}^2(p_0)$, we have that
$$ 
c_1 + d_1 = \eps^{\frac{1-\alpha}{2}}\frac{u_{\star}(p_0)}{\sqrt{t_{\eps}}} \left( \sqrt{1-(c_0)^2 } -  \sqrt{1-(d_0)^2 } \right) +O\left(\eps^{1 - \frac{\alpha}{2}} \right)  
$$
as $\eps\to 0$, so that since $\left(c_0\right)^2 = \frac{1}{2}$ and $d_0 - c_0 = o(1)$, we also have that
$$ 
c_1 + d_1 = O\left( \eps^{\frac{1-\alpha}{2}} \left\vert c_0 - d_0\right\vert \right) + O\left(\eps^{1-\frac{\alpha}{2}}\right) 
$$
as $\eps\to 0$. 
By the previous formula and since $d_1 = - c_1 + o(c_1)$ is of order $\eps^{\frac{1-\alpha}{2}}$, this impliest
\begin{equation} \label{eqc_1d_1c_0d_0} 
\frac{d_1}{d_0}\left( 1+ \frac{d_1 d_0 }{c_1 c_0}  \right) = O\left( \eps^{\frac{1-\alpha}{2}} \left\vert c_0 - d_0\right\vert \right) + O\left(\eps^{1-\frac{\alpha}{2}}\right) \hskip.1cm. 
\end{equation}
Gathering \eqref{eqlastonB}, \eqref{eqlastonA}, \eqref{eqc_1d_1c_0d_0}, and knowing that $\sigma_{\eps}^2 - \sigma_{\eps}^1 = O\left(\eps^{\frac{3\alpha}{2}+\frac{1}{2}}\right)$ as $\eps\to 0$, the inequality \eqref{eqestsigmastar} simply becomes
$$ \sigma_{\star} \leq \sigma_{\eps}^1 + O\left(\eps^{\frac{3\alpha}{2}+\frac{1}{2}}\right) + O\left( \eps^{\frac{5\alpha}{2}} \right)$$
as $\eps\to 0$. 
Choosing $\alpha$ such that $\frac{2}{5}<\alpha<\frac{1}{2}$, we get that $\sigma_{\star} - \sigma_{\eps}^1 = o(\eps)$ as $\eps\to 0$, and we can conclude \cref{thm_glue}.


\bibliographystyle{alpha}
\bibliography{mybibfile}

\begin{thebibliography}{GWZ}
%
\bibitem[Ann87]{anne}
C.~Ann{\'e}, {\em Spectre du laplacien et {\'e}crasement d'anses}, Ann.\ Sci.\ {\'E}cole Norm.\ Sup. (4) {\bf 20}, 1987, 271--280.
%
%
\bibitem[CFS20]{CFS}
A.~Carlotto, G.~Franz, M.~B. Schulz {\em Free boundary minimal surfaces with connected boundary and arbitrary genus}, arXiv preprint 2020, arXiv:2001.04920, 16pp.
%
\bibitem[CES03]{ces}
B.~Colbois, A.~El Soufi,
{\em Extremal eigenvalues of the {L}aplacian in a conformal class of metrics: the `conformal spectrum'},
Ann.\ Global Anal.\ Geom.\, {\bf 24}, 2003, no.4, 337--349.
%
\bibitem[Cou40]{courant}
R.~Courant, 
{\em The existence of minimal surfaces of given topological structure under prescribed boundary conditions}, Acta Math. {\bf 72}, 1940, 51–-98.
%
\bibitem[ESI00]{Ilias_ElSoufi}
A.~El Soufi, S.~Ilias, {\em Riemannian manifolds admitting isometric immersions by their first eigenfunctions}, Pacific J.\ Math. {\bf 195}, 2000, 91--99.
\bibitem[FS16]{fs}
A.~Fraser, R.~Schoen, {\em Sharp eigenvalue bounds and minimal surfaces in the ball}, Invent.\ Math. {\bf 203}, 2016, 823--890.
\bibitem[FS19]{fs2}
A.~Fraser, R.~Schoen, {\em Some results on higher eigenvalue optimization}, arXiv preprint 2019, arXiv:1910.03547, 25pp.
%
\bibitem[FPZ17]{FPZ}
A.~Folha, F.~Pacard, T.~Zolotareva,
{\em Free boundary minimal surfaces in the unit 3-ball},
Manuscripta Math. {\bf 154}, 2017, 359--409.
%
\bibitem[GL20]{GL}
A.~Girouard, J.~Lagac\'e, {\em Large Steklov eigenvalues via homogenisation on manifolds},
arXiv preprint 2020, arXiv:2004.04044, 30pp.
\bibitem[Hil85]{Hil}
S.~Hildebrandt, {\em Free boundary problems for minimal surfaces and related questions}, Comm. Pure Appl. Math. {\bf 39} 1986, no. S, suppl., S111–-S138, Frontiers of the mathematical sciences: 1985 (New York, 1985). 
\bibitem[KL17]{kl}
N.~Kapouleas, M.M-C.~ Li,
{\em Free Boundary Minimal Surfaces in the Unit Three-Ball via Desingularization of the Critical Catenoid and the Equatorial Disk},
arXiv preprint 2017, arXiv:1709.08556, 45 pp.
%
\bibitem[KW17]{kw}
N.~Kapouleas, D.~Wiygul,
{\em Free-boundary minimal surfaces with connected boundary in the $3$-ball by tripling the equatorial disc},
arXiv preprint 2017, arXiv:1711.00818, 33 pp.
%
\bibitem[KKP14]{KKP}
M.A. Karpukhin, G. Kokarev, I. Polterovich, {\em Multiplicity bounds for {S}teklov eigenvalues on {R}iemannian surfaces}, Ann. Inst. Fourier (Grenoble), Universit\'e de Grenoble. Annales de l'Institut Fourier, {\bf 64}, 2014, 6, 2481--2502.
\bibitem[KS20]{KS}
M.~Karpukhin, D.~L. Stern {\em Min-max harmonic maps and a new characterization of conformal eigenvalues},
arXiv preprint 2020, arXiv:2004.04086, 59 pp.
\bibitem[Ke17]{ketover_1}
D.~Ketover,
\emph{Free boundary minimal surfaces of unbounded genus},
arXiv preprint 2016, arXiv:1612.08691, 32 pp.
%
\bibitem[Ke17a]{ketover_2}
D.~Ketover,
\emph{Equivariant min-max theory.}
arXiv preprint 2016, arXiv:1612.08692, 42 pp.
%
\bibitem[Kok14]{kokarev}
G.~Kokarev, {\em Variational aspects of {L}aplace eigenvalues on {R}iemannian surfaces}, Adv.\ Math.\ {\bf 258}, 2014, 191--239.
%
\bibitem[Li20]{li}
M.~M-C. Li, {\em Free boundary minimal surfaces in the unit ball: recent advances and open questions}, to appear in Proceedings of the first annual meeting of the ICCM, 2020.
%
\bibitem[MS17]{MS1}
H.~Matthiesen, A.~Siffert,
{\em Existence of metrics maximizing the first eigenvalue on non-orientable surfaces}, to appear in Journal of Spectral Theory, 2017, 14pp.
%
\bibitem[MS19]{MS2}
H.~Matthiesen, A.~Siffert,
{\em Sharp asymptotics for the first eigenvalue on some degenerating surfaces}, to appear in Trans. Amer. Math. Soc, 2019, 35pp.
%
\bibitem[MS19a]{MS3}
H.~Matthiesen, A.~Siffert,
{\em Handle attachement and the normalized first eigenvalue}, arXiv preprints 2019, arXiv:1909.03105v2, 65pp.
%
\bibitem[Nad96]{nadirashvili}
N.~Nadirashvili, {\em Berger's isoperimetric problem and minimal immersions of surfaces}, Geom.\ Func.\ Anal.\ {\bf 6}, 1996, 877--897.
%
\bibitem[NT08]{NT}
S. A.~Nazarov, J.~Taskinen {\em On the spectrum of the Steklov problem in a domain with a peak}, Vestnik St. Petersburg University: Mathematics volume {\bf 41}, 2008, 45–-52.
%
\bibitem[Pet14]{petrides}
R.~Petrides, {\em Existence and regularity of maximal metrics for the first Laplace eigenvalue on surfaces}, Geom.\ Funct.\ Anal.\ {\bf24}, 2014, 1336--1376.
%
\bibitem[Pet18]{petrides-2}
R.~Petrides, {\em  On the existence of metrics which maximize Laplace eigenvalues on surfaces}, Int.\ Math.\ Res.\ Not. , {\bf14}, 2018, 4261--4355.
%
\bibitem[Pet19]{petrides-3}
R.~Petrides, {\em  Maximizing {S}teklov eigenvalues on surfaces}, J. Differential Geom.
Volume {\bf 113},  2019, no.1, 95--188.
%
\bibitem[Rob11]{robin}
N.~Robin,
\emph{Regularity of solutions of linear second order elliptic and parabolic boundary value problems on {L}ipschitz domains},
J.\ Differential Equations {\bf 251}, 2011, 860--880.
%
\end{thebibliography}

\nocite{*}

\end{document}